\documentclass[a4paper,11pt]{article}

\usepackage{amsmath,amsthm,amssymb}
\usepackage{graphicx,subcaption,tikz,tkz-graph}
\usepackage{xspace,paralist}							                              
\usepackage[shortlabels]{enumitem}	
\usepackage{hyperref}       
\hypersetup{
    colorlinks=true,
    citecolor = blue,
    linkcolor=black,
    filecolor=magenta,
    urlcolor=cyan,
    bookmarks=true,
}

\newtheorem{theorem}{Theorem}
\newtheorem{proposition}{Proposition}
\newtheorem{lemma}{Lemma}
\newtheorem{claim}{Claim}
\newtheorem{claimx}{Claim}

\newcommand\emitem[1]{\item{\itshape #1}\\}
	
\newcommand \e {\hfill {\tiny $\blacksquare$}}
\newcommand{\NP}{{\sf NP}}
\renewcommand{\P}{{\sf P}}
\newcommand{\col}[1]{\textsc{$#1$-Colouring\xspace}}
\newcommand{\cn}{\textsc{Colouring}\xspace}
\newcommand{\lc}{\textsc{List Colouring}\xspace}
\DeclareMathOperator{\cw}{cw}

\pagestyle{plain}
\setlist[1]{itemsep=1em}

\oddsidemargin=0.5cm
\evensidemargin=0.5cm
\textwidth=15.1cm
\textheight=22.3cm
\topmargin=-1cm

\title{{\bf Colouring Square-Free Graphs\\ without Long Induced Paths}\thanks{An extended abstract of this paper appeared in the proceedings of STACS 2018~\cite{GHP18}.}}

\author{
Serge Gaspers\thanks{School of Computer Science and Engineering, UNSW Sydney, Sydney 2052, Australia.} \thanks{Decision Sciences Group, Data61, CSIRO, Sydney 2052, Australia.}
\and
Shenwei Huang\thanks{Department of Mathematics, Wilfrid Laurier University, Waterloo N2L3C5, Canada.}
\and
Dani\"{e}l Paulusma\thanks{Department of Computer Science, Durham University, Durham DH13LE, UK.}
}

\date{May 17, 2018}

\begin{document}

\maketitle

\begin{abstract} The complexity of {\sc Colouring} is fully understood for $H$-free graphs, but there are still major complexity gaps if two induced subgraphs $H_1$ and $H_2$ are forbidden. 
Let $H_1$ be the $s$-vertex cycle $C_s$ and $H_2$ be the $t$-vertex path $P_t$.  
We show that {\sc Colouring} is polynomial-time solvable for $s=4$ and $t\leq 6$, strengthening several known results. 
Our main approach is to initiate a study into the boundedness of the clique-width of atoms (graphs with no clique cutset) of a hereditary graph class. We first show that the classifications of boundedness of clique-width of $H$-free graphs and $H$-free atoms coincide. We then show that this is not the case if two graphs are forbidden: we prove that $(C_4,P_6)$-free atoms have clique-width at most~18. Our key proof ingredients are a divide-and-conquer approach for bounding the clique-width of a subclass of $C_4$-free graphs
and the construction of a new bound on the clique-width for (general) graphs in terms of the clique-width of recursively defined 
subgraphs induced by homogeneous pairs and triples of sets.
As a complementary result we prove that {\sc Colouring} is \NP-complete for $s=4$ and $t\geq 9$, 
which is the first hardness result on {\sc Colouring} for $(C_4,P_t)$-free graphs.
Combining our new results with known results leads to an almost complete dichotomy for \cn restricted to $(C_s,P_t)$-free graphs. 
\end{abstract}

\section{Introduction}

Graph colouring has been a popular and extensively studied concept in computer science and mathematics 
since its introduction as a map colouring problem more than 150 years ago due to its many application areas 
crossing disciplinary boundaries and to its use as a benchmark problem in research into computational hardness.
The corresponding decision problem, {\sc Colouring}, is to decide, for a given graph $G$ and integer~$k$, 
if $G$ admits a {\it $k$-colouring}, that is, a mapping $c:V(G)\to \{1,\dots,k\}$ such that $c(u)\neq c(v)$ whenever $uv\in E(G)$. 
Unless $\P=\NP$, it is not possible to solve {\sc Colouring} in polynomial time for general graphs, 
not even if the number of colours is limited to 3~\cite{Lo73}.
To get a better understanding of the borderline between tractable and intractable instances of {\sc Colouring}, it is natural
to restrict the input to some special graph class.
Hereditary graph classes, which are classes of graphs closed under vertex deletion, 
provide a unified framework for a large collection of well-known graph classes.
It is readily seen that a graph class is hereditary if and only if it can be characterized by a unique set 
${\cal H}$ of minimal forbidden induced subgraphs. Graphs with no induced subgraph isomorphic to a graph in a set 
${\cal H}$ are called {\it ${\cal H}$-free}.  

Over the years, the study of \cn for hereditary graph classes  
has evolved into a deep 
area of research in theoretical computer science and discrete mathematics (see, for example,~\cite{BLS99, Golu04, JT95, MR02}).
One of the best-known results 
is the classical result of  Gr\"otschel, Lov\'asz, and Schrijver~\cite{GLS84},
who showed that {\sc Colouring} is polynomial-time solvable for perfect graphs. 
Faster, even linear-time, algorithms are known for subclasses of perfect graphs, such as chordal graphs,
bipartite graphs, interval graphs, and comparability graphs; see for example~\cite{Golu04}.
All these classes are characterized by {\it infinitely} many minimal forbidden induced subgraphs.

Kr\'al', Kratochv\'{\i}l, Tuza, and Woeginger~\cite{KKTW01} initiated a systematic study into the computational complexity of 
\cn restricted to hereditary graph classes characterized by a {\it finite} number of minimal forbidden induced subgraphs. In particular
they gave a complete classification of the complexity of \cn for the case where ${\cal H}$ consists of a single graph~$H$. 

\begin{theorem}[\cite{KKTW01}]\label{t-dicho}
If $H$ is an induced subgraph of $P_4$ or of $P_1+ P_3$, then
{\sc colouring} restricted to $H$-free graphs is polynomial-time solvable, otherwise it is \NP-complete. 
\end{theorem}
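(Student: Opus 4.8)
The plan is to prove this dichotomy by splitting into a polynomial-time part and an \NP-completeness part, according to the structure of $H$.

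\medskip

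\noindent\textbf{The tractable cases.}
Up to isomorphism, the induced subgraphs of $P_4$ are $P_1$, $P_2$, $P_3$, $P_1+P_1$, $P_1+P_2$, and $P_4$ itself, and the induced subgraphs of $P_1+P_3$ that are not already induced subgraphs of $P_4$ are $P_1+P_1+P_1$ and $P_1+P_3$. Since being $H$-free is monotone under taking induced subgraphs of $H$ (if $H'$ is an induced subgraph of $H$ then every $H'$-free graph is $H$-free), it suffices to exhibit polynomial-time algorithms for the two maximal cases $H=P_4$ and $H=P_1+P_3$. For $H=P_4$, the $P_4$-free graphs are exactly the cographs, which are perfect (indeed they have clique-width at most~$2$), so {\sc Colouring} is polynomial-time solvable by the result of Gr\"otschel, Lov\'asz, and Schrijver~\cite{GLS84}; alternatively one can colour a cograph directly using its cotree, since $\chi(G_1 \cup G_2)=\max\{\chi(G_1),\chi(G_2)\}$ for disjoint union and $\chi(G_1 + G_2)=\chi(G_1)+\chi(G_2)$ for join. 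For $H=P_1+P_3$, I would give a direct combinatorial argument: in a $(P_1+P_3)$-free graph, the non-neighbourhood of any vertex induces a $P_3$-free graph, i.e. a disjoint union of cliques, which constrains the structure enough that for each fixed $k$ one can test $k$-colourability in polynomial time (and for large $k$ the answer is trivially yes since $\chi(G)\le n$), and combine the two regimes; alternatively, appeal to a known bound on clique-width of $(P_1+P_3)$-free graphs. The routine part here is verifying the structure of the non-neighbourhoods and bounding the number of relevant colours.

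\medskip

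\noindent\textbf{The hard cases.}
For the remaining graphs $H$ we must show {\sc Colouring} is \NP-complete on $H$-free graphs. A graph $H$ is \emph{not} an induced subgraph of $P_4$ nor of $P_1+P_3$ precisely when $H$ either contains a cycle, contains a vertex of degree at least~$3$, contains $P_5$ as an induced subgraph, or contains $2P_2$ ($=P_2+P_2$) as an induced subgraph. The key observation is that {\sc Colouring} (even $3$-{\sc Colouring}) is \NP-complete on graphs of girth at least~$g$ for every fixed~$g$, and on graphs of maximum degree at most~$4$, and on line graphs, etc.; more usefully, for any such $H$ the class of $H$-free graphs contains a class on which {\sc Colouring} is already known to be \NP-complete. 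Concretely: if $H$ has a cycle, say of length $\ell$, then graphs of girth greater than~$\ell$ are $H$-free, and {\sc Colouring} is \NP-complete on graphs of arbitrarily large girth; if $H$ has a vertex of degree $\ge 3$, then graphs of maximum degree~$\le 2$ won't suffice, so instead note $K_4$ is $H$-free only if... — here one uses that {\sc Colouring} restricted to graphs with no $K_{1,3}$... . The cleanest route, which I would take, is: reduce to the cases $H$ linear forest (no vertex of degree $\ge 3$, acyclic) versus otherwise, and for $H$ with a vertex of degree $\ge 3$ or a cycle, observe that subcubic graphs of large girth are $H$-free and {\sc Colouring} is \NP-complete there~\cite{Lo73}; for $H$ a linear forest not contained in $P_4$ or $P_1+P_3$, $H$ contains $P_5$ or $2P_2$ as an induced subgraph, and then $3$-{\sc Colouring} is \NP-complete on $P_5$-free graphs and on $2P_2$-free graphs (the latter containing all complements of bipartite graphs, where colouring is hard, or via known results on $(C_3,\dots)$-free classes). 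I would cite the appropriate hardness results rather than reprove them.

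\medskip

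\noindent The main obstacle is the bookkeeping in the hardness direction: one must check that \emph{every} graph $H$ outside the two tractable templates falls into one of finitely many families for which a known \NP-hardness result applies, with no gaps. This is a finite but slightly delicate case analysis on the ``shape'' of $H$ (cycle / high-degree vertex / long path / two disjoint edges), and the art is choosing the hard subclasses so that their union covers all such $H$; I expect to lean on the classical result that $k$-{\sc Colouring} is \NP-complete for $k\ge 3$ together with a handful of strengthenings to restricted graph classes, exactly as carried out in~\cite{KKTW01}.
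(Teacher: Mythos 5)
The paper does not prove \autoref{t-dicho} at all; it is quoted from~\cite{KKTW01}, so your proposal can only be judged on its own merits, and as written it has genuine gaps on both sides of the dichotomy. On the tractable side, the case $H=P_4$ is fine, but your treatment of $H=P_1+P_3$ does not go through. Testing $k$-colourability ``for each fixed $k$'' plus the observation that $k\geq n$ is trivial does not solve \cn, where $k$ is part of the input: the intermediate range of $k$ is not covered, and fixed-$k$ algorithms may have $k$ in the exponent. Your fallback, ``a known bound on the clique-width of $(P_1+P_3)$-free graphs,'' is false: by the classification the paper itself invokes (see the discussion around \autoref{p-p4} and~\cite{DP16}), $H$-free graphs have bounded clique-width only when $H$ is an induced subgraph of $P_4$, and $P_1+P_3$ is not (its independence number is $3$). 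The actual argument needs a structural step you do not supply: the complement of a $(P_1+P_3)$-free graph is paw-free, hence (Olariu) every component of the complement is triangle-free or complete multipartite, and $\chi(G)$ equals the clique cover number of the complement, computable componentwise via maximum matching.

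On the hardness side your case analysis is incomplete and two of the proposed hard subclasses are wrong. First, a graph $H$ outside the two templates need not contain a cycle, a degree-$3$ vertex, an induced $P_5$ or an induced $2P_2$: the linear forests $4P_1$ and $2P_1+P_2$ are counterexamples, and they are exactly the extra minimal obstructions (the paper's list of hardness sources is $\{4P_1,\,2P_1+P_2,\,2P_2\}$); for them you need separate reductions, e.g.\ for $4P_1$-free graphs (independence number at most~$3$) via partition into triangles. Second, for a forest $H$ with a vertex of degree at least~$3$, ``subcubic graphs of large girth are $H$-free'' fails already for $H=K_{1,3}$: any triangle-free graph with a degree-$3$ vertex contains an induced claw. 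Such $H$ contains an induced claw, so the correct hard subclass is the claw-free graphs, using \NP-hardness of determining the chromatic index of line graphs~\cite{Ho81,LG83}. Third, your justification for the $2P_2$-free case is incorrect twice over: not every complement of a bipartite graph is $2P_2$-free (that would force all bipartite graphs to be $C_4$-free), and \cn on co-bipartite graphs is polynomial (clique cover of a bipartite graph via K\H{o}nig/matching), so that class cannot witness hardness; one must instead use the reduction of~\cite{KKTW01} for $2P_2$-free graphs. Until these cases are filled in with correct hard subclasses, the ``otherwise \NP-complete'' half of the statement is not established.
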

\autoref{t-dicho} led to two natural directions for further research:
\begin{enumerate}
\item Is it possible to obtain a dichotomy for {\sc Colouring} on $H$-free graphs if the number of colours $k$ is fixed 
(that is, $k$ no longer belongs to the input)?\\[-8mm] 
\item Is it possible to obtain a dichotomy for {\sc Colouring} on $\mathcal{H}$-free graphs if ${\cal H}$ has size~2?
\end{enumerate}
We briefly discuss known results for both directions below and refer to~\cite{GJPS17} for a detailed survey. 
Let $C_s$ and $P_t$ denote the cycle on $s$ vertices and path on $t$ vertices, respectively. 
We start with the first question. If $k$ is fixed, then
we denote the problem by $k$-{\sc Colouring}. 
It is known that for every 
$k\geq 3$, the {\sc $k$-Colouring} problem on $H$-free graphs is \NP-complete whenever $H$ contains a cycle~\cite{EHK98} or 
an induced claw~\cite{Ho81,LG83}. Therefore, only the case when $H$ is a disjoint union of paths remains.
In particular, the situation where $H=P_t$ has been thoroughly studied.
On the positive side, $3$-{\sc Colouring} on $P_7$-free graphs~\cite{BCMSZ}, $4$-{\sc Colouring} on $P_6$-free 
graphs~\cite{CSZ,CSZb} and 
$k$-{\sc Colouring} on $P_5$-free graphs for any $k\ge 1$~\cite{HKLSS10} are polynomial-time solvable.
On the negative side, Huang~\cite{Hu16} proved \NP-completeness for ($k=5$, $t=6$) and for ($k=4$, $t=7$). The case
($k=3$, $t\geq 8$) remains open, although some partial results are known~\cite{CS}.

In this paper we focus on the second question, that is, we restrict the input of {\sc Colouring} to $\mathcal{H}$-free graphs for 
${\cal H}=\{H_1,H_2\}$. 
For two graphs $G$ and $H$, we write $G+H=(V(G)\cup V(H),E(G)\cup E(H))$ for the disjoint union of two vertex-disjoint graphs $G$ and $H$, and $rG$ for the disjoint union of $r$ copies of $G$.
As a starting point, Kr\'al', Kratochv\'{\i}l, Tuza, and Woeginger~\cite{KKTW01} 
identified the following three main sources of \NP-completeness:
\begin{itemize}
\item both $H_1$ and $H_2$ contain a claw;\\[-8mm] 
\item both $H_1$ and $H_2$ contain a cycle; and\\[-8mm] 
\item both $H_1$ and $H_2$ contain an induced subgraph from the set $\{4P_1,2P_1+P_2,2P_2\}$.
\end{itemize}
They also showed additional \NP-completeness results by mixing the three types.
Since then numerous 
papers~\cite{BDJP16,BGPS12a,CH,DDP17,DGP14,DP18,HH17,HL,Hu16,KMP,KKTW01,LM15,Ma13,Ma,ML17,Sc05}
have been devoted to this problem,  but despite all these efforts the complexity classification for {\sc Colouring} on $(H_1,H_2)$-free graphs 
is still far from complete, and even dealing with specific pairs $(H_1,H_2)$ may require substantial work.

One of the ``mixed''
results obtained in~\cite{KKTW01} is that
{\sc Colouring} is \NP-complete for $(C_s,H)$-free graphs when $s\geq 5$ and $H\in \{4P_1,2P_1+P_2,2P_2\}$. 
This, together with the well-known result that {\sc Colouring} can be solved in linear time for $P_4$-free graphs 
(see also \autoref{t-dicho}) implies the following dichotomy.

\begin{theorem}[\cite{KKTW01}]\label{t-5}
Let $s\geq 5$ be a fixed integer. Then {\sc Colouring} for $(C_s,P_t)$-free graphs 
is polynomial-time solvable when $t\leq 4$ and \NP-complete when $t\geq 5$.
\end{theorem}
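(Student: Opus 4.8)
The plan is to derive both directions of the dichotomy by combining the quoted \NP-completeness result for $(C_s,H)$-free graphs with $H\in\{4P_1,2P_1+P_2,2P_2\}$ from~\cite{KKTW01} with the classification of \autoref{t-dicho}, together with a short bookkeeping of induced-subgraph containments. No new algorithm or reduction needs to be constructed.

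For the tractable side, suppose $t\le 4$. Then $P_t$ is an induced subgraph of $P_4$, so every $P_t$-free graph is $P_4$-free, and in particular every $(C_s,P_t)$-free graph is $P_4$-free. Applying \autoref{t-dicho} with $H=P_4$ shows that {\sc Colouring} is polynomial-time solvable on $P_4$-free graphs, hence on the subclass of $(C_s,P_t)$-free graphs. (Alternatively one may invoke the classical linear-time algorithm for $P_4$-free graphs.)

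For the hard side, suppose $t\ge 5$. The key point is that $P_t$ contains $2P_2$ as an induced subgraph: in the path $v_1v_2\cdots v_t$ the set $\{v_1,v_2,v_4,v_5\}$ induces exactly $2P_2$, since the only edges among these vertices are $v_1v_2$ and $v_4v_5$. Hence every $2P_2$-free graph is $P_t$-free, so the class of $(C_s,2P_2)$-free graphs is contained in the class of $(C_s,P_t)$-free graphs for all $t\ge5$. Since {\sc Colouring} is \NP-complete on $(C_s,2P_2)$-free graphs by~\cite{KKTW01}, the graphs produced by the underlying reduction already lie in the larger class, and therefore {\sc Colouring} is \NP-complete on $(C_s,P_t)$-free graphs as well.

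The only place where care is needed is the direction of these class inclusions: forbidding a smaller graph gives a smaller class, so hardness propagates \emph{upward} from $2P_2$-free graphs to $P_t$-free graphs, not downward. The substantive work — constructing $(C_s,2P_2)$-free instances that encode an \NP-hard colouring problem — has already been done in~\cite{KKTW01}, so I do not anticipate any real obstacle beyond verifying that $P_t$ is an induced subgraph of $P_4$ when $t\le4$ and that $2P_2$ is an induced subgraph of $P_t$ when $t\ge5$.
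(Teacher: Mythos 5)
Your proposal is correct and matches the paper's own justification: the paper derives \autoref{t-5} exactly by combining the \NP-completeness of {\sc Colouring} for $(C_s,H)$-free graphs with $H\in\{4P_1,2P_1+P_2,2P_2\}$ from~\cite{KKTW01} (using that $2P_2$ is an induced subgraph of $P_t$ for $t\geq 5$) with the polynomial-time solvability on $P_4$-free graphs. Your containment checks and the direction of the inclusion argument are exactly the bookkeeping the paper leaves implicit.
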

\autoref{t-5}
raises the natural question: 
what is the complexity of {\sc Colouring} on $(C_s,P_t)$-free graphs when $s\in \{3,4\}$?

For $s=3$, Huang, Johnson and Paulusma~\cite{HJP14} proved that {\sc 4-Colouring}, and thus {\sc Colouring},
is \NP-complete for $(C_3,P_{22})$-free graphs.
A result of Brandst{\"a}dt, Klembt and Mahfud \cite{BKM06} implies that {\sc Colouring} is polynomial-time solvable
for $(C_3,P_{6})$-free graphs.

For $s=4$, it is only known  that {\sc Colouring} is polynomial-time solvable for $(C_4,P_5)$-free graphs~\cite{Ma13}. 
This is unless we fix the number of colours: for every $k\geq 1$ and $t\geq 1$, it is known that {\sc $k$-Colouring} is polynomial-time solvable for 
$(K_{r,r},P_t)$-free graphs for every $s\geq 1$, and thus for $(C_4,P_t)$-free graphs~\cite{GPS14} (take $r=2$).
The underlying reason for this is a result of Atminas, Lozin and Razgon~\cite{ALR12}, who proved that every $P_t$-free graph either has small treewidth or contains a large biclique $K_{s,s}$ as a subgraph. Then Ramsey arguments can be used but only if the number of colours~$k$ is fixed. 
The result for $s=4$ and fixed $k$ is in contrast to the result of~\cite{HJP14} that for all $k\geq 4$ and $s\geq 5$, 
there exists a constant~$t^s_k$ such that  $k$-{\sc Colouring} is \NP-complete even for
$(C_3,C_5,\ldots,C_{s},P_{t^s_k})$-free graphs. 
	
\subsection*{Our Main Results} 
We show, 
in \autoref{sec:poly}, 
that \cn is polynomial-time solvable for $(C_4,P_6)$-free graphs.
The class of $(C_4,P_6)$-free graphs generalizes the classes of split graphs (or equivalently, $(C_4,C_5,2P_2)$-free graphs)
and pseudosplit graphs (or equivalently, $(C_4,2P_2)$-free graphs).
The case of $(C_4,P_6)$-free graphs was explicitly mentioned as a natural case to consider in~\cite{GJPS17}.
Our result unifies several previous results on colouring $(C_4,P_t)$-free graphs, 
namely:
the polynomial-time solvability of \cn for $(C_4,P_5)$-free graphs \cite{Ma13};
the polynomial-time solvability of \col{k} for $(C_4,P_6)$-free graphs for every $k\geq 1$~\cite{GPS14};
and the recent $3/2$-approximation algorithm for {\sc Colouring} for $(C_4,P_6)$-free graphs~\cite{GH17}.
It also complements a recent result of Karthick and Maffray~\cite{KM} who gave
tight linear upper bounds of the chromatic number of a $(C_4,P_6)$-free graph in terms of its clique number and maximum degree that strengthen a similar bound given in~\cite{GH17}.

It was not previously known 
if there exists an integer $t$ such that \cn is \NP-complete for $(C_4,P_{t})$-free graphs.
In \autoref{sec:hard} we complement our positive result of~\autoref{sec:poly} by giving an affirmative answer to this question:
already the value $t=9$ makes the problem \NP-complete.
 
\subsection*{Our Methodology}
The general research goal of our paper is to increase, in a systematic way, 
our insights in the computational hardness of {\sc Colouring} by developing new techniques. 
In particular we aim to narrow the complexity gaps between the hard and easy cases.
Clique-width is a well-known width parameter and having bounded clique-width is often the underlying reason
for a large collection of \NP-complete problems, including {\sc Colouring}, to become polynomial-time solvable on a special graph class; 
this follows from results of~\cite{CMR00,EGW01,KR03b,OS06,Ra07}.
For this reason we want to use clique-width to solve {\sc Colouring} for $(C_4,P_6)$-free graphs, 
However, the class of $(C_4,P_6)$-free graphs has unbounded clique-width, as it contains the class of {\it split graphs}, or equivalently,
$(C_4,C_5,2P_2)$-free graphs, which may have arbitrarily large clique-width~\cite{MR99}. 

To overcome this obstacle we first preprocess the $(C_4,P_6)$-free input graph. An {\em atom} is a graph with no clique cutset.
Clique cutsets were introduced by Dirac~\cite{Di61}, who proved that every chordal graph is either complete or has a clique cutset.
Later, decomposition into atoms became a very general tool for solving combinatorial problems on chordal graphs and other hereditary graph classes, such as those that forbid some Truemper configuration~\cite{BPV17}.
For instance, {\sc Colouring} and also other problems, such as {\sc Independent Set} and {\sc Clique}, are polynomial-time solvable 
on a hereditary graph class~${\cal G}$ if they are so on the atoms of ${\cal G}$~\cite{Ta85}.
Hence, we may restrict ourselves to the subclass of $(C_4,P_6)$-free atoms in order to solve {\sc Colouring} for $(C_4,P_6)$-free graphs.

Adler et al.~\cite{ALMRTV17} proved that (diamond, even-hole)-free atoms have unbounded clique-with.
However, so far, (un)boundedness of the cliquewidth of atoms in special graph classes has not been well studied. 
It is known that a class of $H$-free graphs has bounded clique-width if and only if 
$H$ is an induced subgraph of $P_4$ (see~\cite{DP16}). 
As a start of a more systematic study, we show in~\autoref{s-atoms} that the same result holds fo atoms: a class of $H$-free atoms has bounded clique-width if and only if 
$H$ is an induced subgraph of $P_4$.
In contrast, we observe that,
although split graphs have unbounded clique-width~\cite{MR99},
split atoms are cliques~\cite{Di61} and thus have clique-width at most~2. 
Recall that split graphs are characterized by three forbidden induced subgraphs. This yields the natural question  whether one can
prove the same result for a graph class characterized by two forbidden induces subgraphs. 
In this paper we give an {\it affirmative} answer to this question by showing that the class of $(C_4,P_6)$-free atoms has bounded clique-width. 
As mentioned, this immediately yields a polynomial-time algorithm for \cn on $(C_4,P_6)$-free graphs,

In order to prove that $(C_4,P_6)$-free atoms have bounded clique-width, we further develop the approach of~\cite{GH17}
used to bound the chromatic number of $(C_4,P_6)$-free graphs as a linear function of their maximum clique size
and to obtain a $3/2$-approximation algorithm for {\sc Colouring} for $(C_4,P_6)$-free graphs. 
The approach of~\cite{GH17} is based on a decomposition theorem for $(C_4,P_6)$-free atoms.
We derive a new variant of this decomposition theorem for so-called strong atoms, 
which are atoms that contain no universal vertices and no pairs of twin vertices.
We use this decomposition to prove that  $(C_4,P_6)$-free strong atoms have bounded clique-width.
To obtain this result we also apply a divide-and-conquer approach for bounding the clique-width of a subclass of $C_4$-free graphs. 
As another novel element of our proof, we show a new bound on the clique-width for (general) graphs 
in terms of the clique-width of recursively defined subgraphs induced by homogeneous triples and pairs of sets.
Our techniques may be of independent interest and can possibly be used to prove polynomial-time solvability of \cn
on other graph classes.

\medskip
\noindent
{\bf Remark}. The {\sc Independent Set} problem is to decide if a given graph $G$ has an independent set of at least~$k$ vertices 
for some given integer~$k$. Brandst{\"a}dt and Ho\`{a}ng~\cite{BH07} proved that {\sc Independent Set} is polynomial-time solvable for
$(C_4,P_6)$-free graphs. As mentioned, just as for {\sc Colouring}, it suffices to consider only the atoms of a hereditary graph class 
in order to solve {\sc Independent Set}~\cite{Ta85}. Brandst{\"a}dt and Ho\`{a}ng followed this approach. 
Although we will use one of their structural results as lemmas, their method does not yield a polynomial-time algorithm for {\sc Colouring} on $(C_4,P_6)$-free graphs.

\section{Preliminaries}\label{sec:pre}

Let $G=(V,E)$ be a graph. 
For $S\subseteq V$, the subgraph \emph{induced} by $S$, is denoted by $G[S]=(S,\{uv\; |\; u,v\in S\})$.
The {\it complement} of $G$ is the graph $\overline{G}$ with vertex set $V$ and edge set $\{uv\; |\; uv\notin E\}$.
A clique $K\subseteq  V$ is a \emph{clique cutset} if $G-K$ has more connected components than $G$. 
If $G$ has no clique cutsets, then $G$ is called an \emph{atom}.
The {\it edge subdivision} of an edge $uv\in E$ removes $uv$ from $G$ and replaces it by a new vertex~$w$ and two new edges $uw$ and $wv$.

The \emph{neighbourhood} of a vertex $v$ is denoted by $N(v)=\{u\; |\; uv\in E\}$ and its degree by $d(v)=|N(v)|$.
For a set $X\subseteq V$, we write $N(X)=\bigcup_{v\in X}N(v)\setminus X$.
For $x\in V$ and $S\subseteq V$, we let $N_S(x)$ be the set of neighbours of $x$ that are in $S$, that is, $N_S(x)=N_G(x)\cap S$. A subset $D\subseteq V$ is a \emph{dominating set} of $G$ if every vertex not in $D$ has a neighbour in $D$.
A vertex~$u$ is \emph{universal} in $G$ if it is adjacent to all other vertices, that is, $\{u\}$ is a dominating set of $G$.

For $X,Y\subseteq V$, we say that $X$ is \emph{complete} (resp. \emph{anti-complete}) to $Y$
if every vertex in $X$ is adjacent (resp. non-adjacent) to every vertex in $Y$.
Let $u,v\in V$ be two distinct vertices. We say that a vertex $x\notin \{u,v\}$ \emph{distinguishes} $u$ and $v$
if $x$ is adjacent to exactly one of $u$ and $v$.  A set $H\subseteq V$ is a \emph{homogeneous set} 
if no vertex in $V\setminus H$ can distinguish two vertices in $H$. A homogeneous set $H$ is \emph{proper}
if $1<|H|<|V|$. A graph is \emph{prime} if it contains no proper homogeneous set.

We say that $u$ and $v$ are \emph{(true) twins} if $u$ and $v$ are adjacent
and have the same set of neighbours in $V\setminus \{u,v\}$. Note that the binary relation
of being twins is an equivalence relation on $V$, and so $V$ can be partitioned into equivalence 
classes $T_1,\ldots, T_r$ of twins. The \emph{skeleton} of $G$ is the subgraph induced by 
a set of $r$ vertices, one from each of $T_1,\ldots,T_r$.
A \emph{blow-up} of $G$ is a graph $G'$ obtained by replacing each vertex $u\in V$ by 
a clique $K_u$  of size at least~$1$, such that two distinct cliques $K_u$ and $K_v$ are complete in $G'$ if $u$ and $v$ are adjacent in $G$,
and anti-complete otherwise. Since each equivalence class of twins is a clique and any two equivalence classes
are either complete or anti-complete, every graph is a blow-up of its skeleton.

Let $\{H_1,\ldots, H_p\}$ be a set of $p$ graphs for some integer $p\geq 1$. We say that $G$ is {\it $(H_1,\ldots,H_p)$-free}
if $G$ contains no induced subgraph isomorphic to $H_i$ for some $1\leq i\leq p$. If $p=1$, we may write that $G$ is $H_1$-free instead. If $V$ can be partitioned into a clique $C$ and an independent set $I$, then $G$ is a {\it split graph}.

The \emph{clique-width} of a graph~$G$, denoted by~$\cw(G)$, 
is the minimum number of labels required to construct~$G$ using the following four operations:
\begin{itemize}
\item $i(v)$: create a new graph consisting of a single vertex $v$ with label $i$;\\[-8mm] 
\item $G_1\oplus G_2$: take the disjoint union of two labelled graphs $G_1$ and $G_2$;\\[-8mm] 
\item $\eta_{i,j}$: join each vertex with label $i$ to each vertex with label $j$ (for $i\neq j$);\\[-8mm] 
\item $\rho_{i\rightarrow j}$: rename label $i$ to $j$.
\end{itemize}
A \emph{clique-width expression} of $G$ is an algebraic expression 
that describes how $G$ can be recursively constructed using these operations.
An \emph{$\ell$-expression} of $G$ is a clique-width expression using at most $\ell$ distinct labels.
For instance, $\eta_{3,2}(3(d)\oplus \rho_{3\rightarrow 2}(\rho_{2\rightarrow 1}(\eta_{3,2}(3(c)\oplus \eta_{2,1}(2(b)\oplus 1(a))))))$
is a $3$-expression for the path on vertices $a,b,c,d$ in that order.
A class of graphs~${\cal G}$ has {\em bounded} clique-width if there is a constant~$c$ such that the clique-width 
of every graph in~${\cal G}$ is at most~$c$,and {\em unbounded} otherwise.

Clique-width is of fundamental importance in computer science, since 
every problem expressible in monadic second-order logic using
quantifiers over vertex subsets but not over edge subsets becomes polynomial-time solvable for graphs of bounded 
clique-width~\cite{CMR00}. Although this meta-theorem does not directly apply to \cn, a result of
Espelage, Gurski and Wanke~\cite{EGW01} (see also~\cite{KR03b,Ra07}) 
combined with the polynomial-time approximation algorithm of Oum and Seymour~\cite{OS06}
for finding an $\ell$-expression of a graph,
showed that \cn can be added to the list of such problems.

\begin{theorem}[\cite{EGW01,OS06}]\label{thm:kr03}
\cn can be solved in polynomial time for graphs of bounded clique-width.
\end{theorem}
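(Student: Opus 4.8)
The plan is to derive the statement by combining its two sources. First I would run the approximation algorithm of Oum and Seymour~\cite{OS06}: given $G$ with $\cw(G)\le c$, it produces in polynomial time an $\ell$-expression of $G$, where $\ell=\ell(c)$ depends only on $c$ (one may take $\ell=2^{\mathcal{O}(c)}$). From now on $\ell$ is a fixed constant and $G$ comes equipped with such an expression, which I would parse into a rooted \emph{expression tree} $T$: its leaves are the operations $i(v)$, its unary internal nodes are the operations $\eta_{i,j}$ and $\rho_{i\to j}$, and its binary internal nodes are the operations $\oplus$; because the expression is output in polynomial time, $T$ has polynomially many nodes. Each node $x$ of $T$ represents a labelled graph $G_x$ whose vertex set is partitioned into label classes $V_1^x,\dots,V_\ell^x$, with $G_x=G$ at the root.

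Second, I would decide \cn --- equivalently, whether $\chi(G)\le k$ --- by a bottom-up dynamic program on $T$, in the spirit of Espelage, Gurski and Wanke~\cite{EGW01}. The naive state at a node $x$, namely the set of colours a partial proper colouring uses on each label class $V_i^x$, takes $2^{\Theta(k\ell)}$ values, which is not polynomial because $k$ is part of the input; this is the crux of the difficulty. The fix is that colours are interchangeable, so the only relevant attribute of a colour is its \emph{pattern}: the set $P\subseteq\{1,\dots,\ell\}$ of those labels whose class currently contains a vertex of that colour. Hence I take the state at $x$ to be the \emph{profile} $(n_P)_{P\subseteq\{1,\dots,\ell\}}$, where $n_P$ is the number of colours with pattern $P$, and for each node of $T$ I would store the set of profiles realisable by some proper colouring of $G_x$. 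Since $0\le n_P\le |V(G)|$ and there are $2^\ell$ patterns, there are at most $(|V(G)|+1)^{2^\ell}$ profiles, which is polynomial for fixed $\ell$.

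The transitions are then mechanical. A leaf $i(v)$ yields the single profile with $n_{\{i\}}=1$ and every other entry $0$. For $\rho_{i\to j}$, each colour has $j$ substituted for $i$ in its pattern, a deterministic reshuffling (with merging) of the profile. For $\eta_{i,j}$, a profile survives exactly when $n_P=0$ for every $P\supseteq\{i,j\}$ --- precisely the condition that no colour is shared between $V_i^x$ and $V_j^x$, which is what the newly added edges require --- and is deleted otherwise. For a binary node $\oplus$, I would combine a profile of the left child with a profile of the right child by choosing, for each pattern $P$ of the left child and each pattern $Q$ of the right child, how many of the $P$-coloured colours on the left are identified with how many of the $Q$-coloured colours on the right; such identifications are always legal since the disjoint union adds no edges, and each produces a colour of pattern $P\cup Q$ while unmatched colours keep their patterns. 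Enumerating these choices and forming the resulting profiles takes polynomial time per pair of input profiles. At the root I would answer that $\chi(G)\le k$ precisely when some realisable profile satisfies $\sum_P n_P\le k$.

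The one genuine obstacle is the one already flagged: since $k$ is part of the input, the per-label ``set of colours'' state is exponentially large, and the remedy is to keep only the lossy ``profile of patterns'' summary. What must then be verified is that the transition rules above are correct --- that the set of profiles realisable at a node equals the image, under the appropriate rule, of the sets of profiles realisable at its children. Soundness (every profile produced by a rule arises from an explicit proper colouring) is immediate from the constructions; completeness (restricting a proper colouring of $G_x$ to the children gives realisable child profiles from which the rule recovers the profile at $x$) holds because $\eta_{i,j}$ constrains colourings only through their patterns and $\oplus$ imposes no cross constraints at all. Everything else --- bounding $|T|$, bounding the number of profiles, and checking each transition runs in polynomial time --- is routine.
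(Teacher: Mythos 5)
Your proposal is correct and matches the route the paper itself takes: the paper gives no proof of \autoref{thm:kr03}, importing it directly from~\cite{EGW01,OS06}, and your argument (Oum--Seymour to obtain an $\ell$-expression for a fixed $\ell$, then a bottom-up dynamic program over the expression tree whose states are counts of colours per label-pattern, with the filtering rule at $\eta_{i,j}$, relabelling at $\rho_{i\to j}$, and polynomially many colour-identification choices at $\oplus$) is exactly the standard argument behind those citations. No gaps: the profile is a sufficient statistic for all three operations, and for fixed $\ell$ the number of profiles and of identification choices at $\oplus$ is polynomial in $|V(G)|$, so the whole procedure runs in polynomial time.
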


\section{Atoms and Clique-Width}\label{s-atoms}

Recall that a graph is an atom if it contains no clique cutset. It is a natural question whether the clique-width of a graph class
of unbounded clique-width becomes bounded after restricting to the atoms of the class.
For classes of $H$-free graphs we note that this is not the case though. 
In order to explain this we need the notion of a wall; see \autoref{f-walls} 
for three examples (for a formal definition we refer to, for example,~\cite{Chuzhoy15}). 
A \emph{$k$-subdivided wall} is a graph obtained from a wall after subdividing each edge 
exactly~$k$ times for some constant $k\geq 0$. The following lemma is well known.

\begin{lemma}[\cite{LR06}]\label{l-walls}
For every constant $k\geq 0$, the class of $k$-subdivided walls has unbounded clique-width.
\end{lemma}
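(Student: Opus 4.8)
The plan is to reduce the statement to the classical fact that walls, like grids, have unbounded treewidth, by exploiting the tight relationship between clique-width and treewidth on graphs of bounded maximum degree. Concretely, I would invoke the result (see~\cite{LR06}, and also Gurski and Wanke) that a class of graphs of bounded maximum degree has bounded clique-width if and only if it has bounded treewidth; equivalently, there is a function $f$ such that every graph $G$ of maximum degree at most $d$ satisfies $\operatorname{tw}(G)\le f(d,\cw(G))$. Given this, it suffices to exhibit, for each fixed $k\ge 0$, a family of $k$-subdivided walls of bounded degree but unbounded treewidth, namely the $k$-subdivisions of the $r$-walls as $r\to\infty$.

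The first step is the easy degree bound: every wall has maximum degree at most $3$, and subdividing an edge introduces only vertices of degree $2$ while leaving the degrees of all original vertices unchanged. Hence every $k$-subdivided wall has maximum degree at most $3$, for every $k\ge 0$, which is exactly what lets us apply the bounded-degree equivalence with $d=3$.

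The second step is to argue that $k$-subdivided walls have unbounded treewidth. The $r$-wall has treewidth $\Theta(r)$ — walls are the canonical large-treewidth obstructions appearing in the Grid Minor Theorem — so it is enough to observe that subdivision does not decrease treewidth: the $r$-wall is obtained from its $k$-subdivision by contracting each internal chain of subdivision vertices, so it is a minor of the $k$-subdivided $r$-wall, and therefore the $k$-subdivided $r$-wall has treewidth at least that of the $r$-wall, which is unbounded in $r$.

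Combining the three ingredients yields the lemma: if the class of $k$-subdivided walls had clique-width bounded by some constant $c$, then the bounded-degree equivalence (with $d=3$) would give treewidth at most $f(3,c)$ on the whole class, contradicting the previous step. I expect the only delicate point — and the part worth stating with care — to be the precise form of the clique-width/treewidth relation for bounded-degree graphs, and the fact that it is used in the direction ``bounded clique-width $\Rightarrow$ bounded treewidth'' (the reverse implication holds for all graphs and is not what is needed here); everything else is routine. One could alternatively attempt a direct lower-bound argument on clique-width expressions in the style of the classical grid lower bound, but routing through treewidth is cleaner and shorter.
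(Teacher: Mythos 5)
Your argument is correct. Note first that the paper does not actually prove this lemma: it is stated as a known result and attributed to~\cite{LR06}, so there is no in-paper proof to match against. Your route is the standard one and is sound in every step: $k$-subdivided walls have maximum degree~$3$ (subdivision only creates degree-$2$ vertices); the $r$-wall is a minor of any of its subdivisions (contract the subdivision paths), so by minor-monotonicity the $k$-subdivided $r$-walls have treewidth growing linearly in $r$; and for classes of bounded maximum degree, bounded clique-width implies bounded treewidth --- this is the Gurski--Wanke result (a graph of clique-width $c$ with no $K_{t,t}$ subgraph has treewidth at most $3c(t-1)-1$, applied with $t=4$), and it is also the subject of~\cite{LR04} in the paper's own bibliography. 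You are right that the only delicate point is using the implication in the direction ``bounded clique-width together with bounded degree implies bounded treewidth,'' since the reverse implication ($\cw(G)\le 3\cdot 2^{\operatorname{tw}(G)-1}$) holds unconditionally and would not help here; you state this correctly. So your proposal gives a complete, self-contained justification of the cited lemma, arguably in the same spirit as the source~\cite{LR06}, and nothing in it needs repair.
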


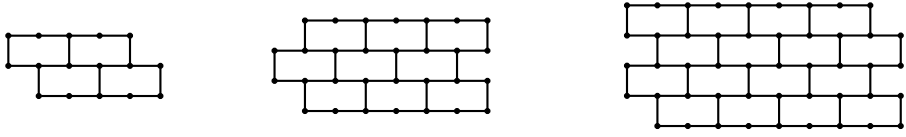
\begin{figure}
\begin{center}
\begin{minipage}{0.2\textwidth}
\centering
\begin{tikzpicture}[scale=0.4, every node/.style={scale=0.3}]
\GraphInit[vstyle=Simple]
\SetVertexSimple[MinSize=6pt]
\Vertex[x=1,y=0]{v10}
\Vertex[x=2,y=0]{v20}
\Vertex[x=3,y=0]{v30}
\Vertex[x=4,y=0]{v40}
\Vertex[x=5,y=0]{v50}

\Vertex[x=0,y=1]{v01}
\Vertex[x=1,y=1]{v11}
\Vertex[x=2,y=1]{v21}
\Vertex[x=3,y=1]{v31}
\Vertex[x=4,y=1]{v41}
\Vertex[x=5,y=1]{v51}

\Vertex[x=0,y=2]{v02}
\Vertex[x=1,y=2]{v12}
\Vertex[x=2,y=2]{v22}
\Vertex[x=3,y=2]{v32}
\Vertex[x=4,y=2]{v42}

\Edges(    v10,v20,v30,v40,v50)
\Edges(v01,v11,v21,v31,v41,v51)
\Edges(v02,v12,v22,v32,v42)

\Edge(v01)(v02)

\Edge(v10)(v11)

\Edge(v21)(v22)

\Edge(v30)(v31)

\Edge(v41)(v42)

\Edge(v50)(v51)

\end{tikzpicture}
\end{minipage}
\begin{minipage}{0.3\textwidth}
\centering
\begin{tikzpicture}[scale=0.4, every node/.style={scale=0.3}]
\GraphInit[vstyle=Simple]
\SetVertexSimple[MinSize=6pt]
\Vertex[x=1,y=0]{v10}
\Vertex[x=2,y=0]{v20}
\Vertex[x=3,y=0]{v30}
\Vertex[x=4,y=0]{v40}
\Vertex[x=5,y=0]{v50}
\Vertex[x=6,y=0]{v60}
\Vertex[x=7,y=0]{v70}

\Vertex[x=0,y=1]{v01}
\Vertex[x=1,y=1]{v11}
\Vertex[x=2,y=1]{v21}
\Vertex[x=3,y=1]{v31}
\Vertex[x=4,y=1]{v41}
\Vertex[x=5,y=1]{v51}
\Vertex[x=6,y=1]{v61}
\Vertex[x=7,y=1]{v71}

\Vertex[x=0,y=2]{v02}
\Vertex[x=1,y=2]{v12}
\Vertex[x=2,y=2]{v22}
\Vertex[x=3,y=2]{v32}
\Vertex[x=4,y=2]{v42}
\Vertex[x=5,y=2]{v52}
\Vertex[x=6,y=2]{v62}
\Vertex[x=7,y=2]{v72}

\Vertex[x=1,y=3]{v13}
\Vertex[x=2,y=3]{v23}
\Vertex[x=3,y=3]{v33}
\Vertex[x=4,y=3]{v43}
\Vertex[x=5,y=3]{v53}
\Vertex[x=6,y=3]{v63}
\Vertex[x=7,y=3]{v73}

\Edges(    v10,v20,v30,v40,v50,v60,v70)
\Edges(v01,v11,v21,v31,v41,v51,v61,v71)
\Edges(v02,v12,v22,v32,v42,v52,v62,v72)
\Edges(    v13,v23,v33,v43,v53,v63,v73)

\Edge(v01)(v02)

\Edge(v10)(v11)
\Edge(v12)(v13)

\Edge(v21)(v22)

\Edge(v30)(v31)
\Edge(v32)(v33)

\Edge(v41)(v42)

\Edge(v50)(v51)
\Edge(v52)(v53)

\Edge(v61)(v62)

\Edge(v70)(v71)
\Edge(v72)(v73)
\end{tikzpicture}
\end{minipage}
\begin{minipage}{0.35\textwidth}
\centering
\begin{tikzpicture}[scale=0.4, every node/.style={scale=0.3}]
\GraphInit[vstyle=Simple]
\SetVertexSimple[MinSize=6pt]
\Vertex[x=1,y=0]{v10}
\Vertex[x=2,y=0]{v20}
\Vertex[x=3,y=0]{v30}
\Vertex[x=4,y=0]{v40}
\Vertex[x=5,y=0]{v50}
\Vertex[x=6,y=0]{v60}
\Vertex[x=7,y=0]{v70}
\Vertex[x=8,y=0]{v80}
\Vertex[x=9,y=0]{v90}

\Vertex[x=0,y=1]{v01}
\Vertex[x=1,y=1]{v11}
\Vertex[x=2,y=1]{v21}
\Vertex[x=3,y=1]{v31}
\Vertex[x=4,y=1]{v41}
\Vertex[x=5,y=1]{v51}
\Vertex[x=6,y=1]{v61}
\Vertex[x=7,y=1]{v71}
\Vertex[x=8,y=1]{v81}
\Vertex[x=9,y=1]{v91}

\Vertex[x=0,y=2]{v02}
\Vertex[x=1,y=2]{v12}
\Vertex[x=2,y=2]{v22}
\Vertex[x=3,y=2]{v32}
\Vertex[x=4,y=2]{v42}
\Vertex[x=5,y=2]{v52}
\Vertex[x=6,y=2]{v62}
\Vertex[x=7,y=2]{v72}
\Vertex[x=8,y=2]{v82}
\Vertex[x=9,y=2]{v92}

\Vertex[x=0,y=3]{v03}
\Vertex[x=1,y=3]{v13}
\Vertex[x=2,y=3]{v23}
\Vertex[x=3,y=3]{v33}
\Vertex[x=4,y=3]{v43}
\Vertex[x=5,y=3]{v53}
\Vertex[x=6,y=3]{v63}
\Vertex[x=7,y=3]{v73}
\Vertex[x=8,y=3]{v83}
\Vertex[x=9,y=3]{v93}

\Vertex[x=0,y=4]{v04}
\Vertex[x=1,y=4]{v14}
\Vertex[x=2,y=4]{v24}
\Vertex[x=3,y=4]{v34}
\Vertex[x=4,y=4]{v44}
\Vertex[x=5,y=4]{v54}
\Vertex[x=6,y=4]{v64}
\Vertex[x=7,y=4]{v74}
\Vertex[x=8,y=4]{v84}

\Edges(    v10,v20,v30,v40,v50,v60,v70,v80,v90)
\Edges(v01,v11,v21,v31,v41,v51,v61,v71,v81,v91)
\Edges(v02,v12,v22,v32,v42,v52,v62,v72,v82,v92)
\Edges(v03,v13,v23,v33,v43,v53,v63,v73,v83,v93)
\Edges(v04,v14,v24,v34,v44,v54,v64,v74,v84)

\Edge(v01)(v02)
\Edge(v03)(v04)

\Edge(v10)(v11)
\Edge(v12)(v13)

\Edge(v21)(v22)
\Edge(v23)(v24)

\Edge(v30)(v31)
\Edge(v32)(v33)

\Edge(v41)(v42)
\Edge(v43)(v44)

\Edge(v50)(v51)
\Edge(v52)(v53)

\Edge(v61)(v62)
\Edge(v63)(v64)

\Edge(v70)(v71)
\Edge(v72)(v73)

\Edge(v81)(v82)
\Edge(v83)(v84)

\Edge(v90)(v91)
\Edge(v92)(v93)
\end{tikzpicture}
\end{minipage}
\caption{Walls of height 2, 3, and 4, respectively~\cite{DP16}.}\label{f-walls}
\end{center}
\end{figure}

We also need the following lemma.

\begin{lemma}\label{l-wallatom}
For every constant $k\geq 0$, every $k$-subdivided wall and every complement of a $k$-subdivided wall is an atom.
\end{lemma}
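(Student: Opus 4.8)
Recall that a graph is an atom precisely when it has no clique cutset, so the plan is to show that none of these graphs has a clique cutset. Write $W$ for a wall (of height at least $2$) and $W_k$ for its $k$-subdivision, so $W_0=W$. I will use the following elementary properties of walls: $W$ is $2$-connected and planar; $W$ is bipartite, has girth $6$, and has maximum degree $3$; and $W$ has a matching of size at least $5$ (immediate from its structure). Subdividing each edge $k$ times preserves all of these, except that the girth becomes $6(k+1)$ and the matching number does not decrease; in particular $W_k$ is triangle-free, so every clique of $W_k$ has at most two vertices, and a clique cutset of $W_k$ is either a cut vertex or a pair $\{u,v\}$ of adjacent vertices with $W_k-\{u,v\}$ disconnected.

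There is no cut vertex: $W$ is $2$-connected, and the subdivision of a $2$-connected graph on at least three vertices is again $2$-connected. So let $uv$ be an edge of $W_k$, and suppose first that at least one endpoint, say $v$, was introduced by the subdivision; then $v$ is an internal vertex of the path $P_e$ that replaced some edge $e=xy$ of $W$, so $\{u,v\}\subseteq V(P_e)$. Since $W$ is $2$-connected it has no bridge, hence $W-e$ is connected; therefore in $W_k-\{u,v\}$ all surviving original vertices of $W$, together with every path $P_f$ for $f\ne e$, lie in a single component, and the (at most two) leftover segments of $P_e$ are each attached to $x$ or to $y$, which lie in that component. Thus $W_k-\{u,v\}$ is connected.

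The only case left is $k=0$, with $uv$ an edge of the wall $W$ itself, and this is the step requiring real work: one must show that no edge of $W$ is a $2$-cut. Here I would invoke the standard fact that in a $2$-connected plane graph a pair $\{u,v\}$ is a cut only if some face contains both $u$ and $v$ without having the edge $uv$ on its boundary. Every face of $W$ is a hexagonal brick or the outer face. A hexagonal face is chordless, since a chord of a $6$-cycle would create a cycle of length $3$ or $4$, contradicting girth $6$; and the cycle bounding the outer face is also chordless, as is readily checked from the brick structure of walls. Hence no face of $W$ contains both endpoints of an edge without containing that edge, so no edge of $W$ is a $2$-cut, and $W_k$ is an atom.

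Finally, for $\overline{W_k}$: a clique cutset of $\overline{W_k}$ is a clique $K$ of $\overline{W_k}$ — equivalently, an independent set $K$ of $W_k$ — whose removal disconnects $\overline{W_k}$, which means $V(W_k)\setminus K$ partitions into non-empty parts $A,B$ with $A$ complete to $B$ in $W_k$. Triangle-freeness forces $A$ and $B$ to be independent, so $W_k-K$ is a complete bipartite graph $K_{p,q}$ with $p,q\ge 1$. If $p,q\ge 2$ this produces a $4$-cycle in $W_k$, contradicting girth $6$. Otherwise $W_k-K$ is a star, which has at most $4$ vertices because $\Delta(W_k)\le 3$, so $K$ is an independent set of $W_k$ omitting at most $4$ vertices; but a matching of size $5$ forces $\alpha(W_k)\le|V(W_k)|-5$, a contradiction. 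Hence $\overline{W_k}$ also has no clique cutset. I expect the main obstacle to be the case $k=0$ above — ruling out edges of $W$ that are $2$-cuts — which is exactly where one has to exploit the planar and girth structure of walls, in particular that the outer boundary cycle has no chord.
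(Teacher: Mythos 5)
Your proof is correct, and it splits exactly where the paper does: the two halves are handled quite differently relative to the paper. For the complement, your argument is essentially the paper's: the paper also observes that two components of $\overline{W}-K$ with at least two vertices each would force a $C_4$ in $W$, and then rules out a singleton component $\{a\}$ by noting that the non-neighbourhood of $a$ in $W$ would be independent, which no vertex of a wall admits; your matching-number phrasing ($\nu\geq 5$, so an independent set cannot omit only $4$ vertices) is just a quantified version of that last step, at the mild cost of restricting to walls of height at least $2$ (harmless for the paper's purposes, since only large walls are needed for Lemma~\ref{l-walls}). For the wall itself, the paper offers no argument at all beyond ``readily seen'' -- indeed its literal claim that no set of at most two vertices disconnects $W$ is an overstatement, since walls do have non-adjacent $2$-cuts (the two neighbours of a degree-two corner pair); what is actually needed, and what you correctly isolate, is that no cut vertex and no \emph{adjacent} pair disconnects $W_k$. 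Your reduction of the subdivided case $k\geq 1$ to bridgelessness of $W$ is clean (modulo the small imprecision that when $u$ is an original endpoint of $e$ there is only one leftover segment of $P_e$, attached to the other endpoint, and one also uses $2$-connectivity of $W-u$), and your $k=0$ argument via planarity is sound: the separation-pair fact you invoke is true folklore (the $\{u,v\}$-bridges occupy consecutive sectors in a plane embedding, giving at least three faces whose boundary cycles pass through both $u$ and $v$, of which only two contain the edge $uv$), and the face boundaries of a wall -- hexagonal bricks and the outer cycle -- are indeed chordless. Both of these ingredients deserve at least a line of justification rather than a bare appeal, and the planar machinery is heavier than necessary: a direct check that deleting any adjacent pair from the grid-like wall leaves it connected (which is presumably what the authors meant by ``readily seen'') gives the same conclusion more elementarily. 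What your route buys is a reusable criterion -- in a $2$-connected plane graph all of whose face boundaries are induced cycles, no edge is a $2$-cut -- at the price of planarity arguments the paper avoids stating.
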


\begin{proof}
Let $k\geq 0$. Let $W$ be a $k$-subdivided wall. As $W$ is $C_3$-free, a largest clique has size~2. It is readily seen
that $W$ contains no set of at most two vertices that disconnect $W$. 

Now consider the  complement $\overline{W}$ of $W$.
For contradiction, assume that $\overline{W}$ is not an atom. Then $\overline{W}$ has a clique cutset~$K$.
Let $A$ and $B$ be two connected components of $\overline{W}-K$.
If $A$ and $B$ both have at least two vertices $a_1,a_2$ and $b_1,b_2$, respectively, then $W[\{a_1,a_2,b_1,b_2\}]$ contains 
a $C_4$, which is not possible. Hence, one of $A,B$, say $A$, only contains one vertex~$a$.
As the neighbourhood of $a$ in $\overline{W}$ is a clique, the non-neighbourhood of $a$ in $W$ is an independent set.
However, no vertex in $W$ has this property.
\end{proof}

Recall that a class of $H$-free graphs has bounded clique-width if and only if $H$ is an induced subgraph of $P_4$ (see~\cite{DP16}). 
We show that the same classification holds for $H$-free atoms.

\begin{proposition}\label{p-p4}
Let~$H$ be a graph. The class of $H$-free atoms has bounded clique-width if and only if~$H$ is an induced subgraph of~$P_4$.
\end{proposition}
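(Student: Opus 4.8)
The plan is to prove both directions of the equivalence, leaning on the already-established fact that $H$-free \emph{graphs} have bounded clique-width if and only if $H$ is an induced subgraph of $P_4$, together with \autoref{l-walls} and \autoref{l-wallatom}.

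\textbf{The easy direction.} Suppose $H$ is an induced subgraph of $P_4$. Then the class of all $H$-free graphs already has bounded clique-width, and since $H$-free atoms form a subclass of $H$-free graphs, the class of $H$-free atoms has bounded clique-width as well. Nothing more is needed here.

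\textbf{The hard direction.} Suppose $H$ is not an induced subgraph of $P_4$. I would like to exhibit, for every constant $c$, an $H$-free atom of clique-width exceeding $c$. The idea is to reuse the witnesses that show $H$-free \emph{graphs} have unbounded clique-width, but to pass to atomic witnesses. Concretely, I would invoke the structural dichotomy underlying the known classification for $H$-free graphs: when $H \not\subseteq_{\mathrm{ind}} P_4$, the graph $H$ must contain as an induced subgraph one of a short list of small graphs --- for instance $C_3$ (equivalently $K_3$), $\overline{C_3}=3P_1$, $P_1+P_3$ or $\overline{P_1+P_3}$, $2P_2$ or $\overline{2P_2}=C_4$, or $K_{1,3}$ or $\overline{K_{1,3}}$, and so on --- and in each such case the literature provides a family of graphs of unbounded clique-width that is $H$-free. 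I would then argue that in each case one can take these witnesses to be (subdivided) walls or their complements: $k$-subdivided walls are $C_3$-free, bipartite, contain no $C_4$ for $k\geq 1$, contain no $2P_2$-complement, etc., so for the ``sparse'' cases of $H$ the family of $k$-subdivided walls (with $k$ chosen large enough to kill any small forbidden subgraph of $H$ that is not an induced subgraph of every large sparse graph) is $H$-free and, by \autoref{l-walls}, has unbounded clique-width; for the ``dense'' cases of $H$ one takes complements of $k$-subdivided walls instead. The crucial point --- and this is where \autoref{l-wallatom} does the work --- is that every $k$-subdivided wall and every complement of a $k$-subdivided wall is an atom. Hence these witnesses already live inside the class of $H$-free atoms, so that class has unbounded clique-width.

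\textbf{The main obstacle.} The real content is the bookkeeping in the hard direction: verifying that for \emph{every} $H$ with $H\not\subseteq_{\mathrm{ind}}P_4$, at least one of the two families (large-girth subdivided walls, or their complements) is $H$-free. This amounts to checking that $H$ always contains a small induced subgraph that is simultaneously absent from all sufficiently subdivided walls \emph{or} absent from all their complements. A clean way to organize this is: if $H$ has an induced $P_1+P_3$, $2P_2$, or $K_{1,3}$ (equivalently, if the $K_{1,3}$/$2P_2$/$P_1+P_3$-type obstruction on the ``sparse'' side applies), use subdivided walls; if instead $\overline{H}$ has such a subgraph, use complements of subdivided walls; and one checks that when $H\not\subseteq_{\mathrm{ind}}P_4$ at least one of $H,\overline H$ falls in the first bucket --- this is exactly the combinatorial heart of the known $H$-free classification, which I would cite from~\cite{DP16}. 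Once that case analysis is in place, \autoref{l-wallatom} converts each graph-level witness into an atom-level witness with no loss, and the proposition follows. I expect the write-up to be short, with the case analysis either deferred to~\cite{DP16} or compressed into one paragraph.
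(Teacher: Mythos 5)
Your overall strategy (prove both directions, and use \autoref{l-wallatom} to upgrade wall-type witnesses to atom witnesses) is the same as the paper's, and your easy direction is fine. However, the case analysis you sketch for the hard direction does not work as stated. A family can witness unboundedness for $H$-free atoms only if its members \emph{exclude} $H$, i.e.\ only if $H$ contains some small graph that the family \emph{forbids}; your bucket rule is inverted. You send any $H$ containing $K_{1,3}$, $2P_2$ or $P_1+P_3$ to the subdivided walls, but subdivided walls themselves contain all of these as induced subgraphs (every degree-$3$ vertex of a wall gives an induced claw, and long induced paths give $2P_2$ and $P_1+P_3$), so for $H=2P_2$ or $H=K_{1,3}$ your proposed witnesses are simply not $H$-free. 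Subdividing can only destroy short cycles; it cannot ``kill'' any acyclic pattern, so the phrase about choosing $k$ large enough does not rescue these cases. In addition, your coverage claim (``at least one of $H,\overline H$ contains a listed obstruction'') already fails for $H=C_5$, which is self-complementary and contains none of $K_{1,3}$, $2P_2$, $P_1+P_3$ — yet it must be handled.

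The correct (and simpler) split, which is what the paper does, is: if $H$ contains a cycle, take $k$-subdivided walls with $k$ so large that their girth exceeds $|V(H)|$; these are $H$-free, they are atoms by \autoref{l-wallatom}, and they have unbounded clique-width by \autoref{l-walls}. Otherwise $H$ is a forest and, since it is not an induced subgraph of $P_4$, it contains an induced $3P_1$ or $2P_2$ (an elementary check; no need to invoke the full classification machinery of~\cite{DP16}); then complements of walls are $(3P_1,2P_2)$-free, hence $H$-free, they are atoms by \autoref{l-wallatom}, and they have unbounded clique-width because complementation preserves (un)boundedness of clique-width~\cite{KLM09} together with \autoref{l-walls} — a fact your sketch omits but needs for the ``dense'' bucket. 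If you reorganize your hard direction along these two cases, the argument goes through.
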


\begin{proof}
If~$H$ is an induced subgraph of~$P_4$, then the class of $H$-free graphs, which contains all $H$-free atoms,
has clique-width at most~$2$~\cite{CO00}.

Now suppose that $H$ is not an induced subgraph of $P_4$.
For every $k\geq 0$, every $k$-subdivided wall is an atom by Lemma~\ref{l-wallatom}.
First suppose that $H$ contains a cycle. Then the class of $k$-subdivided walls 
is contained in the class of $H$-free atoms for some appropriate value of $k$. 
Hence, the class of $H$-free atoms has unbounded clique-width due to \autoref{l-walls}.

Now suppose that $H$ does not contain a cycle. Hence $H$ is a forest.
As $H$ is not an induced subgraph of $P_4$, we find that $H$ must contain an induced $3P_1$ or an induced $2P_2$. 
Let ${\cal G}$ be the class of $H$-free atoms, and
let $\overline{\cal G}$ be the class that consists of the complements of $H$-free atoms.
As every wall is $(C_3,C_4)$-free, the complement of every wall is $(3P_1,2P_2)$-free. By Lemma~\ref{l-wallatom},
the complement of every wall is an atom as well. Hence, $\overline{\cal G}$ contains all complements of walls.
It is well known that complementing all graphs in a class of unbounded clique-width results in another class of unbounded 
clique-width~\cite{KLM09}. Hence, complements of walls have unbounded clique-width due to Lemma~\ref{l-walls}.
This means that $\overline{\cal G}$, and thus ${\cal G}$, has unbounded clique-width.
\end{proof}

In contrast to \autoref{p-p4}, we recall that there exist classes of $(H_1,H_2,H_3)$-free graphs of unbounded clique-width whose atoms have bounded clique-width. Namely, the class of split graphs, or equivalently, 
the class of $(C_4,C_5,2P_2)$-free graphs, has  unbounded clique-width~\cite{MR99}, whereas split atoms are cliques and thus have clique-width at most~2. 
In the next section, we will prove that there exist even classes of $(H_1,H_2)$-free graphs with this property by showing that the property holds even for the class of $(C_4,P_6)$-free graphs.

\section{The Polynomial-Time Result}\label{sec:poly}

In this section, we will prove our main result.

\begin{theorem}\label{thm:p6c4}
\cn is polynomial-time solvable for $(C_4,P_6)$-free graphs. 
\end{theorem}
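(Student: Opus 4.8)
The plan is to combine a clique-cutset decomposition with a clique-width bound. The class of $(C_4,P_6)$-free graphs is hereditary, and if $G$ is $(C_4,P_6)$-free then so is every graph appearing in a decomposition of $G$ along clique cutsets; moreover, by a classical observation (see~\cite{Ta85}), \cn on a hereditary graph class reduces in polynomial time to \cn on its atoms, since a proper $k$-colouring of $G$ can be assembled by independently colouring the pieces and permuting colours so that they agree on the common clique cutset, and the clique-cutset decomposition itself is computable in polynomial time. Hence it suffices to show that \cn is polynomial-time solvable on $(C_4,P_6)$-free atoms, and by \autoref{thm:kr03} it is enough to prove that the class of $(C_4,P_6)$-free atoms has bounded clique-width.

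To bound the clique-width of a $(C_4,P_6)$-free atom, I would first reduce to a \emph{strong atom}, i.e.\ an atom with no universal vertex and no two true twins. Deleting universal vertices and collapsing each class of true twins to a single vertex changes clique-width by at most a small additive constant, keeps the graph $(C_4,P_6)$-free, and keeps it an atom; conversely, blowing a vertex up into a clique or re-adding a universal vertex increases clique-width by at most a constant. So it remains to bound the clique-width of $(C_4,P_6)$-free strong atoms. For this I would establish a structural decomposition theorem for such graphs --- a clique-width-oriented refinement of the chromatic decomposition used in~\cite{GH17} --- showing that the vertex set of a $(C_4,P_6)$-free strong atom can be partitioned into a bounded number of parts, each inducing a graph from a restricted family (a suitable subclass of $C_4$-free graphs, together with parts governed by homogeneous sets), with a ``simple'' adjacency pattern between the parts.

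The two technical ingredients feeding this decomposition are: (i) a divide-and-conquer argument bounding the clique-width of the relevant subclass of $C_4$-free graphs, exploiting that $(C_4,P_6)$-free graphs always have a dominating clique or dominating edge (a Bacs\'o--Tuza-type property), splitting the graph along such a dominating structure, and tracking how labels propagate through the recursion; and (ii) a general lemma that if a graph has a vertex partition into a bounded number of homogeneous pairs and triples of sets such that each of these sets, together with the ``quotient'' they induce, has bounded clique-width, then the whole graph does. Applying (ii) along the decomposition, with the parts handled by (i) or recursively, yields an explicit constant bound on the clique-width of $(C_4,P_6)$-free strong atoms, hence of all $(C_4,P_6)$-free atoms, and then \autoref{thm:kr03} finishes the proof.

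I expect the main obstacle to be making the decomposition genuinely ``clique-width friendly'' together with ingredient~(i). It is comparatively easy to decompose a $(C_4,P_6)$-free atom using a dominating clique or edge, but the remainder left after removing the closed neighbourhood of such a structure behaves like an independent set whose attachment to the rest of the graph can still be intricate; controlling this with only finitely many labels --- rather than merely bounding its chromatic number, as in~\cite{GH17} --- is the delicate part, and is presumably where the bulk of the case analysis and the recursive homogeneous-set bound of (ii) are consumed. A secondary point requiring care is verifying that every reduction used (removing universal vertices, collapsing twins, and the clique-cutset decomposition) stays within the class and preserves atomicity wherever atomicity is needed.
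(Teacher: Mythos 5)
Your outer skeleton matches the paper exactly: reduce \cn on the hereditary class to its atoms via Tarjan's result (\autoref{t-tarjan}), invoke \autoref{thm:kr03}, reduce further to strong atoms (the paper gets this from \autoref{lem:prime}, with no change in clique-width at all), and then aim to bound the clique-width of $(C_4,P_6)$-free strong atoms using a lemma on homogeneous pairs and triples (the paper's \autoref{lem:boundcw}) plus a divide-and-conquer bound for a special subclass of $C_4$-free graphs (the paper's \autoref{lem:recursion}). However, the heart of the theorem --- the proof that $(C_4,P_6)$-free strong atoms actually admit such a decomposition --- is not supplied: you defer it to an unproven ``clique-width-oriented refinement'' of the decomposition of~\cite{GH17}. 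In the paper this is precisely where almost all the work lies: one eliminates, in order, induced copies of $F_1$, $C_6$, $F_2$ and $C_5$ (\autoref{lem:F1}--\autoref{lem:C5}), each case resting on a long analysis of the sets $S(X)$ around a dominating 5- or 6-cycle (properties \ref{s5}--\ref{item:p15} and many further claims), on the Brandst\"adt--Ho\`ang result that in a $(C_4,P_6)$-free atom every induced $C_5$ is dominating (\autoref{lem:P6C4atom}), and finally on Dirac's theorem that a chordal atom is a clique. Without this case analysis the clique-width bound, and hence the theorem, is not established.

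Moreover, the one concrete structural mechanism you do propose for ingredient~(i) is wrong: $(C_4,P_6)$-free graphs need not have a dominating clique or dominating edge ($C_5$ itself is a counterexample), so a Bacs\'o--Tuza-type split along such a structure cannot drive the recursion. The paper's \autoref{lem:recursion} works quite differently: the graph is partitioned into a clique $A$ and a $P_4$-free part $B$ satisfying two extra interaction conditions, and the recursion exploits Seinsche's theorem (either $B$ or $\overline{B}$ is disconnected) together with $C_4$-freeness to recurse on smaller pieces with nice $4$-expressions; the dominating-structure phenomenon that is actually used is the dominance of induced $C_5$'s and $C_6$'s inside atoms, not dominating cliques or edges in the whole class. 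So the proposal is a correct and faithful outline of the paper's strategy, but it leaves the decisive structural decomposition unproved and substitutes for it a false auxiliary claim; as it stands it is a plan rather than a proof.
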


The main ingredient for proving \autoref{thm:p6c4} is a new structural property of $(C_4,P_6)$-free atoms, 
which asserts that $(C_4,P_6)$-free atoms have bounded clique-width.
 The following result is due to Tarjan.
 
\begin{theorem}[\cite{Ta85}]\label{t-tarjan}
If {\sc Colouring} is polynomial-time solvable on atoms in an hereditary class~${\cal G}$, then it is polynomial-time solvable on all graphs in~${\cal G}$.
\end{theorem}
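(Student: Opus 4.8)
The plan is to reduce an instance $(G,k)$ of \cn with $G\in{\cal G}$ to polynomially many instances on the \emph{atoms} of $G$, produced by the clique cutset decomposition of Tarjan~\cite{Ta85}. First I would invoke Tarjan's decomposition algorithm, which in time $O(nm)$ produces a binary \emph{decomposition tree} $\mathcal{T}$: the root is $G$; every internal node $H$ carries a clique cutset $K$ of $H$, so that $V(H)$ partitions into $K$ and two non-empty sets $V_1,V_2$ with no edges between $V_1$ and $V_2$, and the children of $H$ are $H_1=H[V_1\cup K]$ and $H_2=H[V_2\cup K]$; every leaf is an atom. The two facts I would take from~\cite{Ta85} are that $\mathcal{T}$ has only $O(n)$ nodes (in particular $O(n)$ leaves) and that each node of $\mathcal{T}$ is an induced subgraph of $G$; since ${\cal G}$ is hereditary, every leaf is therefore an atom belonging to ${\cal G}$, on which the assumed polynomial algorithm for \cn applies.

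The combinatorial heart is the claim that for an internal node $H$ with children $H_1,H_2$ as above and any integer $k$, the graph $H$ is $k$-colourable if and only if both $H_1$ and $H_2$ are $k$-colourable. The ``only if'' direction is immediate because $H_1$ and $H_2$ are induced subgraphs of $H$. For the ``if'' direction, take proper $k$-colourings $c_1$ of $H_1$ and $c_2$ of $H_2$. Since $K$ is a clique contained in $H_1$, we have $k\geq|K|$ and $c_1$ uses $|K|$ distinct colours on $K$; likewise $c_2$ uses $|K|$ distinct colours on $K$. As at most $k$ colours are in play, we may post-compose $c_2$ with a permutation of $\{1,\dots,k\}$ so that $c_1$ and $c_2$ agree on $K$. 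Now let $c$ agree with $c_1$ on $V_1\cup K$ and with $c_2$ on $V_2$; this is well defined, and it is a proper $k$-colouring of $H$ because every edge of $H$ lies inside $V_1\cup K$ or inside $V_2\cup K$ (there being no edges between $V_1$ and $V_2$) and hence is properly coloured by $c_1$ or by $c_2$.

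Iterating this claim along $\mathcal{T}$ from the leaves to the root gives that $G$ is $k$-colourable if and only if every leaf of $\mathcal{T}$ — that is, every atom produced by the decomposition — is $k$-colourable. The algorithm is then: build $\mathcal{T}$; for each of the $O(n)$ leaf atoms, which all lie in ${\cal G}$, decide $k$-colourability using the assumed polynomial algorithm; answer ``yes'' precisely when all of them are $k$-colourable. The running time is $O(nm)$ for the decomposition plus $O(n)$ calls to the atom algorithm, hence polynomial.

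I expect the only delicate point to be the running-time analysis rather than any of the colouring arguments: a naive ``find a clique cutset, recurse on both sides'' scheme copies the cutset into both branches and could a priori spawn exponentially many subproblems, so the proof genuinely leans on Tarjan's stronger guarantee that the decomposition into atoms can be realised by a tree with only $O(n)$ nodes and computed in $O(nm)$ time; the gluing argument in the second paragraph is the rest of the proof.
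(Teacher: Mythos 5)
Your proposal is correct and follows essentially the same route as the paper, which simply invokes Tarjan's decomposition by clique separators~\cite{Ta85}: a decomposition tree with $O(n)$ atoms computable in $O(nm)$ time, together with the standard gluing argument that a graph with a clique cutset is $k$-colourable if and only if both pieces are (permuting colours so the two colourings agree on the cutset). Your closing remark is also on point -- the linear bound on the number of atoms from Tarjan's construction is exactly what prevents the naive recursion from blowing up.
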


As the class of $(C_4,P_6)$-free graphs is hereditary, we can apply \autoref{t-tarjan} and may restrict ourselves to $(C_4,P_6)$-free atoms. Then, due to \autoref{thm:kr03}, it suffices to show the following 
result in order to prove~\autoref{thm:p6c4}.

\begin{theorem}\label{thm:p6c4atom}
The class of $(C_4,P_6)$-free atoms has bounded clique-width.
More precisely, every $(C_4,P_6)$-free atom has clique-width at most $18$.
\end{theorem}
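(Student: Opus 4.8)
The plan is to reduce, through a sequence of standard ``clean-up'' reductions, from $(C_4,P_6)$-free atoms to a more structured subclass and then bound the clique-width of that subclass by exploiting a decomposition theorem. First, I would observe that clique-width is unaffected (up to an additive constant) by adding or deleting universal vertices and by the blow-up operation: a graph and its skeleton differ only in having true twins duplicated, and duplicating a vertex class keeps the clique-width the same, while deleting a universal vertex changes it by at most one. Hence it suffices to bound the clique-width of \emph{strong atoms}, i.e. $(C_4,P_6)$-free atoms with no universal vertex and no pair of true twins (equivalently, reduced atoms whose skeleton equals the graph itself). I expect the paper to have stated the precise ``budget'' — something like: if strong atoms have clique-width at most $c$, then all atoms have clique-width at most $c+1$ (for the universal vertex) — so the target $18$ presumably comes from a bound like $17$ on strong atoms.

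Next I would invoke (or prove) a decomposition theorem for $(C_4,P_6)$-free strong atoms, refining the one from~\cite{GH17}. The expected shape: every such graph has a dominating structure of bounded size — say a dominating clique, or a dominating vertex together with its closed neighbourhood being highly structured — because $P_6$-freeness forces short domination, and $C_4$-freeness severely restricts how the rest of the graph attaches to it. Concretely, I would fix a dominating set $D$ of bounded size (using that $P_6$-free graphs have dominating sets inducing $P_4$-free graphs or cliques), partition $V\setminus D$ according to the neighbourhoods in $D$ (only boundedly many ``types''), and argue that within each type the $C_4$-freeness plus strongness forces the induced subgraph to be simple — for instance, a disjoint union of cliques with at most one vertex each, or a graph whose own structure can be controlled recursively. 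The interactions between different types are then governed by homogeneous pairs and triples of sets, which is exactly where the second announced ingredient enters: a general lemma bounding $\cw(G)$ by a function of the clique-widths of the boundedly many subgraphs induced by a partition into homogeneous pairs/triples, plus a bounded additive term for the ``quotient'' structure on the parts. I would apply that lemma with the parts being the neighbourhood-types, reducing the problem to bounding the clique-width of each type's induced subgraph.

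Finally, for the pieces that are not immediately of bounded clique-width, I would run the divide-and-conquer argument for the relevant subclass of $C_4$-free graphs: recursively split such a piece using a vertex (or a bounded set) whose non-neighbourhood, by $C_4$-freeness, is an independent set or otherwise simple, producing two smaller instances whose union is controlled by a constant number of extra labels; bound the recursion depth or show the pieces shrink to bounded-clique-width base cases. Assembling: strong atoms decompose into boundedly many homogeneous parts, each part has clique-width bounded by the divide-and-conquer argument (or is trivially small), the homogeneous-sets lemma glues them with a constant overhead, and the blow-up/universal-vertex reductions add at most one more label, yielding the explicit bound $18$. The main obstacle, I expect, is the combinatorial case analysis inside the decomposition theorem — proving that each neighbourhood-type induces a $C_4$-free graph of a shape amenable to the divide-and-conquer bound (rather than an arbitrary $C_4$-free graph, which can have unbounded clique-width) — and in tracking the constants carefully enough that the pieces, the homogeneous-set gluing, and the universal vertex together sum to exactly $18$; the general homogeneous pairs/triples lemma and the $C_4$-free divide-and-conquer lemma are each substantial but more self-contained technical steps.
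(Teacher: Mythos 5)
Your outline does reproduce the paper's high-level architecture (reduce to strong atoms, decompose around a bounded dominating structure, glue with the homogeneous pairs/triples bound of \autoref{lem:boundcw}, and handle awkward pieces with the divide-and-conquer \autoref{lem:recursion}), but as it stands it is a plan rather than a proof, and the parts you defer are precisely where the theorem lives. Concretely: the bounded dominating structure is not a dominating clique or a $P_4$-free dominating set obtained from $P_6$-freeness, but an induced $C_5$ or $C_6$, via the Brandst\"adt--Ho\`ang result (\autoref{lem:P6C4atom}) that in a $(C_4,P_6)$-free atom every induced $C_5$ is dominating and a non-dominating $C_6$ forces a blow-up of the Petersen graph joined to a clique; and the case in which no such cycle exists is closed by chordality together with Dirac's theorem (a chordal atom is a clique), not by any domination argument. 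The actual proof is organised as an elimination sequence $F_1$, $C_6$, $F_2$, $C_5$ (\autoref{lem:F1}--\autoref{lem:C5}, with bounds $13,13,14,18$), and within each lemma the assertion you state as an expectation --- that each neighbourhood type relative to the cycle is ``simple'' and that the types interact through homogeneous pairs and triples --- is exactly the content that must be proved. It requires the long case analyses (P1)--(P15) and the claims inside \autoref{lem:C5}, which repeatedly use atomicity (a candidate clique cutset must fail) and strongness (a homogeneous clique must be empty or a single vertex) to kill or shrink sets such as $S(5,1,2)$, $S''(3,4,5)$, $S(2,3,4)$; nothing in your sketch indicates how these reductions would be obtained. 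Likewise, \autoref{lem:recursion} is not a generic recursion on $C_4$-free pieces (which, as you note, can have unbounded clique-width): it needs a partition into a clique $A$ and a $P_4$-free part $B$ with the extra conditions that no vertex of $A$ has two non-adjacent neighbours in $B$ and no induced $P_4$ starts in $A$ and continues in $B$, and verifying these hypotheses for the concrete pairs arising in the proof (e.g.\ $(S''_5,Z)$ and the pieces $(X_i,M_i)$ inside Claim~\ref{clm:S5XS4}) is itself a nontrivial part of the argument.

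A smaller quantitative point: your bookkeeping ``$17$ on strong atoms $+1$ for a universal vertex'' is not how the constant arises. Twin classes and universal vertices yield proper homogeneous sets one of whose factors is a clique, so by \autoref{lem:prime} deleting them preserves the clique-width exactly; the bound $18$ is therefore proved directly for strong atoms (it is the constant coming out of \autoref{lem:C5}, namely $|V_0|+2\cdot 4+3$ with $|V_0|\le 7$), and blow-up arguments appear only inside special subcases (the $S_5\neq\emptyset$ case of \autoref{lem:F1} and the non-dominating $C_6$ case of \autoref{lem:C6}). So the approach you describe is the right one in outline, but the decomposition theorem you propose to ``invoke or prove'' does not exist in the form you describe, and supplying the actual structural analysis around the dominating cycle is the bulk of the work you would still have to do.
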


Note that $(C_4,P_6)$-free atoms are an example of a class of $(H_1,H_2)$-free graphs of unbounded clique-width, 
whose atoms have bounded clique-width.

The remainder of the section is organised as follows. In \autoref{subsec:cw}, we present
the key tools on clique-width that play an important role in the proof of \autoref{thm:p6c4atom}.
In~\autoref {subsec:C5}, we list structural properties around a 5-cycle in a $(C_4,P_6)$-free graph
that are frequently used in later proofs.  We then present the proof of~\autoref{thm:p6c4atom}
in~\autoref{subsec:main proof}.

\subsection{Key Tools for Clique-Width}\label{subsec:cw}

Let $G=(V,E)$ be a graph and $H$ be a proper homogeneous set in $G$.
Then $V\setminus H$ is partitioned into two subsets $N$ and $M$ where
$N$ is complete to $H$ and $M$ is anti-complete to $H$.
Let $h\in H$ be an arbitrary vertex and $G_h = G - (H\setminus \{h\})$.
We say that $H$ and $G_h$ are \emph{factors} of $G$ with respect to $H$.
Suppose that $\tau$ is an $\ell_1$-expression for $G_h$ using labels $1,\ldots,\ell_1$
and $\sigma$ is an $\ell_2$-expression for $H$ using labels $1,\ldots,\ell_2$. 
Then substituting $i(h)$ in $\tau$ with $\rho_{1\rightarrow i}\ldots \rho_{\ell_2\rightarrow i}\sigma$
results in an $\ell$-expression for $G$ where $\ell=\max\{\ell_1,\ell_2\}$. 
Moreover, all vertices in $H$ have the same label in this $\ell$-expression for $G$. 

\begin{lemma}[\cite{CO00}]\label{lem:prime}
The clique-width of any graph $G$ is the maximum clique-width of
any prime induced subgraph of $G$.
\end{lemma}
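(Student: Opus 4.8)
The plan is to prove the two inequalities $\max_H\cw(H)\le\cw(G)$ and $\cw(G)\le\max_H\cw(H)$ separately, where $H$ ranges over the prime induced subgraphs of $G$; this family is nonempty because $K_1$ is prime. The first inequality is the easy direction, and it comes from the fact that clique-width does not increase under taking induced subgraphs: given a $\cw(G)$-expression of $G$ and a set $S\subseteq V(G)$, deleting every leaf operation $i(v)$ with $v\notin S$ and then discarding the operations that have become vacuous yields an expression of $G[S]$ with no additional labels. Hence $\cw(H)\le\cw(G)$ for every prime induced subgraph $H$, and taking the maximum gives the inequality.

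For the reverse inequality I would induct on $|V(G)|$. Let $c$ be the maximum of $\cw(H)$ over all prime induced subgraphs $H$ of $G$. If $G$ is prime — which in particular covers all graphs on at most two vertices, since these have no proper homogeneous set — then $G$ itself is one of the graphs $H$, so $\cw(G)\le c$ and we are done. Otherwise $G$ has a proper homogeneous set $H$, so $2\le|H|\le|V(G)|-1$; fix $h\in H$ and set $G_h=G-(H\setminus\{h\})$, as in the discussion of factors preceding this lemma. Both $G[H]$ and $G_h$ are induced subgraphs of $G$ on strictly fewer vertices, so the induction hypothesis applies to each; moreover every prime induced subgraph of $G[H]$ or of $G_h$ is again a prime induced subgraph of $G$, and therefore $\cw(G[H])\le c$ and $\cw(G_h)\le c$. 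Now take optimal expressions $\sigma$ for $G[H]$ and $\tau$ for $G_h$, and substitute $\sigma$ — with all of its labels first renamed to the label that $h$ carries in $\tau$ — for the leaf of $\tau$ that introduces $h$, exactly as described there. The result is an expression for $G$ using $\max\{\cw(G_h),\cw(G[H])\}\le c$ labels, which completes the induction; combining this with the first inequality gives the claimed equality.

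The only step here that is not pure bookkeeping is the correctness of that substitution, and this is precisely the computation already carried out in the paragraph preceding the statement: because $H$ is homogeneous, $V(G)\setminus H$ partitions into a part $N$ complete to $H$ and a part $M$ anticomplete to $H$, so every vertex of $H$ has exactly the external adjacencies that $h$ has in $G_h$; collapsing all of $\sigma$'s labels onto the single label $i$ of $h$ then forces each later join $\eta_{j,i}$ performed in $\tau$ to act on all of $H$ at once, reproducing its effect on $h$, while the internal structure of $G[H]$ already built inside $\sigma$ is left untouched. I therefore expect the main (and fairly mild) obstacle to be setting up the induction cleanly: verifying that both factors are strictly smaller than $G$ (which is where $2\le|H|$ and $|H|\le|V(G)|-1$ are used) and, more importantly, observing that passing to a factor cannot increase the governing constant $c$ — because a prime induced subgraph of a factor is still a prime induced subgraph of $G$ — so the bound propagates down the recursion without deteriorating.
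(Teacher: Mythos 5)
Your argument is correct: the paper does not prove this lemma itself (it is cited from Courcelle and Olariu), and the only ingredient it supplies is exactly the factor-substitution construction in the paragraph preceding the statement, which you reuse as the key step. Your induction on $|V(G)|$ via a proper homogeneous set, together with the observations that both factors are strictly smaller and that prime induced subgraphs of a factor are prime induced subgraphs of $G$, is the standard way to derive the lemma from that construction, so I see no gap.
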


A bipartite graph  is a \emph{chain} graph
if it is $2P_2$-free. A \emph{co-bipartite chain graph} is the complement
of a bipartite chain graph. Let $G$ be a (not necessarily bipartite) graph such that $V(G)$ is partitioned into
two subsets $A$ and $B$. We say that an $\ell$-expression for $G$ is \emph{nice}
if all vertices in $A$ end up with the same label $i$ and all vertices in $B$ end up with the same label $j$
with $i\neq j$. It is well-known that any co-bipartite chain graph whose vertex set is partitioned into two cliques
has a nice $4$-expression (see Appendix~\ref{a-a} for a proof).

\begin{lemma}[Folklore]\label{lem:co-bipartite chain graph}
There is a nice $4$-expression for any co-bipartite chain graph.
\end{lemma}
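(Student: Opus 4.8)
The plan is an induction on $|V(G)|$ in which we peel off a single, carefully chosen vertex. Fix the partition $V(G)=A\cup B$ into two cliques; then the bipartite graph between $A$ and $B$ is a chain graph, so the sets $N(a)\cap B$ for $a\in A$ are linearly ordered by inclusion (otherwise a three-line argument produces an induced $2P_2$). Throughout, the expressions we build will be \emph{nice} with every vertex of $A$ ending with label $1$ and every vertex of $B$ ending with label $3$; a third label $2$ serves as scratch.

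The structural input is that, as long as $V(G)\neq\emptyset$, some vertex of $G$ is complete or anti-complete to the opposite side of the partition. This is trivial if $A$ or $B$ is empty, so assume neither is. Let $a\in A$ have inclusion-minimal neighbourhood in $B$. If $N(a)\cap B=\emptyset$, then $a$ is anti-complete to $B$. Otherwise pick $b\in N(a)\cap B$: for each $a'\in A$ the set $N(a')\cap B$ is comparable with $N(a)\cap B$ and, by minimality, not strictly contained in it, so $b\in N(a)\cap B\subseteq N(a')\cap B$; hence $b$ is complete to $A$.

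Now the inductive step. Choose such a vertex $v$; by symmetry (swapping the roles of labels $1$ and $3$) assume $v\in A$. Then $G'=G-v$ is again a co-bipartite chain graph whose vertex set is partitioned into the cliques $A\setminus\{v\}$ and $B$, so by induction it has a nice expression $\xi$ ending with $A\setminus\{v\}$ at label $1$, $B$ at label $3$, and using labels in $\{1,2,3\}$. Since in the final graph produced by $\xi$ no vertex carries label $2$, the expression $2(v)\oplus\xi$ is legitimate; we then apply $\eta_{1,2}$ (joining $v$ to all of $A\setminus\{v\}$, which is correct because $A$ is a clique), apply $\eta_{2,3}$ \emph{only when} $v$ is complete to $B$ (joining $v$ to all of $B$), and finish with $\rho_{2\rightarrow1}$. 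This is a nice expression for $G$ with the required labelling that uses only labels $\{1,2,3\}$. The base case $|V(G)|\le1$ is immediate, so $G$ admits a nice expression on at most three labels, and in particular a nice $4$-expression.

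The only genuine obstacle is selecting $v$ correctly: the reattachment step can connect $v$ to an entire label class but never to a proper subset of one, so it is essential that $v$ be complete or anti-complete to the \emph{whole} opposite side, which is exactly the property that the chain (staircase) structure supplies. One should also check, as above, that the scratch label $2$ never clashes, which holds because the recursive subexpression has already collapsed all of its vertices onto labels $1$ and $3$ by the time $v$ is introduced.
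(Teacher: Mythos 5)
Your proof is correct, and it takes a genuinely different route from the one in the paper. The paper's proof is an explicit bottom-up construction: after ordering the vertices so that $N_B(a_0)\subseteq\cdots\subseteq N_B(a_s)$, it first normalizes the graph --- deleting twins, which by \autoref{lem:prime} changes neither the clique-width nor the niceness of the expression, and then assuming that every prefix $\{b_1,\dots,b_j\}$ is realized as some $N_B(a_i)$, since otherwise $G$ is an induced subgraph of such a graph --- and then writes down a closed-form $4$-expression $\tau_i$ that attaches the matched pair $(a_i,b_i)$ at each step. You instead peel off a single vertex at a time, using the observation that the nested-neighbourhood structure always supplies a vertex that is complete or anti-complete to the whole opposite clique (a vertex of $A$ with inclusion-minimal neighbourhood in $B$, or any vertex of $B$ inside that minimal neighbourhood), and reattach it with one scratch label. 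This avoids both the twin-removal step and the induced-subgraph completion, and it actually delivers a nice expression on three labels, slightly stronger than the statement requires; what the paper's version buys is an explicit formula in the same style as the later hand-built expressions (e.g.\ the construction of $\sigma_i$ in \autoref{clm:S5XS4}). The only point stated loosely in your write-up is the parenthetical justification of the nested neighbourhoods: if they were not linearly ordered, one obtains an induced $2P_2$ in the bipartite \emph{complement} (equivalently, an induced $C_4$ in $G$ itself), contradicting the definition of a co-bipartite chain graph --- this is indeed the intended three-line argument and not a gap.
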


We now use a divide-and-conquer approach to show that a special graph class has a nice 4-expression.
This plays a crucial role in our proof of the main theorem (\autoref{thm:p6c4atom}).

\begin{lemma}\label{lem:recursion}
A $C_4$-free graph $G$ has a nice $4$-expression if $V(G)$ can be partitioned into two (possibly empty) subsets $A$ and $B$ that
satisfy the following conditions:
\begin{enumerate}[(i)]
\item $G[A]$ is a clique;\label{item:clique1}\\[-22pt]
\item $G[B]$ is $P_4$-free;\\[-22pt]
\item no vertex in $A$ has two non-adjacent neighbours in $B$;\label{item:c4}\\[-22pt]
\item there is no induced $P_4$ in $G$ that starts with a vertex in $A$ followed by three vertices in $B$.\label{item:P4}
\end{enumerate} 
\end{lemma}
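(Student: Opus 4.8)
The goal is to build a nice $4$-expression for $G$ under the four hypotheses. I would proceed by induction on $|B|$ (equivalently, on $|V(G)|$), using the $P_4$-free structure of $G[B]$ as the recursion scaffold and peeling off pieces of $B$ together with the part of $A$ that ``belongs'' to them.

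\textbf{Base case and the cograph decomposition of $B$.}
If $B=\emptyset$, then $G=G[A]$ is a clique, which trivially has a nice $2$-expression (label everything $i$, or if we need two labels, create one vertex with label $j$, the rest with $i$, and join). So assume $B\neq\emptyset$. Since $G[B]$ is $P_4$-free, it is a cograph, so either $G[B]$ is disconnected, or $\overline{G[B]}$ is disconnected, i.e. $B$ decomposes as a join $B=B_1\oplus\cdots$ — wait, as a join $G[B]=G[B_1]+G[B_2]$ in the complement sense, meaning $B_1$ complete to $B_2$. I would split into these two cases and, in each, let $B_1$ be one ``side'' of the decomposition and $B_2=B\setminus B_1$, both nonempty and both strictly smaller. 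The key point is to correctly split $A$. For each vertex $a\in A$, condition~(\ref{item:c4}) says $N_B(a)$ is a clique in $G$; I would like to assign $a$ to the side ($B_1$ or $B_2$) that contains $N_B(a)$, but $N_B(a)$ could straddle both sides. I expect the main structural work is to show this essentially cannot happen in a bad way: if $G[B]$ is disconnected with components $B_1,B_2$ and $a$ has a neighbour in each, then $N_B(a)$ is not a clique (neighbours in different components are non-adjacent) — contradicting~(\ref{item:c4}) unless $a$ has a neighbour in only one component, or in neither. So when $B$ is disconnected, every $a\in A$ has $N_B(a)$ contained in a single component; partition $A=A_1\cup A_2\cup A_0$ accordingly (with $A_0$ the vertices anti-complete to $B$, thrown in with either side). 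Then $G[A_i\cup B_i]$ satisfies all four hypotheses with the smaller set $B_i$, so by induction each has a nice $4$-expression with $A_i$-vertices labelled $1$ and $B_i$-vertices labelled $2$; since $A_1$ is complete to $A_2$ (both in the clique $A$) while $B_1$ is anti-complete to $B_2$ and there are no edges between $A_i$ and $B_{3-i}$ except those forced — here is where I must check the cross edges. Between the two pieces we only need: $A_1$–$A_2$ complete, $B_1$–$B_2$ empty, $A_1$–$B_2$ empty (since $N_B(a)\subseteq B_1$ for $a\in A_1$), $A_2$–$B_1$ empty. So taking the disjoint union of the two expressions and applying a single $\eta$ between the two $A$-labels (after renaming so the two $B$-labels agree and the two $A$-labels agree — but we must rename carefully using a fourth label to avoid joining $A_1$ to $B_2$) yields the result. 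This is exactly the kind of bookkeeping the statement ``$4$ labels suffice'' is built to absorb; I would use labels $\{1,2\}$ inside each recursive piece and labels $\{3,4\}$ as scratch space to relabel one piece before merging.

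\textbf{The join case — where condition~(\ref{item:P4}) earns its keep.}
The harder case is when $\overline{G[B]}$ is disconnected, i.e. $B=B_1\cup B_2$ with $B_1$ complete to $B_2$ in $G$. Now a vertex $a\in A$ can perfectly well have neighbours in both $B_1$ and $B_2$ (they are adjacent, so $N_B(a)$ can still be a clique). Here I cannot cleanly split $A$ by which side it sees. Instead I would use condition~(\ref{item:P4}): it forbids an induced $P_4$ of the form $a$–$b$–$b'$–$b''$ with $a\in A$, $b,b',b''\in B$. Combined with the join structure, I expect this forces, for each $a\in A$ with a neighbour in (say) $B_1$, that $a$ is complete to $B_2$ — because if $a$ misses some $b''\in B_2$ and sees some $b\in B_1$, and there is a non-neighbour $b'\in B_1$ of $a$ (forced since $N_B(a)\cap B_1$ is a clique and $B_1$ need not be), then $a$–$b$–$b'$?– no, $b$–$b'$ need not be an edge; let me instead use $b\in N(a)\cap B_1$, $b''\in B_2\setminus N(a)$: then $b,b''$ adjacent (join), so $a$–$b$–$b''$ is a path; extend it. The precise deduction needs care, but the moral is that~(\ref{item:P4}) pins down the neighbourhoods enough that $A$ splits into: vertices complete to $B$, vertices complete to $B_1$ and with clique neighbourhood inside $B_2$, symmetric, and vertices anti-complete to $B$. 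Recurse on $(A',B_1)$ and $(A'',B_2)$ for the appropriate parts, handle the ``complete to $B$'' vertices as a clique sitting completely over everything (a vertex universal to the $B$-side can be added cheaply — give it a fresh label, $\eta$ it to both $B$-labels and the $A$-label, then relabel), and reassemble. The main obstacle, and where I'd spend the most effort, is precisely this: proving the clean partition of $A$ in the join case from condition~(\ref{item:P4}) and $C_4$-freeness, and then verifying that at most four labels survive the reassembly — in particular that when I glue a recursive $4$-expression for $(A',B_1)$ to one for $(A'',B_2)$, the cross-adjacencies ($A'$ complete to $A''$, $B_1$ complete to $B_2$, $A'$ complete to $B_2$? — no, $A'$ is the part complete to $B$, so yes; $A''$ complete to $B_1$) can all be realised by a bounded number of $\eta$ operations on a bounded number of labels. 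I'd organise the final write-up as: (1) the base case; (2) the disconnected-$B$ case with the neighbourhood-in-one-component lemma; (3) the join case with the partition-of-$A$ lemma driven by~(\ref{item:P4}); (4) in each of (2) and (3), an explicit relabel-union-join recipe showing $4$ labels suffice and the result is nice.
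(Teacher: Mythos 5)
Your induction scheme, base case, and disconnected-$B$ case all match the paper's proof (there, too, condition (iii) forces each vertex of $A$ to see only one component, and the two pieces are glued with one join between the two $A$-labels). The genuine gap is in the join case, and it is not just unfinished bookkeeping: the structural claim you want is false for an arbitrary join split $B=B_1\cup B_2$. Take $B_1=\{b_1,b_1'\}$ and $B_2=\{b_2,b_2'\}$ both cliques and complete to each other (so $G[B]=K_4$), and let $a\in A$ be adjacent to exactly $b_1$ and $b_2$. All four hypotheses hold ($N_B(a)=\{b_1,b_2\}$ is an edge, and since $B$ is a clique there is no induced $C_4$ and no induced $P_4$ starting at $a$), yet $a$ has a neighbour on each side and is complete to neither, so your proposed four-way partition of $A$ does not exist for this split. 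Likewise your stronger guess (``a neighbour in $B_1$ forces completeness to $B_2$'') fails already for $B_1=\{b_1\}$, $B_2=\{b_2,b_2'\}$ with $b_2b_2'$ a non-edge and $a$ adjacent only to $b_1$: the path $a$--$b_1$--$b_2$ has no fourth vertex to extend it, which is exactly why the $P_4$-extension you attempt in that direction cannot be completed.

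The missing idea is a normalisation of the split before any claim about $A$ is made. Since $B_1$ is complete to $B_2$ and $G$ is $C_4$-free, at least one side is a clique; the paper takes $B_1$ to be a clique side chosen \emph{maximal}, so that every vertex of $B_2$ has a non-neighbour inside $B_2$ (if this forces $B_2=\emptyset$ you are in the co-bipartite chain base case). With that choice the key claim goes in the opposite direction to yours: every $a\in A$ with a neighbour in $B_2$ is complete to $B_1$ --- if $a$ misses $b_1\in B_1$, pick a neighbour $b\in B_2$ of $a$ and a non-neighbour $b'\in B_2$ of $b$; then $a,b,b_1,b'$ induces a $P_4$ starting in $A$ or a $C_4$, contradicting (iv) or $C_4$-freeness. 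This also changes the shape of the recursion: one recurses only on $G[(N(B_2)\cap A)\cup B_2]$, treats $(A_1\setminus N(B_2),\,B_1)$ (with $A_1=N(B_1)\cap A$) as a co-bipartite chain graph via \autoref{lem:co-bipartite chain graph}, and only afterwards attaches $A\setminus A_1$, which by the claim is anti-complete to all of $B$ and adjacent only to the rest of the clique $A$. Without the maximal-clique normalisation and this one-sided claim, the join case of your proposal cannot be repaired as sketched.
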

\begin{proof}
We use induction on $|B|$.
If $B$ contains at most one vertex, then $G$ is a co-bipartite chain graph and the lemma follows from \autoref{lem:co-bipartite chain graph}. 
Assume that $B$ contains at least two vertices.
Since $G[B]$ is $P_4$-free, either $B$ or $\overline{B}$ is disconnected \cite{Se74}.
Suppose first that $B$ is disconnected. Then  $B$ can be partitioned into two nonempty subsets $B_1$ and $B_2$
that are anti-complete to each other. 
Let $A_1=N(B_1)\cap A$ and $A_2=A\setminus A_1$.
Then $G[A_i\cup B_i]$, with partition $(A_i,B_i)$, satisfies 
conditions~\ref{item:clique1}--\ref{item:P4} for $i=1,2$. 
Note also that, by \ref{item:c4}, $A_1$ is anti-complete to $B_2$ and $A_2$ is anti-complete to $B_1$.
By the inductive hypothesis there is a nice $4$-expression $\tau_i$ for $G[A_i\cup B_i]$ in which 
all vertices in $A_i$ and $B_i$ have labels $2$ and $4$, respectively.
Now $\rho_{1\rightarrow 2}(\eta_{1,2}(\tau_1\oplus \rho_{2\rightarrow 1}\tau_2))$
is a  nice $4$-expression for $G$.
 
Suppose now that $\overline{B}$ is disconnected.  This means that $B$ can be partitioned into two subsets $B_1$ and $B_2$
that are complete to each other. Since $G$ is $C_4$-free, either $B_1$ or $B_2$ is a clique. Without loss generality, we may assume that
$B_1$ is a clique. Moreover, we choose the partition $(B_1,B_2)$ such that $B_1$ is maximal, so $B_1\neq \emptyset$. 
Then every vertex in $B_2$ is not adjacent to some vertex in $B_2$, for otherwise we could have moved such a vertex to $B_1$.
If $B_2=\emptyset$ then $G$ is a co-bipartite chain graph and so the lemma follows from  \autoref{lem:co-bipartite chain graph}. 
Therefore, we assume that $B_2\neq \emptyset$.
Let $A_1=N(B_1)\cap A$ and $A_2=A\setminus A_1$. 
Note that $A_2$ is anti-complete to $B_1$.

We claim that $N(B_2)\cap A$ is complete to $B_1$. 
Suppose, by contradiction, that $a\in N(B_2)\cap A$ and $b_1\in B_1$ are not adjacent. 
By definition, $a$ has a neighbour $b\in B_2$.  Recall that $b$ is not adjacent to some vertex $b'\in B_2$.
Now $a,b,b_1,b'$ induces either a $P_4$ or a $C_4$, depending on whether $a$ and $b'$ are adjacent. 
This contradicts \ref{item:P4} or the $C_4$-freeness of $G$.
This proves the claim. 
Since  $N(B_2)\cap A$ is complete to $B_1$, we find that $A_2$ is anti-complete to $B_2$ and $N(B_2)\cap A=N(B_2)\cap A_1$
(see \autoref{fig:connectB}).

Note that $G[(A_1\cap N(B_2))\cup B_2]$, with the partition $(A_1\cap N(B_2),B_2)$ satisfies 
conditions~\ref{item:clique1}--\ref{item:P4}.
By the inductive hypothesis there is a nice $4$-expression $\tau$ for $G[(A_1\cap N(B_2))\cup B_2]$ 
in which all vertices in $A\cap N(B_2)=A_1\cap N(B_2)$ and $B_2$ have labels $2$ and $4$, respectively.
As $A_1$ and $B_1$ are cliques and $G$ is $C_4$-free, we find that $(A_1\setminus N(B_2), B_1)$ is a co-bipartite chain graph.
It then follows from \autoref{lem:co-bipartite chain graph} that there is a nice $4$-expression $\epsilon$ 
for it in which all vertices in $A_1\setminus N(B_2)$ and $B_1$ have labels $1$ and $3$, respectively.

We now are going to use the adjacency between the different sets as displayed in \autoref{fig:connectB}.
We first deduce that 
\[\sigma=\rho_{3\rightarrow 4}(\rho_{1\rightarrow 2}(\eta_{3,4}(\eta_{2,3}(\eta_{1,2}(\epsilon\oplus \tau)))))\]
is a nice $4$-expression for $G-A_2$.  
Let $\delta$ be a $2$-expression for $A_2$ in which all vertices in $A_2$ have label $1$.
Then $\rho_{1\rightarrow 2}(\eta_{1,2}(\delta\oplus \sigma))$ is a nice $4$-expression for $G$. This completes the proof.
\end{proof}

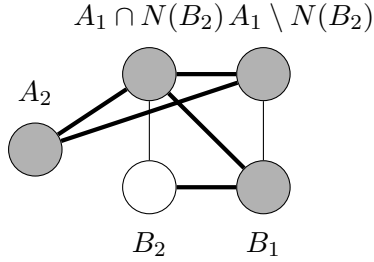
\begin{figure}[t]
\center
\begin{tikzpicture}[scale=0.5]
\tikzstyle{vertex}=[draw, circle, fill=white!100, minimum width=20pt,inner sep=2pt]
\tikzstyle{set}=[draw, circle, fill=black!30, minimum width=20pt, inner sep=2pt]

\node[set] (B1) at (3,0) {};
\node at (3,-1.5) {$B_1$};
\node[vertex] (B2) at (0,0) {};
\node at (0,-1.5) {$B_2$};
\node[set] (A1minusNB2) at (3,3) {};
\node at (4,4.5) {$A_1\setminus N(B_2)$};
\node[set] (A1intesectNB2) at (0,3) {};
\node at (0,4.5) {$A_1\cap N(B_2)$};
\node[set] (A2) at (-3,1) {};
\node at (-3,2.5) {$A_2$};

\draw[ultra thick]
(A2)--(A1minusNB2)--(A1intesectNB2)--(A2)
 (A1intesectNB2)--(B1)--(B2);
\draw
(A1intesectNB2)--(B2)
(A1minusNB2)--(B1);
\end{tikzpicture}
\caption{The case where $\overline{B}$ is disconnected. Shaded circles represent cliques.
A thick line between two sets represents that the two sets are complete;
a thin line means that the edges between the two sets are arbitrary, and no line means that the two sets are anti-complete.}\label{fig:connectB}
\end{figure} 

Let $G=(V,E)$ be a graph and $X$, $Y$, and $Z$ are three pairwise disjoint subsets of $V$.
We say that  $(X,Y,Z)$ is a {\em homogeneous triple}
if no vertex in $V\setminus (X\cup Y\cup Z)$ can distinguish any two vertices in $X$, $Y$ or $Z$.
A pair $(X,Y)$ of sets is a {\em homogeneous pair} if $(X,Y,\emptyset)$ is a homogeneous triple.
If both $X$ and $Y$ are cliques, then $(X,Y)$ is a {\em homogeneous pair of cliques}.
Note that homogeneous sets are special cases of homogeneous pairs and triples.
An $\ell$-expression for a homogeneous triple $(X,Y,Z)$ is {\em nice} 
if two vertices of $X\cup Y\cup Z$ have the same label if and only if they belong to the same set $X$, $Y$ or $Z$.
We establish a new bound on the clique-width of a graph~$G$ in terms of the number of pairwise disjoint homogenous pairs and triples of $G$. 

\begin{lemma}\label{lem:boundcw}
Let $G$ be a graph.
If $V(G)$ can be partitioned into a subset $V_0$,
with $|V_0|\ge 3$, and 
$p$ homogeneous pairs and $t$ homogeneous triples 
such that there is a 
nice $4$-expression for each homogeneous pair and a nice $6$-expression
for each homogeneous triple, then $\cw(G)\le |V_0|+2p+3t$.
\end{lemma}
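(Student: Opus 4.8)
The plan is to build a clique-width expression for $G$ by first constructing, independently, the nice expressions guaranteed for each of the $p$ homogeneous pairs and $t$ homogeneous triples, and then assembling them together with the vertices of $V_0$ using the operations in a controlled way. The key observation, already recorded in the paragraph preceding Lemma~\ref{lem:prime}, is that if $H$ is a homogeneous set and we have an $\ell_2$-expression for $G[H]$, we can substitute it for the single vertex ``$h$'' in an expression for the reduced graph $G_h$, and afterwards all of $H$ carries one label. I would generalize this substitution to homogeneous pairs and triples: because no vertex outside $X\cup Y$ distinguishes within $X$ or within $Y$, the pair $(X,Y)$ interacts with the rest of $G$ exactly as a pair of single vertices $(x,y)$ would; so one can take a nice $4$-expression for $G[X\cup Y]$ — in which $X$ and $Y$ end up with two distinct dedicated labels — and splice it in place of the two vertices $x,y$ in an expression for the ``contracted'' graph. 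The only care needed is label management: the two (resp.\ three) labels used to finish off a pair (resp.\ triple) must be temporarily reserved so that the $\eta$ and $\rho$ operations that wire the pair/triple to the outside world act on the right vertices, and then these labels can be recycled once that pair/triple is fully attached.

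More concretely, I would order the construction as follows. Start with the subgraph $G_0$ obtained from $G$ by contracting each homogeneous pair to a single vertex and each homogeneous triple to a single vertex; its vertex set has size $|V_0| + p + t$, and on its own it has clique-width at most $|V_0| + p + t$ (indeed any graph on $n$ vertices has an $n$-expression by giving every vertex its own label). Process the contracted ``pair-vertices'' and ``triple-vertices'' one at a time. When it is the turn of a pair $(X,Y)$, its contracted vertex currently holds some single label $\lambda$; rename the already-built part so that $\lambda$ is free, take the nice $4$-expression for $G[X\cup Y]$ using two fresh labels $\{a,b\}$ for $X$ and $Y$, disjoint-union it in, perform the $\eta$-joins realizing the (homogeneous, hence uniform) adjacency between $X$, between $Y$, and the rest, and finally rename $a,b$ both to $\lambda$ so that the rest of the construction sees $X\cup Y$ again as a single label-class. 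The two labels $a,b$ are now free again for the next pair; a triple is handled identically but needs three reserved labels and its nice $6$-expression. The running label count is therefore $(|V_0|+p+t)$ — the labels carried by $V_0$ and by the not-yet-expanded contracted vertices — plus at most $\max\{4,6\}$ scratch labels for whichever pair/triple is currently being expanded.

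Getting the bound down to exactly $|V_0|+2p+3t$ requires a slightly more careful accounting than the crude $|V_0|+p+t+6$ above, and this is where I expect the main work to lie. The trick is to not give each contracted pair-vertex a private label that persists throughout: instead, note that once a pair $(X,Y)$ has been expanded, it permanently occupies one label; so it is cleaner to reserve, per pair, $2$ labels (one that will become the permanent label of $X\cup Y$, plus the scratch partner needed to run its nice $4$-expression — but the nice $4$-expression needs $4$, so one reuses two global scratch labels), and per triple $3$ labels, while the $|V_0|$ vertices of $V_0$ each take one of their own. One then checks that $4 \le |V_0| + \text{(labels already committed)}$ at every step because $|V_0|\ge 3$ together with processing order ensures enough committed/among-$V_0$ labels are available to double as the scratch labels demanded by \autoref{lem:co-bipartite chain graph}-type sub-expressions. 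The hypothesis $|V_0|\ge 3$ is exactly what makes the scratch labels ``come for free'' and is the reason the bound has no additive constant. I would present this as an induction on $p+t$: the base case $p=t=0$ is the trivial $|V_0|$-expression, and the inductive step peels off one homogeneous pair or triple, applies the substitution described above, and verifies the label bound $|V_0|+2p+3t$ is maintained, using the previously-established fact (paragraph before \autoref{lem:prime}) that substituting an expression for a homogeneous set/pair/triple into an expression for the contracted graph is valid and keeps each substituted set monochromatic.
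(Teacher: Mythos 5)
Your construction hinges on treating each homogeneous pair (and triple) the way the paper treats a homogeneous \emph{set}: you contract $(X,Y)$ to a single vertex of a graph $G_0$, splice in the nice $4$-expression, and then rename the two labels $a,b$ to the one label $\lambda$ of the contracted vertex so that ``the rest of the construction sees $X\cup Y$ again as a single label-class.'' This step is the genuine gap, and it is exactly the point of the lemma. A homogeneous pair is not a module: an outside vertex cannot distinguish two vertices inside $X$, nor two inside $Y$, but it may perfectly well be complete to $X$ and anti-complete to $Y$. Consequently the contracted graph $G_0$ (one vertex per pair/triple) is not even well defined, and once you merge $a$ and $b$ into $\lambda$, any vertex or pair/triple introduced later in the construction that is adjacent to $X$ but not to $Y$ can no longer be wired up correctly by $\eta$-operations, since joins act on whole label classes. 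Because you expand the pairs one at a time, every pair except the last still has to receive edges to later pairs/triples and to $V_0$ \emph{after} its two labels have been collapsed, so the expression you build is not an expression for $G$. The label count is also a warning sign: your intermediate bound of roughly $|V_0|+p+t+6$ would, for large $p$, undercut the claimed $|V_0|+2p+3t$, which signals that the collapse to one persistent label per pair cannot be sound.

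The repair is precisely the paper's proof: after building the inside of each pair (triple) with its nice $4$-expression ($6$-expression), give each of its two (three) constituent sets its own \emph{permanent} label for the remainder of the construction --- one label for $X$ and one for $Y$ (and one for $Z$) --- and this is exactly where the $2p+3t$ term comes from. The at most six working labels are recycled from one pair/triple to the next and are absorbed without an additive constant by handling the last pair/triple with the leftover auxiliary labels and by using the $|V_0|\ge 3$ unique labels assigned to $V_0$. Since each constituent set keeps its own label and, by the definition of homogeneous pairs and triples, is complete or anti-complete to every other such set and to every vertex of $V_0$, all remaining edges of $G$ can then be inserted with join and disjoint-union operations alone. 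Your accounting of ``two labels per pair: one permanent for $X\cup Y$ plus one scratch'' repeats the same error in miniature; the two reserved labels must be permanent and must separate $X$ from $Y$.
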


\begin{proof}
We first construct the homogenous pairs and triples and the edges inside these pairs and triples one by one using nice $4$-expressions and nice $6$-expressions, respectively.
So we need at most four different labels for each homogenous pair and at most six different labels for each homogenous triple.
As soon as we have constructed a homogenous pair (triple) with its internal edges using a nice $4$-expression ($6$-expression), we introduce a new label for all vertices of each of its two (three) sets before considering the next homogenous pair or triple. We can do so, because all vertices of each set in a homogeneous pair received the same label by the definition of a nice $\ell$-expression for homogenous sets and triples.
Consequently, we may use the previous labels over and over again as auxiliary labels. 
Afterwards, we can view each set in a homogeneous pair or triple as a single vertex, each with its own unique label. 

So far we used at most $2p+3t+6$ different labels. By using the auxiliary labels as unique labels for the sets of the last pair or triple and by considering pairs before triples, we need in fact at most $2p+3t+3$ distinct labels if $t\geq 1$ and at most
$2p+2$ labels if $t=0$. We now assign a unique label to each vertex in~$V_0$ after first using all the remaining auxiliary labels.

So far we only constructed edges of $G$ that are within a homogenous pair or triple. From our labelling procedure and the definitions of homogenous pairs and triples it follows that
we can put in all the remaining edges of $G$ using only join and disjoint union operations. Hence, as $|V_0|\geq 3$, the total number of distinct labels is at most $|V_0|+2p+3t$.
\end{proof}

\subsection{Structure around a 5-Cycle}\label{subsec:C5}

Let $G=(V,E)$ be a graph and $H$ be an induced subgraph of $G$.
We partition $V\setminus V(H)$ into subsets with respect to $H$ as follows:
for any $X\subseteq V(H)$, we denote by $S(X)$ the set of vertices
in $V\setminus V(H)$ that have $X$ as their neighbourhood among $V(H)$, i.e.,
\[S(X)=\{v\in V\setminus V(H): N_{V(H)}(v)=X\}.\]
For $0\le j\le |V(H)|$, we denote by $S_j$ the set of vertices in $V\setminus V(H)$ that have exactly $j$
neighbours among $V(H)$. Note that $S_j=\bigcup_{X\subseteq V(H): |X|=j}S(X)$.
We say that a vertex in $S_j$ is a \emph{$j$-vertex}. 
Let $G$ be a $(C_4,P_6)$-free graph and $C=1,2,3,4,5$ be an induced $C_5$ in $G$.
We partition $V\setminus C$ with respect to $C$ as above. All indices below are modulo $5$.
Since $G$ is $C_4$-free, there is no vertex in $V\setminus C$ that is adjacent to vertices $i$ and $i+2$ but not to vertex $i+1$. In particular, $S(1,3)$, $S_4$, etc. are empty.
The following properties \ref{s5}-\ref{n22} of $S(X)$ were proved in \cite{HH17} using the fact that $G$ is $(C_4,P_6)$-free.
\begin{enumerate}[label=\bfseries (P\arabic*)]
\item {$S_5\cup S(i-1,i,i+1)$ is a clique}. \label{s5}

\item {$S(i)$ is complete to $S(i+2)$ and anti-complete to $S(i+1)$. 
Moreover, if neither $S(i)$ nor $S(i+2)$ are empty then both sets are cliques.}\label{s11}

\item  {$S(i,i+1)$ is complete to $S(i+1,i+2)$ and anti-complete to
$S(i+2,i+3)$. Moreover, if neither $S(i,i+1)$ nor $S(i+1,i+2)$ are empty then both sets are cliques.}\label{s22}

\item {$S(i-1,i,i+1)$ is anti-complete to $S(i+1,i+2,i+3)$.}\label{s33}

\item {$S(i)$ is anti-complete to $S(j,j+1)$ if $j\neq i+2$.
Moreover, if a vertex in $S(i+2,i+3)$ is not anti-complete to $S(i)$ then it is universal in $S(i+2,i+3)$.}\label{s12}

\item {$S(i)$ is anti-complete to $S(i+1,i+2,i+3)$.}\label{s13}

\item {$S(i-2,i+2)$ is anti-complete to $S(i-1,i,i+1)$.}\label{s23}

\item Either $S(i)$ or $S(i+1,i+2)$ is empty. By symmetry, either $S(i)$ or $S(i-1,i-2)$ is empty.\label{n12}

\item {At least one of $S(i-1,i)$, $S(i,i+1)$ and $S(i+2,i-2)$ is empty.}\label{n22}
\end{enumerate}

We now prove some further properties that are used in \autoref{lem:C5}.
\begin{enumerate}[label=\bfseries (P\arabic*)]
\setcounter{enumi}{9}
\emitem {For each connected component $A$ of $S(i)$, each vertex in $S(i-2,i-1,i)\cup S(i,i+1,i+2)$ is either complete or anti-complete to $A$.}
\label{item:p10}

{\it Proof.}
It suffices to prove the property for $i=1$. Suppose that some vertex $t\in S(4,5,1)\cup S(1,2,3)$ is neither complete nor anti-complete
to a connected component $A$ of $S(1)$. By symmetry, we may assume that $t\in S(4,5,1)$.
By the connectivity of $A$, there exists an edge $aa'$ in $A$ such that $t$ is
adjacent to $a$ but not to $a'$. Then $a',a,t,4,3,2$ induces a $P_6$, a contradiction. \e

\emitem {No vertex in $S_5$ can distinguish an edge between $S(i)$ and $S(i-2,i+2)$.}\label{item:s5nodis}

{\it Proof.}
It suffices to prove the property for $i=1$. Let $x\in S(1)$ and $y\in S(3,4)$ be adjacent. If  a vertex $u$ is adjacent to exactly one of $x$ and $y$,
then either $x,y,3,u$ or $x,y,u,1$ induces a $C_4$. \e

\emitem {If a vertex $x\in S(i-2,i+2)$ has a neighbour in $S(i-2,i-1,i)\cup S(i,i+1,i+2)$, then $x$ is complete to $S_5$.}\label{item:s25}

{\it Proof.} It suffices to prove the property for $i=1$. Suppose that $x$ is not adjacent to some $u\in S_5$.
Since $x$ has a neighbour $s\in S(1,2,3)\cup S(4,5,1)$, say $S(1,2,3)$, it follows that $s,u,4,x$ induces a $C_4$. \e

\emitem {Each vertex in $S(i-2,i+2)$ is anti-complete to either $S(i-2,i-1,i)$ or $S(i,i+1,i+2)$.}\label{item:p13}

{\it Proof.}
It suffices to prove the property for $i=1$.  Suppose that $x\in S(3,4)$ has a neighbour $s\in S(1,2,3)$ and $t\in S(4,5,1)$.
By \ref{s33}, $s$ and $t$ are not adjacent. Then $x,s,1,t$ induces a $C_4$. \e

\emitem {Each vertex in $S(i-1,i-2,i+2)$ and $S(i+1,i+2,i-2)$ is either complete or anti-complete 
to each connected component of $S(i-2,i+2)$.}\label{item:p14}

{\it Proof.}
It suffices to prove the property for $i=1$.  Suppose that $s\in S(2,3,4)\cup S(3,4,5)$ distinguishes an edge
$xy$ in $S(3,4)$, say $s$ is adjacent to $x$ but not to $y$. By symmetry, we  may assume that $s\in S(2,3,4)$.
Then $y,x,s,2,1,5$ induces a $P_6$. \e

\emitem {If both $S(i-1,i-2)$ and $S(i+1,i+2)$ are not empty, 
then each vertex in $S(i-1,i,i+1)$ is either complete or anti-complete to $S(i-1,i-2)\cup S(i+1,i+2)$.}\label{item:p15}

{\it Proof.}
It suffices to prove the property for $i=1$. Let $x\in S(2,3)$ and $y\in S(4,5)$ be two arbitrary vertices.
If $s\in S(5,1,2)$ distinguishes $x$ and $y$, say $s$ is adjacent to $x$ but not to $y$, 
then $1,s,x,3,4,y$  induces a $P_6$, since $y$ is not adjacent to $x$ by \ref{s22}. \e
\end{enumerate}

\subsection{Proof of \autoref{thm:p6c4atom}}\label{subsec:main proof}

In this section, we give a proof of \autoref{thm:p6c4atom}, which states that $(C_4,P_6)$-free graphs have clique-width 
at most~18.

A graph is \emph{chordal} if it does not contain any induced cycle of length at least~4.
The following structure of $(C_4,P_6)$-free graphs discovered by Brandst{\"a}dt and Ho\`{a}ng \cite{BH07} is of particular
importance in our proofs below.
\begin{theorem}[\cite{BH07}]\label{lem:P6C4atom}
Let $G$ be a $(C_4,P_6)$-free atom.  Then the following statements hold:
\begin{inparaenum}[(i)]
\item every induced $C_5$ is dominating;
\item if $G$ contains an induced $C_6$ which is not dominating, then $G$ is the join of
a blow-up of the Petersen graph (\autoref{fig:counterexample}) and a (possibly empty) clique.
\end{inparaenum} 
\end{theorem}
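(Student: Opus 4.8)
I would prove both parts by contradiction, using the partition of $V\setminus V(H)$ into the sets $S(X)$ relative to the cycle~$H$ at hand, and --- crucially --- the fact that an atom has no clique cutset. (We may assume $G$ is connected: a clique cutset of the component containing~$H$ would be a clique cutset of~$G$.)

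For (i), suppose an induced $C_5$, say $C=c_1c_2c_3c_4c_5$, is not dominating, so (with the $S(X)$ taken with respect to~$C$) $S(\emptyset)\neq\emptyset$. A shortest path from $S(\emptyset)$ to~$C$ is induced, hence by $P_6$-freeness has at most~$4$ edges; together with the $C_4$-freeness restrictions on neighbourhoods on~$C$ (only single vertices, edges, arcs of length~$3$, antipodal pairs, or all of~$C$ occur, since $S(c_i,c_{i+2})$ and $S_4$ are empty), this already gives control over how $S(\emptyset)$ attaches to $C\cup N(C)$. The crux is then the atom hypothesis: let $F$ be a connected component of $G[S(\emptyset)]$ and put $N=N(F)$. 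Since no vertex of~$F$ has a neighbour on~$C$, the set~$N$ is disjoint from $V(C)\cup F$ and separates~$F$ from~$C$; hence $N$ is a cutset and, $G$ being an atom, $N$ is not a clique. Pick non-adjacent $x,y\in N$; each has a neighbour in~$F$ and, lying outside $S(\emptyset)$, a neighbour on~$C$. A shortest $x$--$y$ path through~$F$ has at most~$3$ internal vertices (else its first six vertices form an induced $P_6$), and using the neighbourhood restrictions one can push this down to one or two internal vertices. From here a routine but somewhat involved case analysis --- distinguishing the number of internal vertices of the $x$--$y$ path and the adjacency pattern of the chosen $C$-neighbours of $x$ and $y$ to the path and to~$C$, extending by one or two further vertices of~$C$ when needed, and using the properties \ref{s5}--\ref{n22} --- yields in each case an induced $P_6$ or an induced $C_4$, the desired contradiction. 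Hence $C$ is dominating.

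For (ii), the skeleton is the same relative to an induced $C_6$, say $H=h_1\cdots h_6$, that is not dominating, with~$u$ a vertex having no neighbour on~$H$. One first determines, purely from $C_4$-freeness (a vertex adjacent to $h_i$ and $h_{i+2}$ but not to $h_{i+1}$ would create a $C_4$), that the only non-empty sets $S(X)$ are those with $X$ a single arc of the $6$-cycle of length $\le 4$, an antipodal pair $\{h_i,h_{i+3}\}$, or all of~$V(H)$. One then uses $P_6$-freeness, the non-dominated vertex~$u$, and again the absence of a clique cutset, to fix the adjacencies among these classes and to delete the remaining possibilities. The target statement is: the set $Q$ of vertices of~$G$ that are either universal or adjacent to all of~$V(H)$ is a clique; $G$ is the join of $Q$ and $G-Q$; and in $G-Q$ the vertex set is the union of exactly ten homogeneous sets --- six of the form $\{h_i\}\cup S(\{h_{i-1},h_i,h_{i+1}\})$ and four of the form $S(\emptyset)$ together with the three antipodal-pair classes --- whose mutual adjacency (the skeleton of $G-Q$) is forced to be that of the Petersen graph. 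Here one uses that an induced $6$-cycle of the Petersen graph leaves exactly one of its ten vertices undominated, which is the role of~$u$. Consequently $G-Q$ is a blow-up of the Petersen graph and $G$ is its join with the clique~$Q$. (The converse, that every such~$G$ is a $(C_4,P_6)$-free atom, is a quick check, so the hypothesis is not vacuous.)

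The main obstacle is the case analysis in (ii): systematically excluding every partial-neighbourhood configuration on~$H$ other than the Petersen pattern, while tracking the interactions among the many classes $S(X)$ and the global ``no clique cutset'' condition, is long and delicate --- which is why we quote the result from~\cite{BH07} rather than reprove it. Part~(i) is comparatively routine once the cutset $N(F)$ is in hand; the only real nuisance there is the bookkeeping for the few degenerate positions of the $C$-neighbours of $x$ and~$y$.
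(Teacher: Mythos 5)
This statement is not proved in the paper at all: it is quoted verbatim from Brandst\"adt and Ho\`ang~\cite{BH07}, and the paper's ``proof'' is the citation. Your treatment ends the same way---you sketch a strategy and then declare that the decisive case analyses are ``routine'' (part (i)) or ``long and delicate'' (part (ii)) and fall back on citing~\cite{BH07}---so in effect you and the paper handle the statement identically, and your identification of the key mechanism is the right one: for a non-dominated component $F$ of $S(\emptyset)$, the set $N(F)$ is disjoint from $V(C)\cup S(\emptyset)$, separates $F$ from the cycle, and hence (atom hypothesis) contains two non-adjacent vertices $x,y$, each with a neighbour in $F$ and on the cycle; the target structure you state for (ii) (classes $\{h_i\}\cup S(h_{i-1},h_i,h_{i+1})$, the three antipodal classes, $S(\emptyset)$, and the clique of vertices complete to the $C_6$) is indeed the Petersen blow-up joined with a clique.

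Viewed as a standalone proof, however, the proposal has real gaps exactly where you wave your hands. In (i), after bounding the induced $x$--$y$ path through $F$ by $P_6$-freeness, the natural extension by a cycle-neighbour $c$ of $x$ only yields a contradiction when $c$ is non-adjacent to $y$; if $c$ is adjacent to $y$ you get an induced $C_6$, which is not forbidden, and eliminating this configuration (and the shorter-path cases) is the actual content of the argument, not bookkeeping. In (ii) you establish only the $C_4$-forced classification of the sets $S(X)$ and then assert the conclusion; none of the adjacencies between the classes, the claim that the ``complete to $V(H)$'' vertices form a clique joined to everything, or the forcing of the Petersen skeleton is derived. Since the theorem is explicitly imported from~\cite{BH07}, deferring to that reference is legitimate---but one should then present the sketch as motivation for the citation rather than as a proof.
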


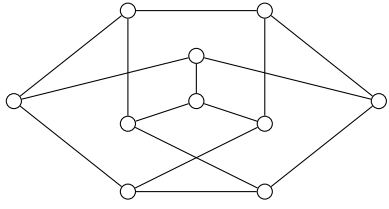
\begin{figure}[tb]
\centering
\begin{tikzpicture}[scale=0.6]
\tikzstyle{vertex}=[draw, circle, fill=white!100, minimum width=4pt,inner sep=2pt]

\node[vertex] (v1) at (-1.5,2) {};
\node[vertex] (v2) at (1.5,2) {};
\node[vertex] (v3) at (4,0) {};
\node[vertex] (v4) at (1.5,-2) {};
\node[vertex] (v5) at (-1.5,-2) {};
\node[vertex] (v6) at (-4,0) {};
\draw (v1)--(v2)--(v3)--(v4)--(v5)--(v6)--(v1);

\node[vertex] (v14) at (-1.5,-0.5) {};
\draw (v14)--(v1) (v14)--(v4);
\node[vertex] (v25) at (1.5,-0.5) {};
\draw (v25)--(v2) (v25)--(v5);
\node[vertex] (v36) at (0,1) {};
\draw (v36)--(v3) (v36)--(v6);
\node[vertex] (c) at (0,0) {};
\draw (c)--(v14) (c)--(v25) (c)--(v36);
\end{tikzpicture}
\caption{The Petersen graph.}
\label{fig:counterexample}
\end{figure}

We say that an atom is \emph{strong} if it has no pair of twin vertices or universal vertices.
Note that a pair of twin vertices and a universal vertex in a graph give rise to two special kinds of
proper homogeneous sets such that one of the factors decomposed by these homogeneous sets is a clique.
Therefore, removing  twin vertices and universal vertices does not change the clique-width of the graph by \autoref{lem:prime}.
So, to  prove \autoref{thm:p6c4atom} it suffices to prove the theorem for strong atoms.

We follow the approach of~\cite{GH17}. 
In \cite{GH17}, the  first and second author showed how to derive a useful decomposition
theorem for $(C_4,P_6)$-free atoms by eliminating a sequence $F_1$, $C_6$, $F_2$ and $C_5$ 
(see \autoref{fig:F1F2} for the graphs $F_1$ and $F_2$)
of induced subgraphs and then employing Dirac's classical theorem~\cite{Di61} on  chordal graphs.
Here we adopt the same strategy and show in \autoref{lem:F1}--\autoref{lem:C5} below that
if a $(C_4,P_6)$-free strong atom $G$ contains an induced $C_5$ or $C_6$, then it has clique-width at most 18.
The remaining case is therefore that $G$ is
a chordal atom, and so $G$ is a clique by Dirac's theorem~\cite{Di61}.
Since cliques have clique-width $2$, \autoref{thm:p6c4atom} follows.
It turns out that we can easily prove  \autoref{lem:F1} and \autoref{lem:C6} via the framework formulated
in \autoref{lem:boundcw} using the structure
of the graphs discovered in \cite{GH17}. The difficulty is, however, that we have to extend
the structural analysis in \cite{GH17} extensively for \autoref{lem:F2} and \autoref{lem:C5} and provide
new insights on bounding the clique-width of certain special graphs using divide-and-conquer (see \autoref{lem:recursion}).

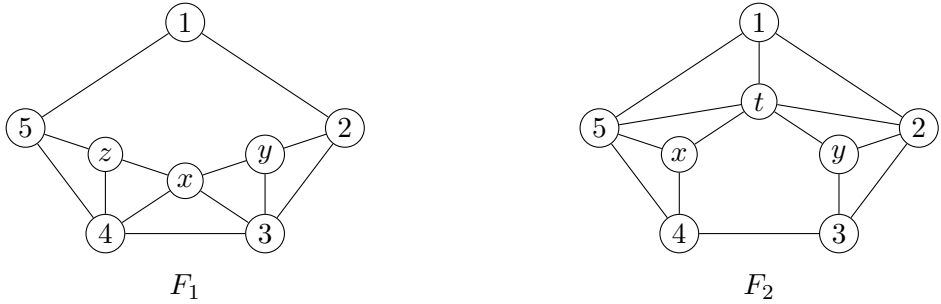
\begin{figure}[tb]
\centering
\begin{subfigure}{.5\textwidth}
\centering
\begin{tikzpicture}[scale=0.7]
\tikzstyle{vertex}=[draw, circle, fill=white!100, minimum width=4pt,inner sep=2pt]

\node [vertex] (v1) at (0,3) {$1$};
\node [vertex] (v2) at (3,1) {$2$};
\node [vertex] (v3) at (1.5,-1) {$3$};
\node [vertex] (v4) at (-1.5,-1) {$4$};
\node [vertex] (v5) at (-3,1) {$5$};
\draw 
(v1)--(v2)--(v3)--(v4)--(v5)--(v1);

\node[vertex] (y) at (1.5,0.5) {$y$};
\draw (v2)--(y) (v3)--(y);

\node[vertex] (z) at (-1.5,0.5) {$z$};
\draw (v4)--(z) (v5)--(z);

\node [vertex] (x) at (0,0) {$x$};
\draw (x)--(v3)   (x)--(v4)  (x)--(y)  (x)--(z);

\node at (0,-2) {$F_1$};
\end{tikzpicture}
\end{subfigure}%
\begin{subfigure}{.5\textwidth}
\centering
\begin{tikzpicture}[scale=0.7]
\tikzstyle{vertex}=[draw, circle, fill=white!100, minimum width=4pt,inner sep=2pt]

\node [vertex] (v1) at (0,3) {$1$};
\node [vertex] (v2) at (3,1) {$2$};
\node [vertex] (v3) at (1.5,-1) {$3$};
\node [vertex] (v4) at (-1.5,-1) {$4$};
\node [vertex] (v5) at (-3,1) {$5$};
\draw 
(v1)--(v2)--(v3)--(v4)--(v5)--(v1);

\node[vertex] (y) at (1.5,0.5) {$y$};
\draw (v2)--(y) (v3)--(y);

\node[vertex] (z) at (-1.5,0.5) {$x$};
\draw (v4)--(z) (v5)--(z);

\node [vertex] (t) at (0,1.5) {$t$};
\draw (t)--(v5)   (t)--(v1)  (t)--(v2)  (t)--(y)  (t)--(z);

\node at (0,-2) {$F_2$};
\end{tikzpicture}
\end{subfigure}
\caption{Two special graphs $F_1$ and $F_2$.}
\label{fig:F1F2}
\end{figure}

\begin{lemma}\label{lem:F1}
If a $(C_4,P_6)$-free strong atom $G$ contains an induced $F_1$, then $G$ has clique-width at most $13$.
\end{lemma}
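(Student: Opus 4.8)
The plan is to take the induced copy of $F_1$ in $G$, which contains an induced $C_5$ on vertices $1,2,3,4,5$, and work with the partition of $V(G)\setminus C$ into the sets $S(X)$ introduced in~\autoref{subsec:C5}. By \autoref{lem:P6C4atom}(i), this $C_5$ is dominating, so $V(G)\setminus C$ is covered by the sets $S(X)$ with $X\neq\emptyset$; moreover $C_4$-freeness already kills $S_4$, $S(i,i+2)$, and (by the ``either/or'' properties \ref{n12}, \ref{n22}) many of the two-vertex and three-vertex sets. The presence of the extra vertices $x,y,z$ of $F_1$ pins down which of the surviving sets are non-empty: $y\in S(2,3)$, $z\in S(4,5)$, $x\in S(3,4)\cup\{\text{some three-set}\}$ depending on one's labelling of $F_1$, and this forces (via \ref{n22} applied to the relevant triple of two-sets, and via \ref{n12}) that all single-vertex sets $S(i)$ are empty and that essentially only a bounded number of the $S(X)$ can be non-empty. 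So the first step is a careful case check establishing that, given an induced $F_1$, the vertex set decomposes as $C$ together with at most a constant number of non-empty sets $S(X)$, each of which is a clique by \ref{s5}, \ref{s11}, \ref{s22}.

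The second step is to recognise each surviving $S(X)$, or a small union of them, as a \emph{homogeneous set, pair, or triple} of $G$, and to exhibit a nice $4$- or $6$-expression for it. Homogeneity of a single $S(X)$ is immediate from the definition of $S(X)$ together with the adjacency properties \ref{s5}--\ref{n22} and \ref{item:p10}--\ref{item:p15}: no outside vertex can split $S(X)$ because its neighbourhood into $S(X)$ is governed entirely by which $S(Y)$ it lies in. Where an individual $S(X)$ is not quite homogeneous (typically an $S(i-2,i+2)$ set, which by \ref{item:p13}, \ref{item:p14} can be split by vertices of the adjacent three-sets along its connected components), I would instead group it with those three-sets into a homogeneous pair or triple, using \ref{item:p13}--\ref{item:p15} to check that from outside the group nothing distinguishes; each such group is co-bipartite-chain-like or $P_4$-free-plus-clique, so \autoref{lem:co-bipartite chain graph} or \autoref{lem:recursion} supplies the required nice $4$-expression, and a homogeneous triple needs at most a nice $6$-expression (handle its three sets one at a time, each being a clique, so trivially). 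The cycle $C$ itself, of five vertices, is taken as the core set $V_0$ with $|V_0|=5\geq 3$.

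The third step is purely bookkeeping: feed the decomposition into \autoref{lem:boundcw}. With $|V_0|=5$ and at most $p$ homogeneous pairs and $t$ homogeneous triples among the surviving sets, the bound is $\cw(G)\le 5+2p+3t$, and the case analysis of step~1 must be set up so that $2p+3t\le 8$, giving $\cw(G)\le 13$. Concretely I expect something like: the $C_5$ as $V_0$; a couple of clique single-$S(X)$'s and two-$S(X)$'s folded into homogeneous sets (each costing $2$ in the ``pair'' tally, since a homogeneous set is a degenerate homogeneous pair); and one homogeneous triple formed from an $S(i-2,i+2)$ with its two neighbouring three-sets; counting these so that the weighted total is $8$. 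Twin- and universal-vertex freeness of the strong atom $G$ is what makes the reduction to this bounded configuration legitimate (it rules out the $S_5$ and $S(i-1,i,i+1)$ cliques growing without being absorbable), and was already used to pass to strong atoms before these lemmas.

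\textbf{Main obstacle.} The delicate part is not the clique-width arithmetic but step~1: showing that an induced $F_1$ genuinely forces all the potentially large sets (the triples $S(i-1,i,i+1)$, the set $S_5$, and the $S(i-2,i+2)$'s) to be either empty or handled, so that only a constant number of homogeneous pieces remain. This requires chasing several induced-$P_6$ and induced-$C_4$ arguments off the vertices $x,y,z$ of $F_1$ — exactly the kind of local structural analysis that properties \ref{item:p10}--\ref{item:p15} were designed to support, and which will need to be carried out by hand for each surviving configuration.
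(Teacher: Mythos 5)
Your overall skeleton is the paper's: partition $V(G)$ around the $C_5$ underlying the $F_1$, organise the leftover sets into homogeneous pieces with nice expressions, and finish with \autoref{lem:boundcw} using $|V_0|=5$. But the proposal has genuine gaps at exactly the points where the paper invokes nontrivial structure. First, you never actually deal with the case $S_5\neq\emptyset$. Your substitute claim --- that twin- and universal-vertex freeness ``rules out the $S_5$ and $S(i-1,i,i+1)$ cliques growing'' --- is not correct: a $5$-vertex need not be universal (it can miss vertices of $S_2$ or $S_3$), two $5$-vertices need not be twins, and the sets $S(i-1,i,i+1)$ can be arbitrarily large even in a strong atom (they are not homogeneous cliques on their own, since vertices of $S(3,4)$ can distinguish vertices of $S(1,2,3)$, for instance). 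The paper closes this case by citing the structural result of \cite{GH17} that, in the presence of an induced $F_1$ with $S_5\neq\emptyset$, $G$ is a blow-up of the $9$-vertex graph $F_3$; only then does twin-freeness collapse $G$ to $F_3$ itself, with clique-width at most $9$. Without that (or an equivalent argument), this branch of your proof is missing.

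Second, in the case $S_5=\emptyset$ your step~2 is both too optimistic and miscounted. Homogeneity of a single $S(X)$ is not ``immediate'': the large sets $S(1,2,3)$, $S(4,5,1)$, $S(2,3,4)$, $S(3,4,5)$ and $S(3,4)$ are precisely the ones that outside vertices can split, which is why the paper (using the structure of \autoref{fig:noS5} from \cite{GH17}, including the facts that $S_1=S(1,2)=S(5,1)=\emptyset$ and that $S(5,1,2)\cup\{1\}$ is a homogeneous clique, forcing $S(5,1,2)=\emptyset$ by strongness) splits $S(3,4)$ into $X=N(S(4,5,1))\cap S(3,4)$ and $Y=S(3,4)\setminus X$ and verifies that $(X,S(4,5,1))$, $(Y,S(1,2,3))$, $(S(2,3),S(3,4,5))$ and $(S(4,5),S(2,3,4))$ are homogeneous pairs of \emph{cliques}; each is a co-bipartite chain graph handled by \autoref{lem:co-bipartite chain graph}, so $p=4$, $t=0$ and $\cw(G)\le 5+8=13$. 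No homogeneous triple and no use of \autoref{lem:recursion} is needed here, so your anticipated ``triple from an $S(i-2,i+2)$ with its two neighbouring three-sets'' does not match, and with a triple in place of pairs you would have to re-verify the budget $2p+3t\le 8$ from scratch. Since you yourself flag that the structural case analysis of step~1 has not been carried out, the proposal as it stands is a plausible plan rather than a proof: the decisive inputs (the $F_3$ blow-up result for $S_5\neq\emptyset$, the \autoref{fig:noS5} structure, and the verification of the four homogeneous pairs of cliques) are exactly what is missing.
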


\begin{proof}
Let $G$ be a $(C_4,P_6)$-free strong atom that contains an induced subgraph $H$ that is isomorphic to $F_1$
with $V(H)=\{1,2,3,4,5,x,y,z\}$ where $1,2,3,4,5,1$ induces the \emph{underlying} 5-cycle $C$
of $F_1$ and $x$ is adjacent to $3$ and $4$, $y$ is adjacent to $2$ and $3$, 
$z$ is adjacent to $4$ and $5$, and $x$ is adjacent to $y$ and $z$, see \autoref{fig:F1F2}. 
We partition $V(G)$ with respect to $C$.
We choose $H$ such that $|S_2|$ maximized.
Note that $x\in S(3,4)$, $y\in S(2,3)$ and $z\in S(4,5)$.
All indices below are modulo $5$. Since $G$ is an atom, it follows from
\autoref{lem:P6C4atom} that $S_0=\emptyset$. Moreover,
it follows immediately from the $(C_4,P_6)$-freeness of $G$
that~$V(G)=C\cup  S_1\cup \bigcup_{i=1}^{5}S(i,i+1)\cup \bigcup_{i=1}^{5}S(i-1,i,i+1)\cup S_5$.
If $S_5\neq \emptyset$, then  $G$ is a blow-up of the graph $F_3$ (see \autoref{fig:F3})  \cite{GH17}.
Since $G$ contains no twin vertices, $G$ is isomorphic to $F_3$ and so  has clique-width at most $9$.
If $S_5=\emptyset$ then $G$ has the structure prescribed in \autoref{fig:noS5}  \cite{GH17}.  
Note that $S(5,1,2)\cup \{1\}$ is a homogeneous clique in $G$ and so $S(5,1,2)=\emptyset$.
We partition $S(3,4)$ into two subsets $X=\{x\in S(3,4): x \textrm{ has a neighbour in }S(4,5,1)\}$
and $Y=S(3,4)\setminus X$. Note that $Y$ is anti-complete to $S(4,5,1)$. In addition,
$X$ is anti-complete to $S(1,2,3)$ since $G$ is $C_4$-free.
It is routine to check that each of $(X,S(4,5,1))$, $(Y,S(1,2,3))$, $(S(2,3),S(3,4,5))$ and $(S(4,5),S(2,3,4))$ is
a homogeneous pair of cliques in $G$.
Now $V(G)$ is partitioned into a subset $C$ of size $5$ and four homogeneous pairs of cliques. 
Since each pair of homogeneous cliques induces a co-bipartite chain graph and so has a nice 4-expression 
by \autoref{lem:co-bipartite chain graph}. So, $\cw(G)\le |V_0|+2\times 4=13$ by  \autoref{lem:boundcw}.
\end{proof}

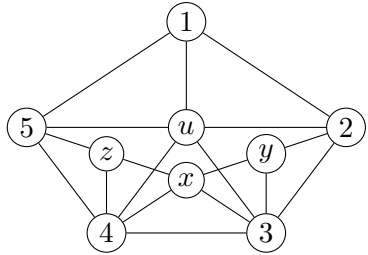
\begin{figure}[h!]
\centering
\begin{tikzpicture}[scale=0.7]
\tikzstyle{vertex}=[draw, circle, fill=white!100, minimum width=4pt,inner sep=2pt]

\node [vertex] (v1) at (0,3) {$1$};
\node [vertex] (v2) at (3,1) {$2$};
\node [vertex] (v3) at (1.5,-1) {$3$};
\node [vertex] (v4) at (-1.5,-1) {$4$};
\node [vertex] (v5) at (-3,1) {$5$};
\node[vertex] (u) at (0,1) {$u$};
\draw 
(v1)--(v2)--(v3)--(v4)--(v5)--(v1);

\node[vertex] (y) at (1.5,0.5) {$y$};
\draw (v2)--(y) (v3)--(y);

\node[vertex] (z) at (-1.5,0.5) {$z$};
\draw (v4)--(z) (v5)--(z);

\node [vertex] (x) at (0,0) {$x$};
\draw (x)--(v3)   (x)--(v4)  (x)--(y)  (x)--(z);

\draw
(u)--(v1) (u)--(v2) (u)--(v3) (u)--(v4) (u)--(v5);

\end{tikzpicture}
\caption{The graph $F_3$.}
\label{fig:F3}
\end{figure}

\begin{figure}[h!]
\center
\begin{tikzpicture}[scale=0.5]
\tikzstyle{vertex}=[draw, circle, fill=white!100, minimum width=4pt,inner sep=2pt]
\tikzstyle{set}=[draw, circle, minimum width=1pt, inner sep=1pt]

\node [set, blue] (v1) at (0,3) {$1$};
\node [set, blue] (v2) at (6,0.5) {$2$};
\node [set, blue] (v3) at (3,-3) {$3$};
\node [set,blue] (v4) at (-3,-3) {$4$};
\node [set,blue] (v5) at (-6,0.5) {$5$};

\node [set] (x) at (0,-1.5) {$S(3,4)$};
\node[set] (y) at (2.7,0) {$S(2,3)$};
\node[set] (z) at (-2.7,0) {$S(4,5)$};

\node[set] (t1) at (0,6) {$S(5,1,2)$};
\node[set] (t2) at (9, 0) {$S(1,2,3)$};
\node[set] (t5) at (-9,0) {$S(4,5,1)$};
\node[set] (t3) at (5,-5) {$S(2,3,4)$};
\node[set] (t4) at (-5,-5) {$S(3,4,5)$};

\draw [blue]
(v1)--(v2)--(v3)--(v4)--(v5)--(v1);
\draw[ultra thick]
(x)--(v3)
(x)--(v4)
(x)--(y)
(v2)--(y)
(v3)--(y)
(x)--(z)
(v4)--(z)
(v5)--(z)

(t1)--(v5) (t1)--(v1) (t1)--(v2)
(t2)--(v1) (t2)--(v2) (t2)--(v3)  (t2)--(y) (t2)--(t1)
(t5)--(v4) (t5)--(v5) (t5)--(v1) (t5)--(z) (t5)--(t1)
(t3)--(v2) (t3)--(v3) (t3)--(v4) (t3)--(y) (t3)--(x) (t3)--(t2)
(t4)--(v3) (t4)--(v4) (t4)--(v5) (t4)--(z) (t4)--(x) (t4)--(t5) (t4)--(t3);       

\draw
(t2)--(x)
(t5)--(x);
\draw[bend left]
(t3)--(z)
(t4)--(y);
\end{tikzpicture}
\caption{The structure of $G$. A thick line between two sets represents that the two sets are complete, and
a thin line represents that the edges between the two sets can be arbitrary.  Two sets are anti-complete
if there is no line between them.}\label{fig:noS5}
\end{figure}
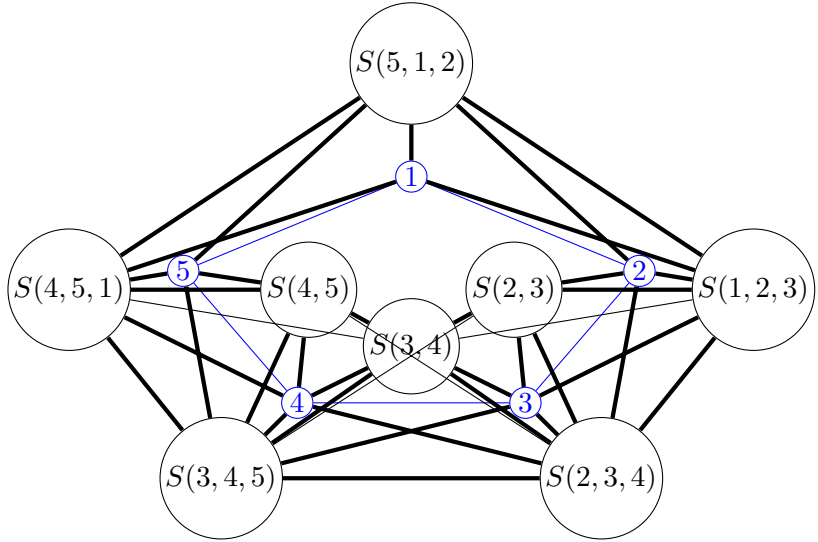

\begin{lemma}\label{lem:C6}
If a $(C_4,F_1,P_6)$-free strong atom $G$ contains an induced $C_6$, then $G$ has clique-width at most $13$.
\end{lemma}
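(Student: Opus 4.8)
The plan is to mirror the proof of \autoref{lem:F1}: take a $(C_4,F_1,P_6)$-free strong atom $G$ containing an induced $C_6$, and analyse its structure relative to that hexagon. First I would invoke \autoref{lem:P6C4atom}(ii). If the induced $C_6$ is \emph{not} dominating, then $G$ is the join of a blow-up of the Petersen graph and a clique; since $G$ is a strong atom it has no twins and no universal vertices, so the blow-up is trivial and the clique is empty, leaving $G$ isomorphic to the Petersen graph, which has bounded clique-width (indeed a small constant, well under $13$). So I may assume every induced $C_6$ in $G$ is dominating. Fix such a hexagon $C=1,2,3,4,5,6$ and partition $V(G)\setminus C$ into the sets $S(X)$ for $X\subseteq V(C)$ as in \autoref{subsec:C5}. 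Since $C$ is dominating, $S_0=\emptyset$, and $C_4$-freeness kills many of the $S(X)$: no vertex can be adjacent to two vertices of $C$ at distance $2$ along the hexagon while missing the vertex between them, so only sets $S(X)$ with $X$ an ``interval'' of the cycle (of length $1$, $2$, or $3$) or an antipodal/near-antipodal pattern survive, and these can be further constrained.

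The core of the argument is then a case analysis, completely analogous to the one in \autoref{lem:F1}, showing that $V(G)$ decomposes into $V(C)$ (six vertices) together with a bounded number of homogeneous pairs of cliques — at most four of them — so that \autoref{lem:boundcw} together with \autoref{lem:co-bipartite chain graph} yields $\cw(G)\le 6 + 2\cdot 4 = 14$; to land on $13$ I would need to show that in fact at most three full homogeneous pairs and one homogeneous set (a single clique, contributing only $1$ to the bound) suffice, or that one of the pairs can be absorbed into $V(C)$-type labels, or more simply re-examine the counting. The key structural inputs are: (a) by $C_4$-freeness each $S(i,i+1)$ and each $S(i-1,i,i+1)$ is a clique whenever nonempty, and adjacent such sets are complete while non-adjacent ones are anti-complete (the hexagon analogues of \ref{s5}--\ref{s33}); (b) the $F_1$-freeness hypothesis is what prevents a ``bad'' three-chord configuration that would otherwise force an induced $F_1$, allowing us to conclude that certain $S(X)$ are empty or that certain sets are complete/anti-complete to each other; (c) $P_6$-freeness forbids long induced paths threading through $C$ and a pendant edge inside some $S(X)$, which is the standard way one shows each remaining $S(X)$ is a clique and that a vertex outside is complete-or-anticomplete to a connected component — exactly the style of \ref{item:p10} and \ref{item:p14}. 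The homogeneity of each candidate pair (no outside vertex distinguishes two vertices in one of its parts) follows from these complete/anti-complete statements, since every vertex of $G$ lies in $C$ or in one of the $S(X)$, and we will have pinned down the adjacency between every pair of these blocks.

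I expect the main obstacle to be the case bookkeeping: unlike the $C_5$ case, a hexagon has more surviving ``near-antipodal'' patterns ($S(i,i+3)$, $S(i-1,i,i+2)$, etc.) and the $C_4$/$P_6$/$F_1$ constraints interact in more ways, so verifying that \emph{every} nonempty $S(X)$ either is absorbed into $V(C)$ as a homogeneous clique attached to a single hexagon vertex, or pairs up with exactly one partner into a homogeneous pair of cliques, and that no two such pairs overlap, requires care. A secondary subtlety is ensuring we genuinely get a strong atom reduction in the non-dominating-$C_6$ branch: I must check that a join of a Petersen blow-up with a clique, once we delete twins and universal vertices, really collapses to the bare Petersen graph — the clique part consists entirely of universal vertices (it is complete to everything) so it vanishes, and within each blow-up class the vertices are true twins so each class shrinks to a single vertex. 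Finally, I would double-check the arithmetic in \autoref{lem:boundcw}: with $|V_0|=6$ and the homogeneous blocks being pairs (each a co-bipartite chain graph with a nice $4$-expression) we get $6+2p$, so obtaining the stated bound $13$ forces $p\le 3$ together with possibly one extra homogeneous \emph{set} (clique) contributing $1$; the write-up should make clear why the decomposition never needs a fourth genuine pair, or otherwise state the honest constant.
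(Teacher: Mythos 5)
Your treatment of the non-dominating case is correct and matches the paper: by \autoref{lem:P6C4atom}(ii), a strong atom collapses to the Petersen graph, whose clique-width is a small constant. The gap is in the dominating case, which is where all the content of \autoref{lem:C6} lies. You propose to redo, around the hexagon, a structural case analysis ``completely analogous'' to the one in \autoref{lem:F1}, but you never carry it out: you do not determine which sets $S(X)$ survive for a $C_6$ (including the antipodal and near-antipodal patterns you yourself flag), you do not establish the complete/anti-complete relations between them, and you do not exhibit the claimed decomposition into $V(C)$ plus a bounded number of homogeneous pairs of cliques. Deferring exactly this bookkeeping with ``I expect the main obstacle to be the case bookkeeping'' is deferring the proof itself. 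Moreover, even the outcome you conjecture (four homogeneous pairs) would only give $6+2\cdot 4=14$ via \autoref{lem:boundcw}, and you explicitly concede you do not know how to reach the stated bound of $13$, offering instead to ``state the honest constant.'' So the proposal does not prove the lemma as stated.

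The missing idea is that no fresh case analysis is needed here: the paper simply invokes the structural result of Gaspers and Huang~\cite{GH17} that a $(C_4,P_6)$-free atom with a dominating induced $C_6$ is a blow-up of a graph on at most $13$ vertices, which immediately yields clique-width at most $13$ (the blow-up classes are classes of true twins, so they can each carry a single label; alternatively, since $G$ is strong it has no twins and is itself a graph on at most $13$ vertices). Your decomposition-into-homogeneous-pairs framework is the right tool for \autoref{lem:F1}, \autoref{lem:F2} and \autoref{lem:C5}, but for \autoref{lem:C6} the paper's route is both shorter and sharper; if you insist on a self-contained hexagon analysis you would have to actually derive the hexagon analogues of the $C_5$ properties and then either reprove the blow-up statement of~\cite{GH17} or tighten your pair count, neither of which your write-up does.
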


\begin{proof}
Let $C=1,2,3,4,5,6,1$ be an induced six-cycle of $G$. We partition $V(G)$
with respect to $C$. If $C$ is not dominating, then it follows from \autoref{lem:P6C4atom} that $G$ is
the join of a blow-up of the Petersen graph and a (possibly empty) clique. 
Since $G$ has no twin vertices or universal vertices, $G$ is isomorphic to the Petersen graph
and so $G$ has clique-width at most $10$.
In the following, we assume that $C$ is dominating, i.e., $S_0= \emptyset$. 
It was shown in \cite{GH17} that $G$ is a blow-up of a graph of order at most $13$.
Therefore,  $G$ has clique-width at most $13$.
\end{proof}

\begin{lemma}\label{lem:F2}
If a $(C_4,C_6,F_1,P_6)$-free strong atom $G$ contains an induced $F_2$, then $G$ has clique-width at most $14$.
\end{lemma}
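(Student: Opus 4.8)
The plan is to follow the pattern of \autoref{lem:F1} and \autoref{lem:C6}. We partition $V(G)$ relative to the induced $5$-cycle underlying the given $F_2$, use the excluded subgraphs $C_4,C_6,F_1,P_6$ to cut down the list of non-empty attachment sets $S(X)$, and then exhibit a partition of $V(G)$ into a set $V_0$ with $|V_0|=5$, three homogeneous pairs of cliques (each with a nice $4$-expression), and one homogeneous triple (with a nice $6$-expression). Then \autoref{lem:boundcw} gives $\cw(G)\le |V_0|+2p+3q\le 5+2\cdot 3+3\cdot 1=14$, and \autoref{lem:prime} transfers the bound to $G$ itself. The extra homogeneous triple, as opposed to the four homogeneous pairs of cliques that sufficed in \autoref{lem:F1}, is forced by the defining feature of the $F_2$ configuration: here the set $S(5,1,2)$ is non-empty, whereas in the $F_1$ case it was shown to be empty.

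Concretely, write the given induced $F_2$ as the induced $C_5$ $C=1,2,3,4,5$ together with $y\in S(2,3)$, $x\in S(4,5)$ and $t\in S(5,1,2)$ with $ty,tx\in E(G)$, and partition $V(G)$ with respect to $C$, choosing the copy of $F_2$ (equivalently $C$) so that $|S_2|$ is as large as possible. Since $G$ is an atom and $C$ an induced $C_5$, \autoref{lem:P6C4atom}(i) yields $S_0=\emptyset$, and $(C_4,P_6)$-freeness then forces $V(G)=C\cup S_1\cup\bigcup_i S(i,i+1)\cup\bigcup_i S(i-1,i,i+1)\cup S_5$, exactly as in \autoref{lem:F1}. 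Because $y\in S(2,3)$ and $x\in S(4,5)$, property \ref{n12} gives $S(1)=S(3)=S(4)=\emptyset$, so $S_1\subseteq S(2)\cup S(5)$. The case $S_5\neq\emptyset$ is dispatched as in \autoref{lem:F1}: using \ref{s5}, \ref{s23}, \ref{item:s25} and the $S_5$-analysis of \cite{GH17}, $G$ is a blow-up of a fixed graph of bounded order, hence, having no twins or universal vertices (it is a strong atom), isomorphic to it and of bounded clique-width. So we may assume $S_5=\emptyset$ while $S(5,1,2)\neq\emptyset$.

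The remaining work is structural. The first objective is to show that $S(5,1,2)$, together with $S(1,2,3)$, $S(4,5,1)$, the $2$-vertex ``arms'' $S(5,1),S(1,2)$, and the leftover singletons in $S(2)\cup S(5)$, interacts with the rest of $G$ in a homogeneous fashion, so that it collapses to a single homogeneous triple admitting a nice $6$-expression; this is where $F_1$-freeness (sharpened by \ref{item:p13}), $C_6$-freeness and properties \ref{s11}--\ref{n22} and \ref{item:p10}--\ref{item:p15} enter, in the same spirit as the elimination of $S(5,1,2)$ in \autoref{lem:F1}, but now retaining it. For the remaining sets we proceed as in \autoref{lem:F1}: split $S(3,4)$ according to whether a vertex has a neighbour in $S(4,5,1)$, and pair each $2$-vertex set with the $3$-vertex set it is complete to, obtaining homogeneous pairs; those that are co-bipartite chain graphs get a nice $4$-expression from \autoref{lem:co-bipartite chain graph}. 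Where a resulting piece fails to be a co-bipartite chain graph — this can happen because $S(2)$ and $S(5)$ need not be cliques — we apply the divide-and-conquer \autoref{lem:recursion}, with (a subset of) one of the $3$-vertex cliques $S(i-1,i,i+1)$ in the role of $A$ and a $P_4$-free graph built from $S(i)$ in the role of $B$; here conditions \ref{item:clique1}--\ref{item:P4} follow from $C_4$-freeness and \ref{item:p10}, using the fact that an induced $P_4$ starting in $A\subseteq S(i-1,i,i+1)$ and continuing with three vertices of $B\subseteq S(i)$ would extend, via $i+1$ and $i+2$, to an induced $P_6$.

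The main obstacle is precisely this middle step. Unlike in \autoref{lem:F1}, the non-emptiness of $S(5,1,2)$ destroys the clean ``four homogeneous pairs of cliques'' picture, so one must carry out the extended case analysis — using all of $C_4$-, $C_6$-, $F_1$- and $P_6$-freeness, and properties \ref{item:p10}--\ref{item:p15} — both to verify that $S(5,1,2)$ and its surrounding sets really amalgamate into one homogeneous triple with a nice $6$-expression, and to check the hypotheses of \autoref{lem:recursion} (in particular condition \ref{item:P4}) for the pieces that involve the non-clique sets $S(2),S(5)$. This is exactly the point the paper flags as needing new structural work beyond \cite{GH17}.
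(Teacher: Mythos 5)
Your proposal is a plan rather than a proof: the entire content of \autoref{lem:F2} lies in the structural case analysis that you explicitly defer (``one must carry out the extended case analysis \dots to verify that $S(5,1,2)$ and its surrounding sets really amalgamate into one homogeneous triple \dots and to check the hypotheses of \autoref{lem:recursion}''). That analysis is exactly what the paper supplies, and without it nothing certifies the announced decomposition into $|V_0|=5$, three homogeneous pairs of cliques and one homogeneous triple. Moreover, where you do commit to specifics, several claims are unjustified or contradicted by the actual structure. First, you dispatch the case $S_5\neq\emptyset$ ``as in \autoref{lem:F1}'' by invoking a blow-up-of-a-bounded-graph result from~\cite{GH17}; that result is specific to the $F_1$ configuration (the graph $F_3$) and is not available here. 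In the correct argument $S_5$ cannot be discarded up front: one first proves that $X$ and $Y$ (the two parts of $S(5)$) are complete to $S_5$, so that every vertex of $S_5$ is universal and hence $S_5=\emptyset$ because $G$ is strong --- i.e.\ eliminating $S_5$ is a \emph{consequence} of the later analysis, not a preliminary reduction. Second, your picture of which sets survive is wrong: in the $F_2$ configuration one shows $S(3,4)=S(1,2)=S(5,1)=\emptyset$ (e.g.\ $z\in S(3,4)$ is adjacent to $x$ and $y$ by \ref{s22} but not to $t$ by \ref{s23}, giving a $C_4$; $z\in S(1,2)$ gives a $C_6$ through $y$), yet you plan to split $S(3,4)$ and to keep the ``arms'' $S(5,1),S(1,2)$. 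Third, the claim that a homogeneous triple is forced because $S(5,1,2)\neq\emptyset$ is false: one shows $S(5,1,2)$ is complete to $S_2$, splits it by its behaviour on $Y$ into two homogeneous cliques, and concludes $|S(5,1,2)|\le 2$, so it simply goes into $V_0$; no homogeneous triple occurs in this lemma (triples are needed only later, for \autoref{lem:C5}).

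For comparison, the actual decomposition is quite different from the one you announce: after showing $S(4,5)=\{x\}$, $|S(5,1,2)|\le 2$, $|S''(3,4,5)|\le 1$, $|S(2,3,4)|\le 1$ and $S_5=\emptyset$, the vertex set splits into $V_0=V(C)\cup S(5,1,2)\cup S''(3,4,5)\cup S(2,3,4)\cup\{x\}$ of size at most $10$ and just two homogeneous pairs of \emph{sets} (not cliques), namely $(X,S(2,3))$ and $(Y,S'(3,4,5))$ with $X,Y\subseteq S(5)$; each of these is given a nice $4$-expression via \autoref{lem:recursion} (checking condition~\ref{item:P4} by extending a putative $P_4$ to an induced $P_6$), yielding $\cw(G)\le 10+2\cdot 2=14$. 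So the bound $14$ arises as $10+4$, not as $5+6+3$. Your intuition to combine \autoref{lem:boundcw}, \autoref{lem:co-bipartite chain graph} and \autoref{lem:recursion} is the right framework, but as it stands the proposal both omits the decisive structural arguments and asserts a target decomposition that the structure of $(C_4,C_6,F_1,P_6)$-free strong atoms does not deliver.
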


\begin{proof}
Let $G$ be a $(C_4,C_6,F_1,P_6)$-free strong atom that contains an induced subgraph 
$H$ that is isomorphic to $F_2$ with $V(H)=\{1,2,3,4,5,t,x,y\}$ such that $1,2,3,4,5,1$
induces the \emph{underlying} 5-cycle $C$, and $t$ is adjacent to $5$, $1$ and $2$, $x$ is adjacent to $4,5$ and $y$
is adjacent to $2$ and $3$. Moreover, $t$ is adjacent to both $x$ and $y$, see \autoref{fig:F1F2}.
We partition $V(G)$ with respect to $C$. 
We choose $H$ such that $C$ has $|S_2|$ maximized.
Note that $x\in S(4,5)$, $y\in S(2,3)$ and $t\in S(5,1,2)$.

The overall strategy is to first decompose $G$ into a subset $V_0$ of constant size and constant number of homogeneous pairs
of sets, and then finish off the proof via \autoref{lem:boundcw}
by showing that each homogeneous pair of sets has a nice 4-expression using \autoref{lem:recursion}.

We start with the decomposition.
Since $S(2,3)$ and $S(4,5)$ are not empty, it follows from \ref{n12} that $S_1 = S(2)\cup S(5)$.
If both $S(2)$ and $S(5)$ are not empty, say $u\in S(2)$ and $v\in S(5)$, 
then $u,2,3,4,5,v$ induces either a $P_6$ or a $C_6$, depending on whether $u$
and $v$ are adjacent.
This shows that $S_1 = S(i)$ for some $i\in \{2,5\}$.
Now we argue that  $S_2=S(2,3)\cup S(4,5)$.
If $S(3,4)$ contains a vertex $z$, then $z$ is adjacent to $x$ and $y$ by \ref{s22}
but not adjacent to $t$ by \ref{s23}. 
This implies that  $t,x,z,y$ induces a $C_4$, So,  $S(3,4)=\emptyset$.
If $S(1,2)$ contains a vertex $z$,
then $z$ is adjacent to $y$ by \ref{s22} and so $1,z,y,3,4,5,1$ induces a $C_6$, a contradiction. This
shows that $S(1,2)=\emptyset$. By symmetry, $S(5,1)=\emptyset$.
Therefore, $S_2=S(2,3)\cup S(4,5)$. 
The following properties among subsets of $G$ were proved in \cite{GH17}.
\begin{enumerate}[label=\bfseries (\alph*)]
\item {Each vertex in $S(5,1,2)$ is either complete or anti-complete to $S_2$.} \label{item:s512}
\item {$S(2,3)$ and $S(4,5)$ are cliques.} \label{item:s2clique}
\item {Each vertex in $S(3,4,5)\cup S(4,5,1)$ is either complete or anti-complete to $S(4,5)$.
By symmetry, each vertex in $S(1,2,3)\cup S(2,3,4)$ is either complete or anti-complete to $S(2,3)$.}\label{item:s45}
\item $S(4,5)$ is anti-complete to $S(2,3,4)$. By symmetry, $S(2,3)$ is anti-complete to $S(3,4,5)$.\label{item:s234s45}
\item $S(1,2,3)$ is complete to $S(5,1,2)$. By symmetry, $S(5,1,2)$ is complete to $S(4,5,1)$.\label{item:s123s512}
\item {$S(4,5)$ is complete to $S(4,5,1)$. By symmetry, $S(2,3)$ is complete to $S(1,2,3)$. }\label{item:nbr of y}
\item $S(1,2,3)$ is complete to $S(2,3,4)$. By symmetry, $S(3,4,5)$ is complete to $S(4,5,1)$.\label{item:s123s234}
\item $S_5$ is complete to $S_2$.\label{item:s5s2}
\end{enumerate}
Recall that $S_1=S(i)$ for some $i\in \{2,5\}$. By symmetry, we may assume that $S_1=S(5)$.
Note that $S(5)$ is complete to $S(4,5,1)$ by \autoref {lem:P6C4atom} and anti-complete to $S(1,2,3)$
by \ref{s13}. It follows from \ref{s5},  \ref{s33}, \ref{s23},
\ref{item:s123s512}, \ref{item:nbr of y} and \ref{item:s123s234}  that
$S(i-1,i,i+1)\cup \{i\}$ is a homogeneous clique in $G$ and therefore $S(i-1,i,i+1)=\emptyset$ for $i=2,5$.
Similarly, $S(4,5)$ is a homogeneous clique in $G$ by \ref{s23}, \ref{item:s512}-\ref{item:s234s45}, \ref{item:nbr of y} and \ref{item:s5s2}
and so $S(4,5)=\{x\}$. Let  $T=\{t\in S(5,1,2): t \textrm{ is complete to } S_2\}$.
\begin{enumerate}[label=\bfseries (\arabic*)]
\emitem {$S(5)$ is anti-complete to $S(5,1,2)\setminus T$.}\label{item:S5T}%
Let $u\in S(5)$ and $t'\in S(5,1,2)\setminus T$. If $u$ and $t'$ are adjacent, 
then $u,t',2,3,4,x$ induces either a $P_6$ or a $C_6$, depending on whether $u$
and $x$ are adjacent. \e
\end{enumerate}

\noindent
By \ref{item:S5T} and \ref{item:s234s45}, $(S(5,1,2)\setminus T)\cup \{1\}$ is a homogeneous set in $G$
and so $S(5,1,2)\setminus T=\emptyset$.  In other words, $S(5,1,2)$ is complete to $S_2$.
We now partition $S(5)$ into
$X=\{v\in S(5): v \textrm{ has a neighbour in }S(2,3)\}$ and  $ Y=S(5)\setminus X$.
\begin{enumerate}[label=\bfseries (\arabic*)]
\setcounter{enumi}{1}
\emitem {$X$ is anti-complete to $S(3,4,5)$.}\label{item:XS345}%
Let $v\in X$ and $s\in S(3,4,5)$ be adjacent. By the definition of $X$,
$v$ has a neighbour $y'\in S(2,3)$. By \ref{item:s234s45}, $y'$ is not 
adjacent to $s$ and so $v,y',3,s$ induces a $C_4$. \e

\emitem {$X$ is complete to $S(5,1,2)$.}\label{item:XT}%
Assume, by contradiction, that $v\in X$ and $t'\in T$ are not adjacent. 
By the definition of $X$, $v$ has a neighbour $y'\in S(2,3)$. Since $t'$
is adjacent to $y'$, it follows that $v,5,t',y'$ induces a $C_4$. \e

\emitem {$X$ is anti-complete to $Y$.}\label{item:XY}%
Suppose that $u\in X$ and $v\in Y$ are adjacent. Let $y'\in S(2,3)$ be a neighbour
of $u$. Note that $x$ is adjacent to neither $u$ nor $v$ by \ref{s12}.
But now $x,4,3,y',u,v$ induces a $P_6$. \e

\emitem {$X$ is complete to $S_5$.}\label{item:XS5}%
Suppose that $v\in X$ and $u\in S_5$ are not adjacent. Let $y'\in S(2,3)$ be a neighbour
of $v$. By \ref{item:s5s2}, $y'$ and $u$ are adjacent.  Then $u,5,v,y'$ induces a $C_4$.
\e
\end{enumerate}

\noindent
It follows from \ref{s5}-\ref{s23}, \ref{item:s512}-\ref{item:s234s45}, \ref{item:nbr of y}, \ref{item:s5s2} and
\ref{item:XS345}-\ref{item:XS5} that {\color{blue} $(X,S(2,3))$ is a homogeneous pair of sets in $G$.}
\begin{enumerate}[label=\bfseries (\arabic*)]
\setcounter{enumi}{5}

\emitem {For each connected component $A$ of $Y$, each vertex in $S(5,1,2) \cup S(3,4,5)$ is either complete or anti-complete to $A$.}
\label{item:S512Y}%
Let $A$ be an arbitrary connected component of $Y$. Suppose that $s\in S(5,1,2)\cup S(3,4,5)$ distinguishes an edge $aa'$ in $A$, say 
$s$ is adjacent to $a$ but not adjacent to $a'$.  we may assume by symmetry that  $s\in S(5,1,2)$.
Then $a',a,s,2,3,4$ induces a $P_6$, a contradiction. \e

\emitem {Each connected component of $Y$ has a neighbour in both $S(5,1,2)$ and $S(3,4,5)$.}\label{item:YS512S345}%
Suppose that a connected component $A$ of $Y$ does not have a neighbour in one of $S(5,1,2)$ and $S(3,4,5)$, say $S(5,1,2)$.
Then $S_5\cup S(3,4,5)\cup \{5\}$ is a clique cutset of $G$ by \ref{item:XY}.  \e

\emitem {Each connected component of $Y$ is a clique.}\label{item:Y}%
Let $A$ be an arbitrary connected component of $Y$. By \ref{item:YS512S345}, $A$ has a neighbour $s\in S(5,1,2)$
and $r\in S(3,4,5)$. Note that $s$ and $r$ are not adjacent. Moreover, $\{s,r\}$ is complete to $A$ by \ref {item:S512Y}.
Now \ref{item:Y} follows from the fact that $G$ is $C_4$-free. \e

\emitem {$Y$ is complete to $S_5$.}\label{item:YS5}%
Suppose, by contradiction, that $v\in Y$ and $u\in S_5$ are not adjacent. By \ref{item:YS512S345},
$v$ has a neighbour $s\in S(5,1,2)$ and $r\in S(3,4,5)$. Then $v,s,u,r$ induces a $C_4$. \e
\end{enumerate}

\noindent
It follows from \ref{s5}, \ref{item:s5s2}, \ref{item:XS5} and  \ref{item:YS5} that 
each vertex in $S_5$ is a universal vertex in $G$ and so $S_5=\emptyset$.
Let $S'(3,4,5)=\{s\in S(3,4,5): s \textrm{ has a neighbour in }Y\}$ and $S''(3,4,5)=S(3,4,5)\setminus S'(3,4,5)$. 
Note that $S''(3,4,5)$ is anti-complete to $Y$. We now show further properties of $Y$ and $S'(3,4,5)$.

\begin{enumerate}[label=\bfseries (\arabic*)]
\setcounter{enumi}{9}

\emitem {$S'(3,4,5)$ is complete to $S(2,3,4)$.}\label{item:S'345S234}%
Suppose, by contradiction, that $r'\in S'(3,4,5)$ is not adjacent to $s\in S(2,3,4)$.
By the definition of  $S'(3,4,5)$, $r$ has a neighbour $v\in Y$. Then $v,r,4,s,2,1$
induces a $P_6$, a contradiction. \e

\emitem {Each vertex in $S(5,1,2)$ is either complete or anti-complete to $Y$.}\label{item:S512Y2}%
Let $t'\in S(5,1,2)$ be an arbitrary vertex. Suppose that $t'$ has a neighbour $u\in Y$.
Let $A$ be the connected component of $Y$ containing $u$. Then $t'$ is complete to $A$ by
\ref{item:S512Y}. It remains to show that $t'$ is adjacent to each vertex $u'\in  Y\setminus A$.
By \ref{item:YS512S345}, $u$ has a neighbour $s\in S(3,4,5)$. 
Note that $C'=u,t,'y,3,s$ induces a $C_5$. Moreover, $x$ and $s$ are not adjacent
for otherwise $x,s,u,t'$ induces a $C_4$. This implies that $x$ is adjacent only to $t'$
on $C'$. On the other hand, $u'$ is not adjacent to any of $u$, $3$ and $y$. This implies
that $u'$ is adjacent to either $s$ or $t'$ by \autoref{lem:P6C4atom}. 
If $u'$ is not adjacent $t'$, then $u'$ is adjacent to $s$. This implies that $u',s,3,y,t',x$
induces a $P_6$ or $C_6$, depending on whether $u'$ and $x$ are adjacent. 
Therefore, $u'$ is adjacent to $t'$. Since $u'$ is an arbitrary vertex in $Y\setminus A$,
this proves  \ref{item:S512Y2}. \e

\emitem {$S'(3,4,5)$ is anti-complete to $x$.}\label{item:S'345x}%
Suppose not. Let $s\in S'(3,4,5)$ be adjacent to $x$. By definition, $s$ has a neighbour
$y'\in Y$.  Note that $x$ and $y'$ are not adjacent by \ref{s12}. By \ref{item:S512Y}
and	 \ref{item:YS512S345}, $y$ has a neighbour $t\in T=S(5,1,2)$. So, $t$ is adjacent to $x$.
But now $s,y',t,x$ induces a $C_4$, a contradiction. \e
\end{enumerate}

\noindent
It follows from \ref{s5}-\ref{s23}, \ref{item:s234s45}, \ref{item:XS345}, \ref{item:XY}, \ref {item:S'345S234},  \ref{item:S512Y2}
and \ref{item:S'345x}
that {\color{blue} $(Y,S'(3,4,5))$ is a homogeneous pair of sets in $G$.} 

Let $S'(5,1,2)=\{s\in S(5,1,2): s \textrm{ is complete to }Y\}$.
Then $S(5,1,2)\setminus S'(5,1,2)$ is anti-complete to $Y$ by \ref{item:S512Y2}.
It follows from \ref{item:XT} that both $S'(5,1,2)$ and $S(5,1,2)\setminus S'(5,1,2)$  are homogeneous cliques in $G$. 
So, $|S(5,1,2)|\le 2$.  
We now show that $|S''(3,4,5)|\le 1$ and $|S(2,3,4)|\le 1$.
First, we note that if $r\in S(3,4,5)\cap N(x)$, then $r$ is complete to $S(2,3,4)$
for otherwise any non-neighbour $s\in S(2,3,4)$ of $r$ would start an induced $P_6=s,3,r,x,t,1$.
By symmetry, each vertex in $S(2,3,4)\cap N(y)$ is complete to $S(3,4,5)$. 
By \ref{item:s45}, each vertex in $S(2,3,4)\cap N(y)$ is also complete to $S(2,3)$.
Thus, $(S''(3,4,5)\cap N(x))\cup \{4\}$ and $(S(2,3,4)\cap N(y))\cup \{3\}$ are homogeneous cliques in $G$.
So,  $S''(3,4,5)\cap N(x)=S(2,3,4)\cap N(y)=\emptyset$. Namely, $x$ and $y$ are anti-complete
to $S''(3,4,5)$ and $S(2,3,4)$, respectively. By \ref{item:s45}, it follows that $S(2,3)$ is anti-complete to $S(2,3,4)$.
Secondly, $S''(3,4,5)$ and $S(2,3,4)$ are anti-complete to each other. 
If $r\in S''(3,4,5)$ and $s\in S(2,3,4)$ are adjacent, then $x,5,r,s,2,y$ induces a $P_6$ by \ref{item:s234s45}
and the fact that $x$ and $y$ are not adjacent to $r$ and $s$, respectively. 
These two properties imply that each of $S''(3,4,5)$ and $S(2,3,4)$ is a homogeneous clique in $G$.
Hence, $|S''(3,4,5)|\le 1$ and $|S(2,3,4)|\le 1$. 
{\color{blue} Now $G$ is partitioned into a subset $V(C)\cup S(5,1,2)\cup S''(3,4,5)\cup S(2,3,4)\cup \{x\}$ 
of size at most~$10$ and two homogeneous pairs of sets $(X,S(2,3))$ and $(Y,S'(3,4,5))$.}

We now show that each of  $G[X\cup S(2,3)]$ and $G[Y\cup S'(3,4,5)]$ has a nice 4-expression.
First, note that no vertex in $S(1,2)$ can have two non-adjacent neighbours in $X$
since $G$ is $C_4$-free. If there is an induced $P_4=y',x_1,x_2,x_3$ such that $y'\in S(2,3)$ and $x_i\in X$,
then $x_3,x_2,x_1,y',3,4$ induces a $P_6$ in $G$. Now if $P=x_1,x_2,x_3,x_4$ is an induced $P_4$
in $G[X]$, any neighbour $y_1$ of $x_1$ is not adjacent to $x_3$ and $x_4$.
But then $P\cup \{y_1\}$ contains such a labelled $P_4$ in $G[X\cup S(2,3)]$. 
Therefore, $G[X\cup S(2,3)]$ with the partition $(X,S(2,3))$ satisfies all the conditions
of \autoref{lem:recursion} and so has a nice 4-expression.
Finally, note that each vertex in $S(3,4,5)$ can have neighbours in at most one connected component of $Y$
due to \ref{item:YS512S345}, \ref{item:S512Y2} and the fact that $G$ is $C_4$-free.
It then follows from \ref {item:S512Y}-\ref{item:Y} that 
$G[Y\cup S'(3,4,5)]$ with the partition $(Y,S'(3,4,5))$ satisfies all the condition in 
\autoref{lem:recursion} (where $A=S'(3,4,5)$ and $B=Y$) and so has a nice 4-expression.
By \autoref{lem:boundcw}, it follows that $\cw(G)\le 10+2\times 2=14$.
This completes our proof. 
\end{proof}

\begin{lemma}\label{lem:C5}
If a $(C_4,C_6,F_1,F_2,P_6)$-free strong atom $G$ contains an induced $C_5$, then $G$ has clique-width at most $18$.
\end{lemma}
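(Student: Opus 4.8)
The plan is to follow the template of \autoref{lem:F1} and \autoref{lem:F2}. Fix an induced $C_5$, say $C=1,2,3,4,5$ with indices modulo $5$, chosen so that $|S_2|$ is maximum among all induced $5$-cycles of $G$, and partition $V(G)$ with respect to $C$ via the sets $S(X)$. Since $G$ is an atom, \autoref{lem:P6C4atom} gives that $C$ is dominating, so $S_0=\emptyset$; and since $G$ is $C_4$-free, any vertex of $V(G)\setminus C$ adjacent to two cycle vertices at distance two would lie on a $C_4$, so $S(i,i+2)=\emptyset$, $S_4=\emptyset$, and the only non-empty $3$-sets are the ``intervals'' $S(i-1,i,i+1)$. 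Hence $V(G)=V(C)\cup S_1\cup S_2\cup S_3\cup S_5$, where $S_1=\bigcup_i S(i)$, $S_2=\bigcup_i S(i,i+1)$ and $S_3=\bigcup_i S(i-1,i,i+1)$.

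Next I would prune this partition using the extra forbidden subgraphs $C_6$, $F_1$, $F_2$, Properties~\ref{s5}--\ref{item:p15}, and the structural facts on $(C_4,P_6)$-free atoms from~\cite{GH17}, which here have to be re-derived and extended. The recurring mechanism, already used in \autoref{lem:F1} and \autoref{lem:F2}, is that whenever a set $S(X)$ together with a suitable vertex of $C$ forms a homogeneous clique it must be empty, since $G$ is a strong atom; and several other sets are forced to have at most one or two vertices. Absence of $F_1$ and $F_2$ is what collapses the ``middle layer'' $S_3$ and many of the $S(i,i+1)$; by Property~\ref{n22} only boundedly many of the latter can survive, and I would branch the analysis on which of them are non-empty and on whether $S_5=\emptyset$. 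In every configuration the aim is a partition of $V(G)$ into a set $V_0$ with $V(C)\subseteq V_0$ of constant size, together with $p$ homogeneous pairs and $t$ homogeneous triples of sets, satisfying $|V_0|+2p+3t\le 18$; I expect the ``pendant'' parts hanging off the connected components of the $S(i)$'s to be exactly what organises into these pairs and triples.

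For each homogeneous pair $(A,B)$ of the decomposition I would then check the four hypotheses of \autoref{lem:recursion} to obtain a nice $4$-expression, and handle each homogeneous triple with a nice $6$-expression. Conditions~\ref{item:clique1} and~\ref{item:c4} follow from $C_4$-freeness and the cliqueness facts of~\cite{GH17}; condition~\ref{item:P4} and $P_4$-freeness of $B$ are proved by the standard device of extending a putative induced $P_4$ along an edge of $C$ to an induced $P_6$, contradicting $P_6$-freeness (the component-control Properties~\ref{item:p10} and~\ref{item:p14} are what let one apply this uniformly). Plugging the resulting expressions into \autoref{lem:boundcw} gives $\cw(G)\le|V_0|+2p+3t\le 18$, which, together with \autoref{lem:F1}--\autoref{lem:F2} and the fact that a $(C_4,P_6)$-free strong atom with no induced $C_5$, $C_6$, $F_1$ or $F_2$ is chordal and hence a clique by Dirac's theorem, completes the proof of \autoref{thm:p6c4atom}.

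The main obstacle is the structural case analysis. Unlike in \autoref{lem:F2}, there is no apex vertex or triangle pinning down the neighbourhoods across $C$, so $S_1$, $S_2$, $S_3$ are a priori far less constrained, and considerable work---essentially a strengthening of the decomposition theorem of~\cite{GH17}---is needed to show that, after deleting homogeneous cliques, only a bounded number of vertices remain outside a constant number of homogeneous pairs and triples, each meeting the hypotheses of \autoref{lem:recursion}. Controlling the components of the various $S(i)$ and verifying the $P_4$-type conditions of \autoref{lem:recursion} for the attached pairs is the delicate part; once the decomposition is secured, the clique-width bookkeeping through \autoref{lem:boundcw} is routine.
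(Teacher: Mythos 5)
Your proposal is a plan, not a proof: it correctly names the paper's general machinery (partition around a $C_5$, kill homogeneous cliques using strongness, decompose into a constant-size $V_0$ plus homogeneous pairs/triples, feed these into \autoref{lem:recursion} and \autoref{lem:boundcw}), but the entire substance of the lemma is exactly the ``structural case analysis'' you defer. The actual argument has to establish, for instance, that $S(i-1,i,i+1)$ is complete to $S(i,i+1,i+2)$, that a vertex of $S(i)$ with no neighbour in $S(i-2,i+2)$ forces an induced $F_2$, that $S(i)$ is anti-complete to $S(i-2,i-1,i)\cup S(i,i+1,i+2)$, and the detailed behaviour of $S(i-2,i+2)$ (each vertex has a neighbour in an adjacent $3$-set, the set is a clique, etc.), and then to split into the cases $S_1=\emptyset$ and $S_1\neq\emptyset$; in the second case one must further dissect $S(1,2)$ into pieces $X,Y,Z$, split $S_5$ and $Z$ accordingly, and verify that what remains organises into at most four homogeneous pairs and one homogeneous triple alongside a $V_0$ of size at most $7$. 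None of this is supplied or even sketched at a level where one could check that the count $|V_0|+2p+3t\le 18$ comes out. Moreover, your extremal choice of the cycle (maximising $|S_2|$, copied from \autoref{lem:F1} and \autoref{lem:F2}) is not the choice that makes these claims work: here one needs to maximise $|S_5|$ and, subject to that, minimise $|S_3|$; the minimality of $|S_3|$ is what forces $S(i-1,i,i+1)$ complete to $S(i,i+1,i+2)$, and the maximality of $|S_5|$ is used repeatedly (e.g.\ to produce the $5$-vertex non-dominated configurations and the $F_2$'s). With your choice these arguments are unavailable.

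A second concrete gap is the treatment of the homogeneous triple. You say each triple is ``handled with a nice $6$-expression'', but \autoref{lem:recursion} only yields nice $4$-expressions for certain $C_4$-free pairs, and \autoref{lem:co-bipartite chain graph} only covers co-bipartite chain graphs; neither applies to the triple that actually arises, namely $(R, X_1\cup Z', S(4)\setminus N(X_0))$ sitting inside $G[S_5\cup(X\cup Z')\cup S(4)]$. The paper needs a separate, explicit layered construction (ordering $S_5$ by neighbourhoods in $X\cup Z'$, cutting $X\cup Z'$ and $S(4)$ into pieces $X_i,M_i$, proving the pieces are pairwise anti-complete and each piece satisfies \autoref{lem:recursion}, and then stitching the pieces together with a hand-built $6$-expression) to obtain the nice $6$-expression; without some such argument the appeal to \autoref{lem:boundcw} with $t=1$ is unjustified. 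So while your outline follows the paper's intended route, the proof as proposed is incomplete in both of the places where the real difficulty lies.
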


\begin{proof}

Let $C=1,2,3,4,5,1$ be an induced $C_5$ of $G$. 
We partition $V(G)\setminus C$ with respect to $C$.  
We choose $C$ such that 
\begin{itemize}
\item $|S_5|$ is maximized, and\\[-22pt]
\item $|S_3|$ is minimized subject to the above.
\end{itemize}

We first prove the following claim which makes use of the choice of $C$.

\begin{claim}\label{clm:s3}
For each $1\le i\le 5$,  $S(i-1,i,i+1)$ is complete to $S(i,i+1,i+2)$.
\end{claim}

\noindent {\bf Proof of \autoref{clm:s3}.}  
By symmetry, it suffices to prove the claim for $i=1$.
Suppose by contradiction that $S(5,1,2)$ is not complete to $S(1,2,3)$.
Then there exist vertices $s\in S(5,1,2)$ and $t\in S(1,2,3)$ that are not adjacent. 
Consider the induced 5-cycle $C'=C\setminus \{1\}\cup \{s\}$.
Note that $t$ is not a $3$-vertex with respect to $C'$. By the choice of $C$,
there must exist a vertex $r\in V(G)$ that is a $3$-vertex for $C'$ 
but not for $C$. By the definition of $3$-vertices, it follows that $r\in S(2,3)\cup S(4,5)$ and $r$ is adjacent to $s$.
Similarly, by considering the induced 5-cycle $C''=C\setminus \{2\}\cup \{t\}$ we conclude
that there exist a vertex $q\in S(1,5)\cup S(3,4)$ that is adjacent to $t$.
Note that $r$ is not adjacent to $t$ for otherwise $r,t,1,s$ induces a $C_4$.
By symmetry, $q$ is not adjacent to $s$. If $r\in S(2,3)$, then
$4,q,t,1,s,r$ induces a $P_6$ if $q\in S(3,4)$ and $3,r,s,5,q,t$ induces a $C_6$ if $q\in S(5,1)$.
This shows that $r\in S(4,5)$. By symmetry, $q\in S(3,4)$. But now 
$5,1,2,3,r,q$ induces a $C_6$ since $r$ and $q$ are adjacent by \ref{s22}.
This contradicts the assumption that $G$ is $C_6$-free. \e

\begin{claim}\label{cla:S1S34}
If $S(i)$ contains a vertex that is anti-complete to $S(i-2,i+2)$, then $G$ contains an induced $F_2$.
\end{claim}

\noindent {\bf Proof of \autoref{cla:S1S34}.}  By symmetry, we may assume that $i=1$.
First note that $S(1)$ is anti-complete to
$S_1\setminus S(1)$ by \ref{s11} and the $C_6$-freeness of $G$. 
So,  the neighbours of the vertices in $S(1)$ are in $\{1\}\cup S(3,4)\cup S(4,5,1)\cup S(5,1,2)\cup S(1,2,3)\cup S_5$ 
by \ref{s12} and \ref{s13}.
Let $X\subseteq S(1)$ be the set of vertices that have a neighbour in $S(3,4)$, and $S'(1)=S(1)\setminus X$.
Note that $S'(1)\neq \emptyset$ due to our assumption.
Let $a\in S'(1)$ and $A$ be the connected component of $S'(1)$ containing $a$.
If $A$ has a neighbour in both $S(1,2,3)$ and $S(4,5,1)$, it follows from \ref{item:p10} that $G$ contains an induced $F_2$.
Therefore, we may assume that $A$ is anti-complete to $S(4,5,1)$. 
This implies that $N(A)\subseteq \{1\}\cup S(5,1,2)\cup S(1,2,3)\cup S_5 \cup X$.
Note that $S(5,1,2)$ is complete to $S(1,2,3)$ by \autoref{clm:s3}.
So, $\{1\}\cup S(1,2,3)\cup S(5,1,2)\cup S_5$ is a clique by \ref{s5}.
If $x\in X$ distinguishes an edge $aa'$ of $A$, say $x$ is adjacent to $a$
but not to $a'$, then $a',a,s,y,4,5$ induces a $P_6$, where $y\in S(3,4)$ is a neighbour of $x$.
So, each vertex in $X$ is either complete or anti-complete to $A$. Recall that each vertex in $S(1,2,3)$
is either complete or anti-complete to $A$ by \ref{item:p10}.
Let $X'$ and $S'(1,2,3)$ be the set of vertices in $X$ and $S(1,2,3)$ that are complete to $A$, respectively.
If $x'\in X'$ and $s'\in S'_(1,2,3)$ are not adjacent, let $y\in S(3,4)$ be a neighbour of $x'$.
Then either $s',a,x',y$ induces a $C_4$ or $s',a,x',y,4,5$ induces a $P_6$ depending on whether $s'$ and $y$ are adjacent. 
So, $X'$ and $S'(1,2,3)$ are complete.
Suppose that $X'$ contains two non-adjacent vertices $x_1$ and $x_2$.
Let $y_i\in S(3,4)$ be a neighbour of $x_i$ for $i=1,2$.
If $y_1=y_2$, then $y_1,x_1,1,x_2$ induces a $C_4$. So, $y_1\neq y_2$.
This means that $x_1$ is not adjacent to $y_2$ and $x_2$ is not adjacent to $y_1$.
By \ref{s12}, $y_1$ is adjacent to $y_2$ and so $y_1,x_1,a, x_2,y_2$ induces a $C_5$
not dominating $2$, which contradicts \autoref{lem:P6C4atom}.  This shows that $X'$
is a clique. We have proved that $N(A)\setminus S_5$ is a clique.
Since $G$ contains no clique cutset, $A$ must have a pair of non-adjacent neighbours
$u\in S_5$ and $x\in X'$. By the definition of $X'$, $x$ has a neighbour $y\in S(3,4)$.
Note that $u$ is not adjacent to $y$ by \ref{item:s5nodis}. We may choose $a\in A$
to be a neighbour of $u$. But now $\{1,2,3,4,a,x,y,u\}$
induces an $F_2$ (where the underlying 5-cycle is $1,2,3,y,x$). \e

\begin{claim}\label{cla:S1S451S123}
The set $S(i)$ is anti-complete to $S(i-2,i-1,i)\cup S(i,i+1,i+2)$.
\end{claim}

\noindent {\bf Proof of \autoref{cla:S1S451S123}.} By symmetry, we assume that $i=1$.
Suppose, by contradiction, that $x\in S(1)$ is adjacent to some vertex $s\in S(1,2,3)\cup S(4,5,1)$.
We may assume by symmetry that $s\in S(4,5,1)$. By \autoref{cla:S1S34}, $x$ has a neighbour $y\in S(3,4)$. 
Note that $C'=1,x,y,4,5$ induces a 5-cycle and $s$ is complete to $C'$. By the choice of $C$,
there exists $u\in S_5$ such that $u$ is not complete to $C'$. By \ref{item:s5nodis}, 
$u$ is anti-complete to $\{x,y\}$. However, this contradicts \ref{item:s25}. \e

\begin{claim}\label{clm:s2}
If $S(i-2,i+2)\neq \emptyset$ and is anti-complete to $S_1$, then
each vertex in $S(i-2,i+2)$ has a neighbour in $S(i-2,i-1,i)\cup S(i,i+1,i+2)$,
and $S(i-2,i+2)$ is a clique and it is anti-complete to exactly one of  $S(i-2,i-1,i)$ and $S(i,i+1,i+2)$.
\end{claim}

\noindent {\bf Proof of \autoref{clm:s2}.} By symmetry, assume that $S(3,4)\neq \emptyset$.
It follows from \ref{s22},\ref{n22} and the $C_6$-freeness of $G$ that
$S_2=S(3,4)\cup S(i,i+1)$ for some $i\in \{5,1\}$, and $S(3,4)$ is anti-complete  $S_2\setminus S(3,4)$.
We first show that each vertex in $S(3,4)$ has a neighbour in $S(4,5,1)\cup S(1,2,3)$.
Let $Y\subseteq S(3,4)$ be the set of vertices that have a neighbour in $S(4,5,1)\cup S(1,2,3)$,
and $S'(3,4)=S(3,4)\setminus  Y$. Suppose that the claim does not hold, namely $S'(3,4)\neq \emptyset$.
We shall show that $G$ contains a clique cutset and this is a contradiction.
Let $b\in S'(3,4)$ and $B$ be the connected component of $S'(3,4)$ containing $b$. 
First, if $y\in Y$ distinguishes an edge $bb'$ in $B$, say $y$ is adjacent to $b$ but not to $b'$,
then $b',b,y,s,1,2$ or $b',b,y,s,1,5$ induces a $P_6$ where $s\in S(1,2,3)\cup S(4,5,1)$
is a neighbour of $y$. So, any vertex $ Y$ is either complete or anti-complete to $B$.
Let $Y'$, $S'(2,3,4)$ and $S'(3,4,5)$ be the subsets of $Y$, $S(2,3,4)$
and $S(3,4,5)$, respectively that are complete to $B$.
Note that $N(B)=\{3,4\}\cup S'(2,3,4)\cup S'(3,4,5)\cup S_5\cup  Y'$ by \ref{s23}.
If $p\in S'(2,3,4)$ is not adjacent to $q\in S'(3,4,5)$, then $p,b,q,5,1,2,p$ induces a $C_6$,
a contradiction.  This shows that $\{3,4\}\cup S'(2,3,4)\cup S'(3,4,5)\cup S_5$ is a clique by \ref{s5}.

Now let $y_1,y_2\in  Y'$ be two arbitrary and distinct vertices. Suppose that
$y_1$ and $y_2$ are not adjacent. Let $x_i\in S(4,5,1)\cup S(1,2,3)$ be a neighbour of $y_i$
for $i=1,2$. If $x_1=x_2$, then $b,y_1,x_1,y_2$ induces a $C_4$.
So, $x_1\neq x_2$ and this implies that $x_1$ (resp. $x_2$) is not adjacent to $y_2$
(resp. $y_1$).
If $x_1$ and $x_2$ are in $S(4,5,1)$, then  $b,y_1,x_1,x_2,y_2$ induces a $C_5$
not dominating $2$, which contradicts \autoref{lem:P6C4atom}; 
if $x_1\in S(4,5,1)$ and $x_2\in S(1,2,3)$,
then $b,y_1,x_1,1,x_2,y_2,b$ induces a $C_6$, which contradicts our assumption.
So, $Y'$ is a clique. Moreover, $Y'$ is complete to $S'(2,3,4)\cup S'(3,4,5)$ by \ref{item:p14}
and to $S_5$ by \ref{item:s25}. Therefore, $N(B)$ is a clique. 
This proves the first statement of the claim.

Let  $X_2=\{y\in S(3,4): y \text{ has a neighbour in } S(1,2,3)\}$ and
$X_5=\{y\in S(3,4): y \text{ has a neighbour in } S(4,5,1)\}$.
The by \ref{item:p13}, $X_2$ and $X_5$ form a partition of $S(3,4)$, 
and $X_2$ is anti-complete to $S(4,5,1)$ and $X_5$ is anti-complete to $S(1,2,3)$.
If $y\in X_2$ and $z\in X_5$ are adjacent, let $t_2\in S(1,2,3)$ and $t_5\in S(4,5,1)$ be neighbours of $y$ and $z$, respectively.
Then $5,t_5,z,y,t_2,2$ induces a $P_6$, a contradiction. This shows that $X_2$ and $X_5$ are anti-complete to each other.
Suppose now that $x\in X_5$ and $y\in X_2$.  Then $x$ has a neighbour $t\in S(4,5,1)$
and $y$ has a neighbour $s\in S(1,2,3)$. Note also that $x$ and $y$ are not adjacent.
Now $(C\setminus \{5\})\cup \{x,y,s,t\}$ induces a $F_2$ (whose underlying 5-cycle is $y,s,1,t,4$). 
Therefore, we may assume by symmetry that  $X_5=\emptyset$.
In other words, every vertex in $S(3,4)$ has a neighbour in $S(1,2,3)$. 
It remains to show that $S(3,4)$ is a clique.
Suppose that $xs$ and $x's'$ induce a $2P_2$ in the bipartite graph between $S(1,2,3)$ and $S(3,4)$
where $x,x'\in S(3,4)$ and $s,s'\in S(1,2,3)$. Note that $s$ and $s'$ are adjacent since $S(1,2,3)$ is a clique.
Thus, $x$ and $x'$ are not adjacent for otherwise $x,s,s',x'$ induces a $C_4$. Now
$C'=s,x,4,x',s'$ induces a 5-cycle. By \ref{item:s25}, $\{x,x'\}$ is complete to $S_5$ and 
this implies that $S_5$ is complete to $C'$. Moreover, $3$ is complete to $C'$ and this contradicts
the choice of $C$.
Now we can order the vertices in $S(3,4)$ as $x_1,\ldots, x_r$
such that 
$$N_{S(1,2,3)}(x_1)\subseteq N_{S(1,2,3)}(x_2)\ldots \subseteq N_{S(1,2,3)}(x_r).$$
Recall that $x_1$ has a neighbour $t\in S(1,2,3)$ and thus $t$ is complete to $S(3,4)$.
This implies that $S(3,4)$ is a clique since $G$ is $C_4$-free. \e

\medskip
\noindent
We now consider two cases.

\noindent {\bf Case 1.} $S_1=\emptyset$.
By \autoref{clm:s2} and \ref{item:s25}, $S_5$ is complete to $S_2$
and so each vertex in $S_5$ is a universal vertex in $G$ by \ref{s5}.
Therefore, $S_5=\emptyset$.
Recall that  $S(i-1,i,i+1)$ and $S(i,i+1,i+2)$ are complete
for each $1\le i\le 5$ by \autoref{clm:s3}. 
Suppose first that $S_2=\emptyset$. 
Then $S(i-1,i,i+1)\cup \{i\}$ is a homogeneous
clique in $G$ by \ref{s5} and \ref{s33}. So, $S(i-1,i,i+1)=\emptyset$.
Now $G$ is the 5-cycle and has clique-width at most $4$.

Suppose now that $S_2\neq \emptyset$, say $S(3,4)\neq \emptyset$.
By \autoref{clm:s2}, we may assume that $S(3,4)$ is anti-complete to $S(4,5,1)$,
each vertex in $S(3,4)$ has a neighbour in $S(1,2,3)$,
and $(S(3,4),S(1,2,3))$ is a co-bipartite chain graph.
Let $x'\in S(3,4)$ and $s'\in S(1,2,3)$ be a neighbour of $x'$.
If $S(1,5)$ contains a vertex $y$, then either $2,s',x',4,5,y$ induces a $P_6$
or $C\cup \{x',y,s'\}$ induces a subgraph isomorphic to $F_2$, depending on whether $s'$ and $y$
are adjacent. This shows that $S(1,5)=\emptyset$. In addition, $S(2,3)=S(4,5)=\emptyset$ by
\ref{s22} and the $C_6$-freeness of $G$. So, $S_2=S(3,4)\cup S(1,2)$.
Suppose that $x\in S(3,4)$ and $t\in S(3,4,5)$ are not adjacent. 
Let $s\in S(1,2,3)$ be a neighbour of $x$. Then $C\cup \{x,s,t\}$
induces a subgraph isomorphic to $F_2$ (where the underlying 5-cycle is $x,s,1,5,4$). 
This shows that $S(3,4)$ is  complete to $S(3,4,5)$.

Note that $S(1,2)$ may not be empty.
If $S(1,2)\neq \emptyset$, then it is anti-complete to $S(4,5,1)$ by \ref{item:p15}
and the fact that $S(4,5,1)$ is anti-complete to $S(3,4)$.
It then follows from \autoref{clm:s2} that  each vertex in $S(1,2)$ has a neighbour in $S(2,3,4)$,
and $(S(1,2),S(2,3,4))$ is a co-bipartite chain graph. In addition, the above argument for $S(3,4)$
works symmetrically for $S(1,2)$. In particular, $S(1,2)$ is complete to $S(5,1,2)$.
This implies that $S(i-1,i,i+1)\cup \{i\}$ for each $i=1,4,5$ is a homogeneous clique in $G$ by \ref{s33} and \ref{s23}.
Therefore, $S(i-1,i,i+1)=\emptyset$ for $i=1,4,5$. 
Let $S'(2,3,4)$ be the set of vertices in $S(2,3,4)$ that are complete to $S(3,4)$
and $S''(2,3,4)=S(2,3,4)\setminus S(2,3,4)$. Similarly,  if $S(1,2)\neq \emptyset$,
let $S'(1,2,3)$ be the set of vertices in $S(1,2,3)$ that are complete to $S(1,2)$
and $S''(1,2,3)=S(1,2,3)\setminus S'(1,2,3)$. In case that $S(1,2)=\emptyset$,
we define $S'(1,2,3)=S(1,2,3)$ and $S''(1,2,3)=\emptyset$.
By \ref{item:p14} and the way we define $S'(1,2,3)$ and $S''(1,2,3)$,
it follows that $S''(2,3,4)$ and $S''(1,2,3)$ are anti-complete to $S(3,4)$ and $S(1,2)$, respectively.
Let $x\in S(3,4)$. If $s\in S''(2,3,4)$ is adjacent to some $y\in S(1,2)$, then
$x,3,s,y,1,5$ induces a $P_6$.
So, $S''(2,3,4)$ is anti-complete to $S(1,2)$. By symmetry,  $S''(1,2,3)$ is anti-complete to $S(3,4)$
(note that this is vacuously true if $S(1,2)=\emptyset$ since we define $S''(1,2,3)=\emptyset$).
This implies that $S''(2,3,4)$ and $S''(1,2,3)$ are homogeneous cliques in $G$ and so
both have size at most $1$. Furthermore, $(S'(2,3,4), S(1,2))$ and $(S'(1,2,3),S(3,4))$ are
homogeneous pairs of cliques in $G$. 
{\color{blue} Now $V(G)$ is partitioned into a subset $C\cup S''(2,3,4)\cup S''(1,2,3)$
of size at most $7$ and two homogeneous pairs of cliques} each of which has a nice 4-expression 
by \autoref{lem:co-bipartite chain graph}. Therefore, $\cw(G)\le 7+2\times 2=11$ by \autoref{lem:boundcw}.
This completes the proof of Case 1.

\medskip
\noindent {\bf Case 2.} $S_1\neq \emptyset$. 
By symmetry, we assume that $S(4)\neq \emptyset$.
It follows from \autoref{cla:S1S34} and \autoref{cla:S1S451S123} that each vertex in $S(4)$ has a neighbour in $S(1,2)$
and $S(4)$ is anti-complete to $S(2,3,4)\cup S(4,5,1)$. 
In particular, $S(1,2)$ contains a vertex that has a neighbour in $S(4)$.
By \ref{s12}, this vertex is universal in $S(1,2)$ and so $S(1,2)$ is connected.
Note that $S_1=S(4)$  by \ref{s11}, \ref{n12} and the $C_6$-freeness of $G$,
and $S_2=S(1,2)\cup S(i,i+1)$ for some $i=3,4$ by \ref{s22}, \ref{n22} and the $C_6$-freeness of $G$.
By symmetry, we can assume that $S_2=S(1,2)\cup S(3,4)$.
Moreover,  $S(i-1,i,i+1)$ is complete to $S(i,i+1,i+2)$ for each $1\le i\le 5$
by \autoref{clm:s3}.
If some vertex $x\in S(3,4)$ has a neighbour $s\in S(4,5,1)$, then $s$ is complete to $S(1,2)$ by \ref{item:p15}.
Let $v\in S(4)$ and $y\in S(1,2)$ be a neighbour of $v$. Then $4,v,y,s$ induces a $C_4$.
This shows that $S(3,4)$ is anti-complete to $S(4,5,1)$. 
Recall that $S(3,4)$ is anti-complete to $S(4)$ by \ref{s12}.
Thus, it follows from \autoref{clm:s2} that each vertex in $S(3,4)$ has a neighbour in $S(1,2,3)$
and $(S(3,4), S(1,2,3))$ is a co-bipartite chain graph if $S(3,4)\neq \emptyset$.

Let $S'(3,4,5)$ be the set of vertices in $S(3,4,5)$
that are complete to $S(3,4)$ and $S''(3,4,5)=S(3,4,5)\setminus S'(3,4,5)$.
Then $S''(3,4,5)$ is anti-complete to $S(3,4)$ by the fact that $S(3,4)$ is a clique and \ref{item:p14}.
Then $S'(3,4,5)\cup \{4\}$ and $S''(3,4,5)$ are homogeneous cliques in $G$
by \ref{s5}, \ref{s33},  \ref{s23} and \autoref{lem:P6C4atom}.
So, $S'(3,4,5)=\emptyset$ and $|S''(3,4,5)|\le 1$.
This implies that $|S(3,4,5)|\le 1$.

\begin{enumerate}[label=\bfseries (\arabic*)]

\emitem {$S(1,2,3)$ is complete to $S(1,2)$. By symmetry $S(5,1,2)$ is complete to $S(1,2)$. }\label{item:S123S12}%
Let $s\in S(1,2,3)$. Note that there is an edge $yz$ between $S(4)$ and $S(1,2)$
where $y\in S(1,2)$ and $z\in S(4)$. We note that $s$ and $y$ are adjacent for otherwise
$1,y,z,4,3,s$ induces a $C_6$. On the other hand, since $S(1,2)$ is connected,
it follows from \autoref{cla:S1S451S123} $s$ is complete to $S(1,2)$. \e
\end{enumerate}

By \ref{item:S123S12}, $S(5,1,2)\cup \{1\}$ is a homogeneous clique in $G$ and so $S(5,1,2)=\emptyset$.
Recall that $S(3,4)$ is a clique if it is not empty and so no vertex in $S(2,3,4)\cup S(3,4,5)$ can distinguish
two vertices in $S(3,4)$ by \ref{item:p14}. It then follows from \ref{s5}, \ref{s22}, \ref{s12}, \ref{s13}, \ref{s23}
and \ref{item:S123S12} that {\color{blue} $(S(1,2,3),S(3,4))$ is a homogeneous pair of cliques in $G$}
(if $S(3,4)=\emptyset$ this means that $S(1,2,3)$ is a homogeneous clique). 

\begin{enumerate}[label=\bfseries (\arabic*)]
\setcounter{enumi}{1}
\emitem {Each vertex in $S(1,2)$ is anti-complete to either $S(4)$ or $S(2,3,4)\cup S(4,5,1)$.}\label{item:S12antic}%
Suppose that $d\in S(1,2)$ has a neighbour $v\in S(4)$ and $s\in S(2,3,4)\cup S(4,5,1)$.
By \autoref{cla:S1S451S123}, $v$ and $s$ are not adjacent. But now $4,v,d,s$ induces a $C_4$, a contradiction. \e
\end{enumerate}

We let $X=\{v\in S(1,2): v \textrm{ has a neighbour in } S(4)\}$. We let
$Y=\{v\in S(1,2): v \textrm{ has a neighbour in }S(2,3,4)\cup S(4,5,1)\}$,
and $Z=S(1,2)\setminus (X\cup Y)$. By \ref{item:S12antic}, $X$, $Y$ and $Z$
form a partition of $S(1,2)$. Note that $X$ is anti-complete to $S(2,3,4)\cup S(4,5,1)$,
$Y$ is anti-complete to $S(4)$ and $Z$ is anti-complete to $S(2,3,4)\cup S(4,5,1)\cup S(4)$.
By \ref{s12}, each vertex in $X$ is universal in $S(1,2)$.
By \ref{item:s25}, $Y$ is complete to $S_5$. 
Suppose that $y\in Y$ is not adjacent to $z\in Z$. 
Let $s\in S(2,3,4)\cup S(4,5,1)$ be a neighbour of $y$.
By symmetry, we assume that $s\in S(2,3,4)$. 
Recall that $S(4)$ contains a vertex, say $v$. Then $z,1,y,s,4,v$ induces a $P_6$.
This shows that $Y$ is complete to $Z$.
Suppose that $Y$ contains a vertex $y'$ that has a neighbour $s\in S(4,5,1)$ and 
a vertex $y''$ that has a neighbour $t\in S(2,3,4)$. Then $s$ (resp. $t$) is not adjacent to
$y''$ (resp. $y'$) by \ref{item:p13}. Then $C\cup \{s,t,y',y''\}$ contains an induced $P_6$ or an induced $F_2$,
depending on whether $y'$ and $y''$ are adjacent. 
Thus, either each vertex in $Y$ is anti-complete to $S(4,5,1)$ and has a neighbour in $S(2,3,4)$
or each vertex in $Y$ is anti-complete to $S(2,3,4)$ and has a neighbour in $S(4,5,1)$.
Then $Y$ is a clique by a similar argument in \autoref{clm:s2} asserting that there is a vertex
in $S(2,3,4)$ or $S(4,5,1)$ that is complete to $Y$. 

Recall that $S(3,4)$ is anti-complete to $S(4,5,1)$.
Let $R\subseteq S(2,3,4)$ be the set of vertices that are complete to $S(3,4)$ and $T=S(2,3,4)\setminus R$.
By \ref{item:p10}, $T$ is anti-complete to $S(3,4)$.
If $Y$ is anti-complete to $S(2,3,4)$, then $R\cup \{3\}$ and $T$ are homogeneous cliques in $G$. Hence, $|S(2,3,4)|\le 1$.
Moreover,   {$(Y,S(4,5,1))$ is a homogeneous pair of cliques in $G$} by \ref {item:S123S12}.
If $Y$ is anti-complete to $S(4,5,1)$, then $S(4,5,1)\cup \{5\}$ is a homogeneous clique in $G$ and so $S(4,5,1)=\emptyset$. 
If $S(3,4)=\emptyset$, then $(Y,S(2,3,4))$ is a homogeneous pair of cliques in $G$. Otherwise let $d\in S(3,4)$.
We claim that $T$ is anti-complete to $Y$.
Suppose by contradiction that $y\in Y$ and $t\in T$ are adjacent.  Then $d$ and $t$ are not adjacent by the definition
of $T$. Note that $S(4)$ contain a vertex $z$ and $z$ has a neighbour $x\in X\subseteq S(1,2)$.
Note that $x$ and $y$ are adjacent and $y$ is not adjacent to $z$ and $t$ is not adjacent to $x$ by 
\ref{item:S12antic}. But now $d,3,t,y,x,z$ induces a $P_6$. So, $T$ is anti-complete to $Y$.
This implies that {$(Y,R)$ is a homogeneous pair of cliques in $G$}, and $T$ is a homogeneous clique in $G$ and so $|T|\le 1$.
In either case, 
{\color{blue} $Y\cup S(2,3,4)\cup S(4,5,1)$ is partitioned into a subset of size at most $1$ and a homogeneous pair of cliques in $G$.}

We next deal with $S_5\cup S(4)\cup X\cup Z$.
Let $S''_5=\{u\in S_5: u \textrm{ has a non-neighbour in }X\}$ and $S'_5=S_5\setminus S''_5$. 
Then $S'_5$ is complete to $X$. 

\begin{enumerate}[label=\bfseries (\arabic*)]
\setcounter{enumi}{2}
\emitem {Each vertex in $Z$ has a neighbour in $S''_5$.}\label{item:ZS''5}%
Let $Z'=\{z\in Z: z \textrm{ has a neighbour in }S''_5\}$ and $Z''=Z\setminus Z'$.
Suppose that $Z''$ contains a vertex $a$ and let $A$ be the connected component of $Z''$ containing $a$. 
Thus, $N(A)\subseteq \{1,2\}\cup X\cup Y\cup S(1,2,3)\cup S'_5\cup Z'$.
Recall that $Y$ is complete to $S_5$ and so $\{1,2\}\cup X\cup Y\cup S(1,2,3)\cup S'_5$ is a clique by \ref{item:S123S12}.
Moreover, $Y$ is complete to $Z$ and so $Z'$ is complete to $\{1,2\}\cup X\cup Y\cup S(1,2,3)$. 
We now show that $N(A)$ is a clique.
First, each vertex in $Z'$ is either complete or anti-complete $A$.
Suppose not. Let $z'\in Z'$ distinguish an edge $aa'$ in $A$, say $z'$ is adjacent to $a$ but not to $a'$.
Let $u\in S''_5$ be a neighbour of $z'$ and $x\in X$ be a non-neighbour of $u$. 
Then $x$ has a neighbour $v\in S(4)$. By \ref{item:s5nodis}, $u$ is not adjacent to $v$.
So, $a',a,z',u,4,v$ induces a $P_6$. Second, let $u\in S'_5$ and $z'\in Z'$ be in $N(A)$.
We may choose $a\in A$ to be a neighbour of $u$. By the definition of $Z'$, $z'$ has a neighbour $u''\in S''_5$.
Moreover, $u''$ is not adjacent to $a$ by the definition of $Z''$. Thus, $u$ and $z'$ must
be adjacent for otherwise $a,z',u'',u$ induces a $C_4$. This shows that $Z'$ and $S'_5$ are complete.
Thirdly, let $z_1,z_2\in N(A)\cap Z'$.
Suppose that $z_1$ and $z_2$ are not adjacent.  Let $u_i\in S''_5$ be a neighbour of $z_i$
and $x_i\in X$ be a non-neighbour of $u_i$. By the definition of $X$, $x_i$ has a neighbour 
$v_i\in S(4)$. By \ref{item:s5nodis}, $u_i$ is not adjacent to $v_i$.
Note that  $u_1\neq u_2$ for otherwise $a,z_1,u_1,z_2$ induces a $C_4$. This means
that $u_1$ (resp. $u_2$) is not adjacent to $z_2$ (resp. $z_1$). Now
$z_2,a,z_1,u_1,4,v_1$ induces a $P_6$. This shows that $N(A)$ is a clique
and so a clique cutset of $G$. Since $G$ has no clique cutsets, $Z''=\emptyset$.\e 

\emitem {$Z$ is complete to $S'_5$.}\label{item:ZS'5}%
Let $z\in Z$ be an arbitrary vertex. Then $z$ has a neighbour $u\in S''_5$
which has a non-neighbour $x\in X$. If $u'\in S'_5$
is not adjacent to $z$, then $u,u',x,z$ induces a $C_4$. \e
\end{enumerate}

Since $S'_5$ is complete to $X$ and each vertex in $S(4)$ has a neighbour in $X$,
$S'_5$ is complete to $S(4)$ by \ref{item:s5nodis}. So, \ref{item:ZS'5} implies that
each vertex in $S'_5$ is a universal vertex in $G$ (note that if $S(3,4)\neq \emptyset$ then it is also complete to $S_5$).
Therefore, $S'_5=\emptyset$ and $S_5=S''_5$.
We have shown that each vertex in $S_5$ has a non-neighbour in $X$ and each vertex
in $Z$ has a neighbour in $S_5$. We proceed with partitioning $X$, $S''_5$ and $Z$
into subsets so that we can decompose $G$ into homogeneous pairs of sets.
Let $X_0=\{x\in X: x \textrm{ is anti-complete to $S''_5$}\}$ and $X_1=X\setminus X_0$. 
We then partition $S''_5$ into two subsets  $S'''_5=\{u\in S''_5: u \textrm { is complete to }X_1\}$ and $R=S''_5\setminus S'''_5$.
Let $Z'=N(R)\cap Z$ and $Z''=Z\setminus Z'$.
\begin{enumerate}[label=\bfseries (\arabic*)]
\setcounter{enumi}{4}
\emitem {$N(X_0)\cap S(4)$ is anti-complete to $X_1$.}\label{item:X1}%
Suppose not. Let $v\in S(4)\cap N(X_0)$ have a neighbour $x_1\in X_1$.
Let $x_0\in X_0$ be a neighbour of $v$. By the definition of $X_1$, $x_1$
has a neighbour $u\in S''_5$. Note that $u$ is not adjacent to $x_0$ and so
not adjacent to $v$ by \ref{item:s5nodis}. But now $x_1,v,4,u$ induces a~$C_4$, a contradiction. \e
\end{enumerate}

Recall that $X$ is a clique. Therefore, $S(4)\setminus N(X_0)$ and $S(4)\cap N(X_0)$ are anti-complete
by \ref{item:X1} and the facts that $G$ is $C_4$-free and each vertex in $S(4)$ has a neighbour in $X$.
In addition, $S(4)\cap N(X_0)$ is anti-complete to $S''_5$ by \ref{item:s5nodis}. Therefore, 
{\color{blue} $(X_0,N(X_0)\cap S(4))$ is a homogeneous pair of sets in $G$.}

\begin{enumerate}[label=\bfseries (\arabic*)]
\setcounter{enumi}{5}

\emitem {No vertex in $S''_5$ can have two non-adjacent neighbours in $Z$.}\label{item:ZS5''}%
Let $u\in S''_5$ have two non-adjacent neighbours $z_1$ and $z_2$ in $Z$.
Then $u$ has a non-neighbour $x\in X$. Recall that $x$ is universal in $S(1,2)$
and so complete to $\{z_1,z_2\}$. Now $u,z_1,x,z_2$ induces a $C_4$. \e

\emitem {$Z'$ and $Z''$ are complete.}\label{item:Z'Z''}%
Suppose not.  Let $z'\in Z'$ and $z''\in Z''$ be non-adjacent.
By the definition of $Z'$, $z'$ has a neighbour $r\in R\subseteq S''_5$.
Let $u\in S'''_5$ be a neighbour of $z''$. By the definition of $R$, $r$
is not adjacent to some vertex $x_1\in X_1$. 
Since $X$ is complete to $Z$,
$x_1$ is adjacent to $z'$ and $z''$. Moreover, $x_1$ is adjacent to $u$
by the definition of $X_1$. 
By \ref{item:ZS5''}, $u$ is not adjacent to $z'$.
But now $x_1,z',r,u$ induces a $C_4$.
\e

\emitem {$Z'$ is complete to $S'''_5$.}\label{item:Z'S'''5}%
Suppose not. Let $z'\in Z'$ and $u\in S'''_5$ be non-adjacent. 
By the definition of $Z'$, $z'$ has a neighbour $r\in R$, and 
$r$ is not adjacent to some vertex $x_1\in X_1$. 
Since $X$ is complete to $Z$, $x_1$ is adjacent to $z'$.
Moreover, $u$ is adjacent to $x_1$ and $r$. So, $u,r,z',x_1$
induces a $C_4$, a contradiction.\e
\end{enumerate}

Since each vertex in $S(4)\setminus N(X_0)$ has a neighbour in $X_1$, 
$S'''_5$ is complete to $S(4)\setminus N(X_0)$ by \ref{item:s5nodis}.
It follows from  \ref{item:Z'Z''} and \ref{item:Z'S'''5} that
{\color{blue} $(S'''_5,Z'')$ is a homogeneous pair of sets in $G$.}
Moreover,  {\color{blue} $(R,X_1\cup Z', S(4)\setminus N(X_0))$ is a homogeneous triple in $G$}
by the structural properties we have proved so far.
Therefore,
{\color{blue} $V(G)$ is partitioned into a subset $V_0$ consisting of vertices in $C\cup S(3,4,5)$ and at most one vertex
in $Y\cup S(2,3,4)\cup S(4,5,1)$  (and so of size at most $7$),
four homogeneous pairs of sets $(S(1,2,3),S(3,4))$, one contained in $Y\cup S(2,3,4)\cup S(4,5,1)$,  
$(X_0,N(X_0)\cap S(4))$ and $(S'''_5,Z'')$, and  a homogeneous triple $R\cup (X_1\cup Z')\cup (S(4)\setminus N(X_0))$.}

We now show that 
each of the four homogenous pairs and the homogenous triple has 
a nice 4-expression and 6-expression, respectively.
First of all, $(S(1,2,3),S(3,4))$ and the homogeneous pair of sets contained in $Y\cup S(2,3,4)\cup S(4,5,1)$  
are homogeneous pairs of cliques and so have a nice 4-expression by \autoref{lem:co-bipartite chain graph}.

Recall that each vertex in $Z$ has a neighbour in $S''_5$.
Suppose that $P=u,z_1,z_2,z_3$ is an induced $P_4$ where $u\in S''_5$ and $z_1,z_2,z_3\in Z$.
Then $u$  has a non-neighbour $x\in X$, and $x$ ha a neighbour $v\in S(4)$.
By \ref{item:s5nodis}, $u$ and $v$ are not adjacent. Now $v,4,P$ induces a $P_6$. 
Suppose now that $P=z_1z_2z_3z_4$ be an induced $P_4$ in $Z$.
Then $z_1$ has a neighbour $u\in S''_5$ which has a non-neighbour $x\in X$.
By \ref{item:ZS5''}, $u$ is not adjacent to $z_3$ or $z_4$.
Now $P\cup \{u\}$ contains such a labeled $P_4$.
Therefore, $G[S''_5\cup Z]$ with the partition $(S''_5,Z)$ satisfies all the conditions
in \autoref{lem:recursion} and so has a nice 4-expression. 
Since $G[S'''_5\cup Z'']$ is an induced subgraph of $G[S''_5\cup Z]$, 
it follows that $G[S'''_5\cup Z'']$ has  a nice 4-expression. 

\begin{enumerate}[label=\bfseries (\arabic*)]
\setcounter{enumi}{8}
\emitem {$Z'$ is a clique.}\label{item:Z'clique}%
Suppose that $z_1,z_2\in Z'$ are not adjacent. 
Then $z_i$ has a neighbour $r_i\in R$ for $i=1,2$.
By \ref{item:ZS5''}, $r_1\neq r_2$ and $r_1$ (resp. $r_2$)
is not adjacent to $z_2$ (resp. $z_1$). 
Recall that $r_1$ has a non-neighbour $x\in X_1$. 
Since $x$ is adjacent to $z_1$ and $z_2$, $x$ is not adjacent to $r_2$.
Now by the definition of $X_1$, $x$ has a neighbour $u\in S''_5$.
Then $u$ is adjacent to $z_i$ for otherwise $x,z_i,r_i,u$ induces a $C_4$
for $i=1,2$. This, however, contradicts \ref{item:ZS5''}. \e
\end{enumerate}

\begin{claim}\label{clm:S5XS4}
$G[S_5\cup (X\cup Z')\cup S(4)]$ has a nice $6$-expression.
\end{claim}

\noindent {\bf Proof of \autoref{clm:S5XS4}.}
Let $X'=X\cup Z'$. By \ref{item:Z'clique}, $(S_5,X')$ is a co-bipartite chain graph. 
Thus, we can order the vertices in $S_5$ as $ u_1,\ldots,u_r$ and the vertices in $X'$
as $x_1,\ldots,x_s$ such that $N_{X'}(u_i)=\{x_1,\ldots, x_j\}$ for some $0\le j\le s$
and $N_{X'}(u_1)\subseteq N_{X'}(u_1)\subseteq \ldots \subseteq N_{X'}(u_r)$.
Let $U_0$ be the set of vertices in $S_5$ that are anti-complete to $X'$.
Let $U_1$ be the set of vertices in $S_5\setminus U_0$ that have the smallest neighbourhood in $X'$, 
and $U_2$ be the set of vertices in $S_5\setminus U_0$ that have the second smallest neighbourhood in $X'$, and so on.  
Then $S_5$ is partitioned into $U_0,U_1,\ldots,U_q$ where $q+1$ is the number of different neighbourhoods of vertices in $S_5$.
Let $X_1=N_{X'}(U_1)$ and $X_i=N_{X'}(U_{i})\setminus N_{X'}(U_{i-1})$
for $2\le i\le q$. Let $X_{q+1}$ be the set of vertices in $X'$ that are anti-complete to $S_5$.
Note that $X_1,\ldots X_q,X_{q+1}$ partition $X'$. 
Let $M_i=N(X_i)\cap S(4)$ for each $1\le i\le q+1$.
Since $S(4)$ is anti-complete to $Z$, some $M_i$ may be empty.
Since each vertex in $S(4)$ has a neighbour in $X$, $S(4)=M_1\cup \ldots \cup M_{q+1}$.
We say that $G[ X_j\cup M_j]$ is a \emph{piece} for each $1\le i\le q+1$.
Note that any vertex in $M_j$ has a neighbour in $X_j\cap X$ since $Z$ is anti-complete to $S(4)$.
\begin{enumerate}[label=\bfseries (\roman*)]
\emitem {$U_0$ is anti-complete to $X'\cup S(4)$.}\label{item:U0}%
By the definition of $U_0$, $U_0$ is anti-complete to $X'$. 
Since any vertex in $S(4)$  has a neighbour in $X$,  $U_0$ is anti-complete to $S(4)$ by \ref{item:s5nodis}. \e

\emitem {For $1\le i\le q$, $U_i$ is complete to $X_j\cup M_j$ for $1\le j\le i$.}\label{item:ucomp}%
Let $u\in U_i$ and $1\le j\le i$.  Then $u$ is complete to $X_j$ by the definition of $X_j$.
If $M_j$ is empty, then the proof is complete. So, we assume that $M_j\neq \emptyset$.
For each $v\in M_j$, $v$ has a neighbour in $X_j\cap X$. Since $u$ is complete to $X_j$,
$u$ is adjacent to $v$ by \ref{item:s5nodis}. This completes the proof. \e

\emitem {For each $1\le i\le q$, $U_i$ is anti-complete to $X_j\cup M_j$ for $i<j\le q+1$.}\label{item:uantic}%
Let $u\in U_i$ and let $j$ be such that $i<j\le q+1$. Then $u$ is anti-complete to $X_j$ by the definition of $X_j$.
If $M_j$ is empty, then the proof is complete. So, we assume that $M_j\neq \emptyset$.
For each $v\in M_j$, $v$ has a neighbour in $X_j\cap X$. Since $u$ is anti-complete to $X_j$,
$u$ is not adjacent to $v$ by \ref{item:s5nodis}. \e

\emitem {For $1\le i,j\le q+1$ with $i\neq j$,  $M_i\cap M_j=\emptyset$.}\label{item:disjoint}%
Suppose that $i<j$ and $v\in M_i\cap M_j$. 
Then $v$ has a neighbour $x_i\in X_i$ and a neighbour $x_j\in X_j$. Let $u_i\in U_i$. 
By \ref{item:ucomp}, $u_i$ is adjacent to $v$, and  by \ref{item:uantic} $u_i$ is not adjacent to $v$.
This is a contradiction. \e

\emitem {For $1\le i,j\le q+1$ with $i\neq j$,  $M_i$ and $M_j$ are anti-complete.}\label{item:antic}%
Suppose that $v_i\in M_i$ and $v_j\in M_j$ are adjacent. Then $v_i$ has a neighbour $x_i\in X_i$
and $v_j$ has a neighbour $x_j\in X_j$. By \ref{item:disjoint}, $v_i$ is not adjacent to $x_j$ and
$v_j$ is not adjacent to $x_i$ . Then $v_i,x_i,x_j,v_j$ induces a $C_4$. \e
\end{enumerate}

Since no vertex in $S(1,2)$ can have two non-adjacent neighbours in $S(4)$,
each piece $G[ X_i\cup M_i]$ with the partition $(X_i,M_i)$ satisfies all the conditions in \autoref{lem:recursion}
and so there is a nice $4$-expression $\tau_i$ for it
where all vertices in $X_i$ have label $2$ and  all vertices in $M_i$ have label $4$.
For $0\le i\le q$, let $\epsilon_i$ be a $2$-expression for $U_i$ in which all vertices in $U_i$ have label $5$.
We now construct a  nice $6$-expression for $G[S_5\cup (X\cup Z')\cup S(4)]$.
Let $\sigma_1=\rho_{5\rightarrow 6}(\eta_{5,4}(\eta_{5,2}(\epsilon_1\oplus \tau_1)))$.
For each $2\le i\le q$, let 
$$\sigma_i=\rho_{5\rightarrow 6}(\eta_{5,6}(\eta_{5,4}(\eta_{5,2}
(\epsilon_i\oplus \rho_{1\rightarrow 2}(\eta_{1,2}(\sigma_{i-1}\oplus \rho_{2\rightarrow 1}(\tau_i))))))).$$
Then
$\sigma=\eta_{5,6}(\epsilon_0\oplus (\rho_{1\rightarrow 2}(\eta_{1,2}(\sigma_q\oplus \rho_{2\rightarrow 1}(\tau_{q+1})))))$
is a nice $6$-expression for $G[S_5\cup (X\cup Z')\cup S(4)]$
(the correctness of the construction follows from \ref{item:U0}-\ref{item:antic}).
This completes the proof of the claim. \e

\medskip
Since $(X_0,N(X_0)\cap S(4))$ and $R\cup (X_1\cup Z')\cup (S(4)\setminus N(X_0))$
induce subgraphs of $G[S_5\cup (X\cup Z')\cup S(4)]$, the pair has a nice 4-expression and the triple has a nice 6-expression.
by \autoref{clm:S5XS4}. Finally, $\cw(G)\le |V_0|+2\times 4+3=18$.
This completes our proof of the lemma.
\end{proof}

We are now ready to prove our main theorem.

\begin{proof}[Proof of \autoref{thm:p6c4atom}]
Let $G$ be a $(C_4,P_6)$-free atom.
Let $G'$ be the graph obtained from $G$ by removing twin vertices and universal vertices.
It follows from \autoref{lem:F1}--\autoref{lem:C5} that if $G'$ contains an induced $C_5$ or $C_6$,
then $G'$ has clique-width at most 18. Therefore, we can assume that $G'$ is also $(C_5,C_6)$-free and
therefore $G'$ is chordal. It then follows from a well-known result of Dirac \cite{Di61} that $G'$ is a clique
whose clique-width is $2$.  Finally, $\cw(G)=\cw(G')$ by  \autoref{lem:prime} and this completes the proof.
\end{proof}

\section{The Hardness Result}\label{sec:hard}

In this section, we prove that \cn is NP-complete on the class of $(C_4,3P_3,P_3+P_6,2P_5,P_9)$-free graphs.
A graph is a {\it split graph} if its vertex set can be partitioned into two disjoint sets $C$ and $I$
such that $C$ is a clique and $I$ is an independent set. The pair $(C,I)$ is called a {\it split partition} of $G$. 
A split graph is {\it complete} if it has a {\it complete} split partition, 
that is, a partition $(C,I)$ such that $C$ and $I$ are complete to each other.
It is straightforward to check that a complete split graph is $P_4$-free.
A {\it list assignment} of a graph $G=(V,E)$ is a function $L$ that prescribes, for each $u\in V$, a finite list $L(u)\subseteq \{1, 2, \dots \}$ of colours for $u$.
The {\it size} of a list assignment ${L}$ is the maximum list size $|L(u)|$ over all vertices $u\in V$.
A colouring $c$ {\it respects} ${L}$ if  $c(u)\in L(u)$ for all $u\in V$.
The \lc problem is to decide whether a given graph $G$ has a colouring $c$ that respects a given list assignment~$L$.

For our hardness reduction we need the following result.
\begin{lemma}[\cite{GP14}]\label{l-gp}
\lc is \NP-complete even for complete split graphs with a list assignment of size at most~$3$.
\end{lemma}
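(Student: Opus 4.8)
The plan is to establish \autoref{l-gp} by a polynomial-time many-one reduction from $3$-\textsc{Satisfiability}. Given a CNF formula $\phi$ with variables $x_1,\dots,x_n$ and clauses $c_1,\dots,c_m$, I would introduce, for each variable $x_i$, two private colours $t_i$ (``true'') and $f_i$ (``false''), all $2n$ of them distinct. The instance of \lc consists of the complete split graph $G$ with clique $C=\{u_1,\dots,u_n\}$, independent set $I=\{w_1,\dots,w_m\}$, and all edges between $C$ and $I$; the list assignment is $L(u_i)=\{t_i,f_i\}$ for each variable vertex, and for each clause $c_j$ with literals $\ell_{j,1},\ell_{j,2},\ell_{j,3}$ the list $L(w_j)=\{a_{j,1},a_{j,2},a_{j,3}\}$, where $a_{j,k}=f_i$ if $\ell_{j,k}=x_i$ and $a_{j,k}=t_i$ if $\ell_{j,k}=\overline{x_i}$; thus $a_{j,k}$ is the colour encoding ``literal $\ell_{j,k}$ is false''. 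Every list has size at most~$3$, $G$ is a complete split graph, the construction is clearly polynomial, and \lc is in \NP, so it remains only to prove correctness.

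For correctness the key observation I would record first is a ``localization'' fact: since the colours $t_i,f_i$ are pairwise distinct and, among the lists of clique vertices, $t_i$ and $f_i$ occur only in $L(u_i)$, every list colouring $c$ of $G$ makes $C$ rainbow-coloured, and a colour $t_i$ (resp.\ $f_i$) appears on $C$ if and only if $c(u_i)=t_i$ (resp.\ $c(u_i)=f_i$); hence exactly one of $t_i,f_i$ appears on $C$. From $c$ I read off the assignment $x_i:=\mathrm{true}$ iff $c(u_i)=t_i$. Because $w_j$ is complete to $C$, a colour of $L(w_j)$ can legally be assigned to $w_j$ exactly when it does not appear on $C$; so $w_j$ is colourable iff some $a_{j,k}$ is not used on $C$ iff some literal $\ell_{j,k}$ is true under the assignment above. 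Both directions then follow: from a satisfying assignment, colour $u_i$ by $t_i$ or $f_i$ according to its truth value and colour each $w_j$ by the false-colour of one of its true literals; conversely, any colouring of $G$ respecting $L$ forces, through the vertices $w_j$, that every clause has a true literal, so $\phi$ is satisfiable.

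The main obstacle to anticipate --- and the reason the gadget looks the way it does --- is that in a complete split graph an independent-set vertex is adjacent to the \emph{whole} clique, so the constraint it encodes, namely that $L(w)$ is not contained in $\{\text{colours used on }C\}$, a priori involves the colours of all clique vertices simultaneously rather than a bounded local gadget, which is the usual workspace in colouring reductions. Giving each variable two private colours, each confined to a single clique vertex's list, is precisely what reduces ``colour $\alpha$ occurs on $C$'' to the choice made by that one vertex $u_i$, turning every $w_j$ into a clean ``not all three literals false'' clause gadget. A minor point to clean up is the effect of degenerate clauses: a repeated literal or a complementary pair of literals simply collapses $L(w_j)$ to size at most~$2$ (still correctly expressing the simplified clause) or makes the constraint at $w_j$ vacuous (matching a tautological clause), so no normalization of $\phi$ is needed and all lists still have size at most~$3$.
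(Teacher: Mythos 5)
Your proposal is correct. Note that the paper itself gives no proof of this lemma; it is imported verbatim from~\cite{GP14}, so there is nothing internal to compare against. Your reduction from $3$-\textsc{Satisfiability} is sound: the private colours $t_i,f_i$ make the clique $C$ trivially properly coloured and localize the constraint at each clause vertex $w_j$ exactly as you state, so $w_j$ can be coloured if and only if some literal of its clause is true under the assignment read off from $C$; the produced graph is a complete split graph (the clause vertices are complete to $C$ and pairwise non-adjacent), all lists have size at most~$3$, the construction is polynomial, and membership of \lc in \NP{} is immediate. Your handling of degenerate clauses is also fine. This is essentially the classical gadget used to show hardness of \lc on complete bipartite graphs (variable vertices with lists $\{t_i,f_i\}$ on one side, clause vertices with the three ``literal-is-false'' colours on the other), adapted by turning the variable side into a clique, which the disjointness of the colour pairs permits at no cost; this is the same spirit as the argument in the cited reference, so your proof can stand as a self-contained substitute for the citation.
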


We are now ready to prove our theorem. 
In its proof we construct a $C_4$-free graph~$G'$ that is neither $(sP_2+P_8)$-free nor $(sP_2+P_4+P_5)$-free for any $s\geq 0$.
Hence, a different construction is needed for tightening our hardness result (if possible).

\medskip
\noindent
\begin{theorem}
\cn is \NP-complete for $(C_4,3P_3,P_3+P_6,2P_5,P_9)$-free graphs.
\end{theorem}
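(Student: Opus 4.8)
The plan is to reduce from \lc restricted to complete split graphs with lists of size at most~$3$, which is \NP-complete by \autoref{l-gp}. Let $(G,L)$ be such an instance, where $G$ has split partition $(C,I)$ with $C$ a clique, $I$ an independent set, and $C$ complete to $I$, and where $|L(u)|\le 3$ for all $u\in V(G)$. Without loss of generality the palette of colours used across all lists is $\{1,\dots,k\}$ for some~$k$. First I would encode the colours as a clique: introduce vertices $w_1,\dots,w_k$ forming a clique~$W$, where $w_i$ represents colour~$i$. Then for each vertex $u\in V(G)$ I attach a gadget that forces $u$ to ``pick'' exactly one colour from $L(u)$ and to conflict with its graph-neighbours accordingly. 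The standard trick is: make $C\cup W$ into one big clique (so the $w_i$'s and the original clique~$C$ are mutually complete), give each $u\in V(G)$ a private neighbourhood among $W$ consisting of the $w_i$ with $i\notin L(u)$ (forcing $u$'s colour into $L(u)$ once we use exactly $k$ colours on $W$), and keep the original edges of~$G$. One must be slightly careful to actually \emph{force} $w_1,\dots,w_k$ to receive all $k$ distinct colours; this is achieved by the usual device of attaching to each $w_i$ a clique of $k-1$ further private vertices, or equivalently padding so that a $k$-colouring of the whole graph is forced to be ``rainbow'' on~$W$. The resulting graph $G'$ then has a $k$-colouring if and only if $(G,L)$ is a yes-instance of \lc.

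The bulk of the work is verifying that $G'$ is $(C_4,3P_3,P_3+P_6,2P_5,P_9)$-free. Here I would exploit heavily that $G'$ is built from a small number of cliques plus a ``bipartite-like'' part: $C\cup W$ (together with the private cliques hanging off each $w_i$) is close to a clique, and $I$ is an independent set each of whose vertices sees all of~$C$ and misses a bounded part of~$W$. For $C_4$-freeness: the only induced cycles one could hope to build must use at least two vertices of the independent set $I$ or of the pendant cliques, and the complete-join structure between $C$ and $I$ plus the specific attachment pattern rules out two non-adjacent vertices having two common non-adjacent neighbours — this needs a short case check on where the four vertices lie. For the path/linear-forest obstructions $3P_3$, $P_3+P_6$, $2P_5$, $P_9$: the key observation is that any induced subgraph of $G'$ contains at most a bounded number of vertices outside the big clique $C\cup W$ before it stops being $P_t$-free for small~$t$ — more precisely, $C\cup W$ being a clique means an induced path can pass through at most two of its vertices, so a long induced path (or a linear forest with several components) is forced to spend most of its vertices in $I$ and in the pendant cliques, and these sets are independent / tiny cliques with very restricted adjacency back to $C\cup W$, which caps the length. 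I would make this precise by arguing: in any induced linear forest $F$ of $G'$, $|V(F)\cap(C\cup W)|\le 2$ per connected component that meets it, each pendant-clique contributes $\le 2$ vertices, and each vertex of $I$ in $F$ has degree $\le 2$ in $F$ while being complete to $C$ — then a direct counting argument shows $F$ cannot contain $P_9$, $2P_5$, $P_3+P_6$, or $3P_3$ as the specific longest/most-spread configurations.

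The main obstacle I anticipate is precisely this forbidden-subgraph verification: getting the attachment pattern of the pendant cliques on the $w_i$ exactly right so that (a) the colour-forcing works, and (b) one does not accidentally create a $C_4$ (two pendant vertices on different $w_i$, together with two clique vertices) or a long induced path threading through several pendant cliques. I would resolve (a)–(b) tension by using, for each $w_i$, a set of $k-1$ pendant vertices that form a clique \emph{and} are complete to all of $C\cup W$ except they avoid being complete to each other's $w_j$ in a controlled way — or, more cleanly, by a slightly different forcing gadget (e.g.\ using a single auxiliary clique of size $k$ disjoint from everything, joined to $W$ so as to force rainbow colouring on $W$ without creating small induced paths). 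Either way, once the gadget is fixed, the adjacency structure is a bounded ``blow-up'' of a small graph joined with cliques, and the excluded-subgraph claims reduce to a finite check. Finally I would note, as the excerpt already flags, that the constructed $G'$ is not $(sP_2+P_8)$-free nor $(sP_2+P_4+P_5)$-free, so the value $t=9$ here is not claimed optimal; combined with \autoref{thm:p6c4} this leaves only $7\le t\le 8$ open for $(C_4,P_t)$-free \cn.
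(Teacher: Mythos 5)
Your starting point (a reduction from \lc on complete split graphs with lists of size at most $3$, via \autoref{l-gp}) is the same as the paper's, but the gadget you describe does not work. You make $C\cup W$ into a single clique and then ask whether $G'$ is $k$-colourable, where $k$ is the size of the colour palette. Since $W$ already has $k$ vertices, the clique $C\cup W$ has $|C|+k>k$ vertices whenever $C\neq\emptyset$, so $G'$ is never $k$-colourable, irrespective of whether $(G,L)$ is a yes-instance; the claimed equivalence fails. If instead you drop the join between $C$ and $W$ and keep the classical encoding (each $u$ adjacent to the $w_i$ with $i\notin L(u)$), then the colour forcing is fine --- and note that no rainbow-forcing pendant cliques are needed at all, since a $k$-clique automatically receives all $k$ colours up to renaming, so that ``main obstacle'' is a red herring --- but $C_4$-freeness is lost: take non-adjacent $u,u'\in I$, a vertex $c\in C$, and an index $i\notin L(u)\cup L(u')$ (such an $i$ exists whenever $k>|L(u)\cup L(u')|$, in particular for $k\geq 7$); then $u,c,u',w_i$ induces a $C_4$, because $uu'$ and $cw_i$ are non-edges. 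So the tension you sense between colour forcing and $C_4$-freeness is real, but neither variant you propose resolves it, and your suggested fixes (cliques of $k-1$ private vertices on each $w_i$, ``padding'', or an auxiliary clique joined to $W$) address the non-issue of rainbow forcing while the pendant cliques complete to $C\cup W$ would again push the clique number above $k$.

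The paper resolves exactly this conflict with a different selector gadget, and this is the missing idea: the colour clique $X$ is \emph{not} joined to $G$ at all; instead, for each $u\in V(G)$ one adds a private clique $Y_u$ of size $k-|L(u)|$ whose vertices are adjacent to $u$ and to the $x_i$ with $i\in L(u)$ (the listed colours, not their complement). In a $k$-colouring in which $X$ is rainbow, $Y_u$ must use precisely the colours outside $L(u)$, which forces $u$ into $L(u)$; and because every $y$-type vertex has exactly one neighbour in $V(G)$ and all connections between $X$ and $G$ pass through these private cliques, the potential $C_4$'s disappear and the exclusions of $3P_3$, $P_3+P_6$, $2P_5$ and $P_9$ can be verified by case analysis on vertex types. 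Your sketched verification of those exclusions would in any case need more care: the bound of at most two path vertices per clique is correct, but the coarse counting you outline is not carried out, and even in the correct construction induced paths on up to eight vertices do exist, so the argument has to track the precise $i$/$c$/$x$/$y$-type patterns rather than just count.
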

\begin{proof}
We reduce from \lc on complete split graphs with a list assignment of size at most~$3$.
This problem is \NP-complete due to \autoref{l-gp}.
Let~$G$ be a complete split graph with a list assignment~$L$ of size at most~3.  
From $(G,L)$ we construct an instance $(G',k)$ of \cn as follows. 
First, let $k\le 3|V(G)|$ be the size of the union of all lists $L(u)$.
Let $(C,I)$ be a complete split partition of $V(G)$. We say that the vertices of $C$ and $I$ are of $c$-type and $i$-type, respectively.
Let $G'$ be the graph of size $O(|V(G)| k)$ obtained from $G$ by adding the following sets of vertices and edges (see Figure~\ref{fig:hard} for an illustration):

\begin{itemize}
\item Take a clique $X$ on $k$ vertices $x_1,\ldots,x_k$. We say that these vertices are of $x$-type.
\item For each $u\in V(G)$, introduce a clique $Y_u$ of size $k-|L(u)|$ such that every vertex of $Y_u$ is adjacent to
$u$ and also to every $x_i$ with $i \in L(u)$ (so, each vertex in every $Y_u$ is adjacent to exactly one vertex of 
$V(G)$, namely vertex $u$). We say that the vertices in every $Y_u$ are of $y$-type.
\end{itemize}

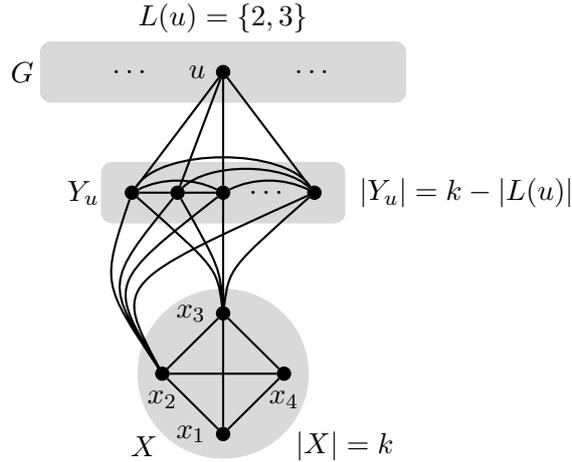
\begin{figure}[tb]
	\center
	\begin{tikzpicture}[scale=0.8]
	\tikzstyle{vertex}=[draw, circle, fill=black, minimum size=5pt, inner sep=0pt]
	\tikzstyle{set}=[draw,fill,black!15,rounded corners]
	
	\draw[set] (0,1) circle (1.4cm); \node at (-1.3,-0.2) {$X$}; \node at (2,-0.2) {$|X| = k$};
	\node[vertex,label=left:$x_1$] (x1) at (0,0) {};
	\node[vertex,label=below:$x_2$] (x2) at (-1,1) {};
	\node[vertex,label=left:$x_3$] (x3) at (0,2) {};
	\node[vertex,label=below:$x_4$] (x4) at (1,1) {};
	
	\draw[set] (-2,3.5) rectangle (2,4.5); \node at (-2.3,4) {$Y_u$}; \node at (4,4) {$|Y_u| = k-|L(u)|$};
	\node[vertex] (y1) at (-1.5,4) {};
	\node[vertex] (y2) at (-0.75,4) {};
	\node[vertex] (y3) at (0,4) {};
	\node[vertex] (y4) at (1.5,4) {};
	\node at (0.75,4) {$\dots$};
	
	
	\draw[set] (-3,5.5) rectangle (3,6.5); \node at (-3.3,6) {$G$};
	\node[vertex,label=left:$u$] (u) at (0,6) {};
	\node at (0,6.9) {$L(u)=\{2,3\}$};
	\node at (-1.5,6) {$\dots$};
	\node at (1.5,6) {$\dots$};
	
	
%
	
	\draw[thick] (x1)--(x2)--(x3)--(x4)--(x1)--(x3) (x2)--(x4);
	\draw[thick] (y1)--(y2)--(y3) .. controls +(0.5,0.25) and +(-0.5,0.25) .. (y4) (y1) .. controls +(0.5,0.25) and +(-0.5,0.25) .. (y3) (y2) .. controls +(0.5,0.5) and +(-0.5,0.5) .. (y4) (y1) .. controls +(0.5,0.75) and +(-0.5,0.75) .. (y4);
	\draw[thick] (u)--(y1) (u)--(y2) (u)--(y3) (u)--(y4);
	\draw[thick] (x2) .. controls +(-1,1.5) .. (y1) (x2) .. controls +(-0.95,1.5) .. (y2) (x2) .. controls +(-0.9,1.5) .. (y3) (x2) .. controls +(-0.8,1.5) .. (y4);
	\draw[thick] (x3) .. controls +(-0.1,0.75) .. (y1) (x3) .. controls +(-0.05,0.75) .. (y2) (x3) .. controls +(0,0.75) .. (y3) (x3) .. controls +(0.1,0.75) .. (y4);
	
	\end{tikzpicture}
	\caption{\label{fig:hard} Illustration of the NP-hardness construction for $k=4$.}
\end{figure} 

We now prove two claims that together with \NP-membership of {\sc Colouring} imply the theorem.

\begin{claimx}\label{cla:iff}
The graph $G$ has a colouring that respects $L$ if and only if $G'$ has a $k$-colouring.
\end{claimx}

\noindent {\bf Proof of \autoref{cla:iff}.}
First suppose that $G$ has a colouring $c$ that respects $L$. We give each~$x_i$ colour~$i$. Consider a clique $Y_u$.
By construction, every vertex in every $Y_u$ can only be assigned a colour from $\{1,\ldots,k\}\setminus L(u)$. 
As  $|Y_u|=k-|L(u)|$ and $Y_u$ is a clique, we need all of these colours. 
As every vertex in $Y_u$ has $u$ as its only neighbour in $G$ and $c(u)$ belongs to $L$, this is possible. 
Hence, we can extend~$c$ to a $k$-colouring~$c'$ of $G'$. 
Now suppose that $G'$ has a $k$-colouring~$c'$. As $X$ is a clique, 
we may assume without loss of generality that $c'(x_i)=i$ for $i=1,\ldots,k$. 
Then $c$ colours the vertices of each~$Y_u$ with colours from $\{1,\ldots,k\}\setminus L(u)$. 
As $Y_u$ is a clique of size $k-|L(u)|$, all these colours appear as a colour of a vertex in $Y_u$. 
This means that $u$ must get a colour from $L(u)$. Hence
 the restriction of $c'$ to $G$ yields a colouring~$c$ that respects $L$. 
This completes the proof of~\autoref{cla:iff}. \e

\begin{claimx}\label{cla:c4free}
The graph $G'$ is $(C_4,3P_3,P_3+P_6,2P_5,P_9)$-free.
\end{claimx}
\noindent {\bf Proof of \autoref{cla:c4free}.}
We first prove that $G'$ is $C_4$-free.
For contradiction, suppose that $G'$ contains an induced~$C_4$ on vertices $a_1,a_2,a_3,a_4$ in that order.
Then at least one of $a_1,a_2,a_3,a_4$, say $a_1$, is of $y$-type, since the $y$-type vertices separate $K$ from $G$, and both $K$ and $G$
are $C_4$-free. If $a_2$ and $a_4$ both belong to $X\cup Y_u$ or both $\{u\}\cup Y_u$, then they are adjacent, which is not possible. 
Hence, one of them, say $a_2$, belongs to $X$ and the other one, $a_4$, is equal to $u$. 
This is not possible either, as in that case $a_3$ must be of $y$-type and any two $y$-type neighbours of a $u$-type vertex are adjacent.  
We conclude that $G'$ is $C_4$-free.
 
We now prove that $G'$ is $3P_3$-free. For contradiction, suppose that $G'$ contains an induced~$3P_3$. 
At most one of the three connected components of the induced $3P_3$ can contain an $x$-type vertex. 
Then the other two connected components contain no $x$-type vertex. 
As the $y$-type neighbours of an $y$-type vertex form a clique together with their neighbour of $G$, 
both these other connected components contain at least two vertices from $G$. 
Then all these vertices must be of $i$-type, as $c$-type vertices are adjacent to every other vertex of $G$.
However, $i$-type vertices form an independent set and only share $c$-type vertices as common neighbours, a contradiction.
We conclude that $G'$ is $3P_3$-free.

We now prove that $G'$ is $(P_3+P_6)$-free. For contradiction, suppose that $G'$ contains an induced~$P_3+P_6$. Let $F_1$
be the $P_3$-component and $F_2$ be the $P_6$-component. Suppose $F_1$ contains an $x$-type vertex. 
Then $F_2$ contains no $x$-type vertex. As every  $\{u\}\cup Y_u$ is a clique and $i$-type vertices form an independent set, $F_2$
must contain at least one $c$-type vertex. As $c$-type vertices are adjacent to all $i$-type vertices and all other $c$-type vertices,
$F_2$ contains at most three vertices from $G$ which form a subpath of $F_2$. As $F_2$ contains six vertices and every
$\{u\}\cup Y_u$ is a clique, this is not possible. Hence $F_1$ contains no $x$-type vertex. As every $\{u\}\cup Y_u$ is a clique, 
this means that $F_1$ contains at least two adjacent vertices of $G$. As $i$-type vertices form an independent set, 
one of these vertices is of $c$-type. This means that $F_2$ contains no vertices of $G$. 
This is not possible as the $x$-type and $y$-type vertices induce a $P_5$-free graph.  We conclude that $G'$ is $(P_3+P_6)$-free.

We now prove that $G'$ is $2P_5$-free. For contradiction, suppose that $G'$ contains an induced~$2P_5$ with connected components
$F_1$ and $F_2$. At most one of $F_1$, $F_2$ may contain an $x$-type vertex. 
Hence we may assume that $F_2$ contains no $x$-vertex. This means that $F_2$ must be of the form $y-i-c-i-y$. 
As a consequence, $F_1$ only contains vertices of $x$-type or $y$-type. This is not possible as those vertices induce a $P_5$-free subgraph. 
We conclude that $G'$ is $2P_5$-free.

Finally we prove that $G'$ is $P_9$-free.
Let $P$ be a maximal induced path of $G'$. 
First  suppose that $P$ contains at least two $i$-type vertices. Then $P$ contains a subpath of the form $i-c-i$ or 
$i-y-x-x-y-i$.  We can extend $i-c-i$ to at most an 8-vertex path, which is of the form $y-i-c-i-y-x-x-y$, 
but we cannot extend $i-y-x-x-y-i$ any further. Now suppose that $P$ contains exactly one $i$-type vertex. 
If $P$ contains no $c$-type vertex, then
$P$ is a 5-vertex path of the form $i-y-k-k-y$. Otherwise $P$ contains exactly one $c$-type vertex. 
In that case $P$ is a 7-vertex path of the form $y-c-i-y-x-x-y$ or $y-x-x-y-c-i-y$. 
Now suppose that $P$ has no $i$-type vertex. If $P$ has no $c$-type vertex either,
then $P$ is of the form $y-x-y$ or $y-x-x-y$, so $P$ has at most five vertices. If $P$ has exactly one $c$-type vertex,
then $P$ is a 5-vertex path of the form $c-y-k-k-y$. Otherwise, $P$ has exactly two $c$-type vertices. 
In that case $P$ is a 7-vertex path of the form $y-c-c-y-k-k-y$. We conclude that $G'$ is $P_9$-free. \e

\medskip
\noindent
The result now follows from \autoref{cla:iff} and \autoref{cla:c4free}.
\end{proof}

\section{Conclusions}\label{s-con}

We proved that \cn restricted to $(C_4,P_t)$-free graphs is polynomial-time solvable for $t\leq 6$ and \NP-complete for $t\geq 9$.
Combined with the aforementioned known results from~\cite{BKM06,HJP14,KKTW01}, we can replace \autoref{t-5} by the following almost complete dichotomy for \cn restricted to $(C_s,P_t)$-free graphs.

\begin{theorem}\label{t-almostclass}
Let $s\geq 3$ and $t\geq 1$ be two fixed integers. 
Then {\sc Colouring} for $(C_s,P_t)$-free graphs 
is polynomial-time solvable if $s=3$, $t\leq 6$, or $s=4$, $t\leq 6$, or $s\geq 5$, $t\leq 4$,
and \NP-complete if $s=3$, $t\geq 22$, or $s=4$, $t\geq 9$, or $s\geq 5$, $t\geq 5$.
\end{theorem}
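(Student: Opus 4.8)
The plan is to assemble \autoref{t-almostclass} from the two new results of this paper together with the three known results recalled in the introduction, exploiting the elementary fact that forbidding a longer induced path is a strictly weaker restriction: if $t\le t'$ then every $P_t$-free graph is $P_{t'}$-free, so for a fixed $s$ the class of $(C_s,P_t)$-free graphs is contained in the class of $(C_s,P_6)$-free graphs whenever $t\le 6$, and contains the class of $(C_s,P_{t_0})$-free graphs whenever $t\ge t_0$. Since polynomial-time solvability of \cn is inherited by subclasses and \NP-completeness is inherited by superclasses, it suffices to place each parameter regime named in the statement inside one of the results at hand.

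For the tractable side I would argue as follows. When $s=4$ and $t\le 6$, the class $(C_4,P_t)$-free is contained in the class $(C_4,P_6)$-free, for which \cn is polynomial-time solvable by \autoref{thm:p6c4} (itself a consequence of the clique-width bound \autoref{thm:p6c4atom} together with \autoref{t-tarjan} and \autoref{thm:kr03}). When $s=3$ and $t\le 6$, the class $(C_3,P_t)$-free is contained in the class $(C_3,P_6)$-free, for which \cn is polynomial-time solvable by the result of Brandst{\"a}dt, Klembt and Mahfud~\cite{BKM06}. When $s\ge 5$ and $t\le 4$, polynomial-time solvability is exactly the easy half of \autoref{t-5} from~\cite{KKTW01}.

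For the hard side: when $s=4$ and $t\ge 9$, the graphs $G'$ produced by the reduction in \autoref{sec:hard} are $(C_4,3P_3,P_3+P_6,2P_5,P_9)$-free, hence in particular $(C_4,P_9)$-free and therefore $(C_4,P_t)$-free for every $t\ge 9$, so \NP-completeness of \cn transfers to all these classes. When $s=3$ and $t\ge 22$, the hard instances of Huang, Johnson and Paulusma~\cite{HJP14} witnessing \NP-completeness of \col{4}, and hence of \cn, on $(C_3,P_{22})$-free graphs are likewise $(C_3,P_t)$-free for every $t\ge 22$. When $s\ge 5$ and $t\ge 5$, \NP-completeness is the hard half of \autoref{t-5}. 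The regimes just listed exhaust precisely the parameter ranges claimed in the statement, leaving only the genuinely open windows $s=3$, $7\le t\le 21$ and $s=4$, $t\in\{7,8\}$ untouched, so the theorem follows.

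I do not expect a real obstacle in this final assembly: all the mathematical difficulty has already been discharged inside \autoref{thm:p6c4} and inside the reduction of \autoref{sec:hard}, and the remaining known ingredients are quoted verbatim. The one point that requires a little care is keeping the monotonicity directions straight, namely that forbidding $P_{t'}$ with $t'\ge t$ is a weaker condition than forbidding $P_t$, so that positive results propagate downward in $t$ and hardness results propagate upward in $t$; getting this backwards would misalign the cited results with the claimed ranges.
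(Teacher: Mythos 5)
Your proposal is correct and is essentially the paper's own argument: the theorem is obtained by combining the new results (\autoref{thm:p6c4} for the polynomial case $s=4$, $t\le 6$, and the $(C_4,P_9)$-free hardness construction of \autoref{sec:hard}) with the cited results of~\cite{BKM06}, \cite{HJP14} and \autoref{t-5} from~\cite{KKTW01}, using exactly the monotonicity observation that $(C_s,P_t)$-free classes are nested in $t$, so tractability propagates to smaller $t$ and \NP-hardness to larger $t$. No gap; the directions of the containments are handled correctly.
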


We proved that, in contrast to $(C_4,P_6)$-free graphs, $(C_4,P_6)$-free atoms have bounded clique-width. 
As we also showed that the classification of boundedness of clique-width of $H$-free graphs and $H$-free atoms coincides,
this result was not expected beforehand. As such,
we believe that a systematic study in the applicability of this technique, together with the other techniques developed in our paper, can be used to prove further polynomial-time results for \cn. For future work we aim to complete the classification of \autoref{t-almostclass}. 

The natural candidate class for a polynomial-time result of \cn is
the class of $(C_4,P_7)$-free graphs. However, this may require significant efforts for the following reason.  
Lozin and Malyshev~\cite{LM15} determined the complexity of {\sc Colouring} for ${\cal H}$-free graphs 
for every finite set of graphs~${\cal H}$ consisting only of 4-vertex graphs except when 
${\cal H}$ is $\{K_{1,3},4P_1\}$, $\{K_{1,3},2P_1+P_2\}$, $\{K_{1,3},2P_1+P_2,4P_1\}$ or $\{C_4,4P_1\}$.
Solving any of these open cases would be considered as a major advancement in the area.
Since $(C_4,4P_1)$-free graphs are $(C_4,P_7)$-free,
polynomial-time solvability of {\sc Colouring} on $(C_4,P_7)$-free graphs 
implies polynomial-time solvability for {\sc Colouring} on $(C_4,4P_1)$-free graphs.
As a first step, we aim to apply the techniques of this paper to $(C_4,4P_1)$-free graphs.

The class of $(C_3,P_7)$-free graphs is also a natural class to consider.
Interestingly, every $(C_3,P_7)$-free graph is 5-colourable. This follows from a result of Gravier, Ho\`ang and Maffray~\cite{GHM03} 
who proved that for any two integers $r,t\geq 1$, every $(K_r,P_t)$-free graph can be coloured with at most $(t-2)^{r-2}$ colours.
On the other hand,  $3$-{\sc Colouring} is polynomial-time solvable for $P_7$-free graphs~\cite{BCMSZ}. 
Hence, in order to solve {\sc Colouring} for $(C_3,P_7)$-free graphs 
we may instead consider $4$-{\sc Colouring} for $(C_3,P_7)$-free graphs. This problem also seems highly nontrivial.

\paragraph*{Acknowledgments.}
Initially we proved NP-hardness of {\sc Colouring} for $(C_4,P_{16})$-free graphs. 
Afterwards we were able to improve this result to $(C_4,P_9)$-free graphs via a simplification of our construction. 
We would like to thank an anonymous reviewer of the conference version of our paper for pointing out this simplification as well.
Serge Gaspers is the recipient of an Australian Research Council (ARC) Future Fellowship (FT140100048)
and acknowledges support under the ARC's Discovery Projects funding scheme (DP150101134).
Dani\"el Paulusma is supported by Leverhulme Trust Research Project Grant RPG-2016-258.

\appendix

\section{The Proof of Lemma~\ref{lem:co-bipartite chain graph}}\label{a-a}

Here is a proof for Lemma~\ref{lem:co-bipartite chain graph}, which we restate below.

\medskip
\noindent
{\bf Lemma~\ref{lem:co-bipartite chain graph} [Folklore].}
{\it There is a nice $4$-expression for any co-bipartite chain graph.}

\begin{proof}
Let $G=(A \uplus B,E)$ be a co-bipartite chain graph where $A$ and $B$ are cliques.
Since $G$ is a co-bipartite chain graph, we can order the vertices in $A$ as $a_0, a_1,\ldots,a_s$ 
and the vertices in $B$ as $b_1,\ldots,b_t$ such that for each $0\le i\le s$,
$N_B(a_i)=\{b_1,\ldots, b_j\}$ for some $0\le j\le t$
($j=0$ means that $N_B(a_i)=\emptyset$)
and $N_B(a_0)\subseteq N_B(a_1)\subseteq \ldots \subseteq N_B(a_t)$. 
Note that two vertices in $A$ are twins in $G$ if and only if they have the same neighbours in $B$.
It follows from \autoref{lem:prime} that twin vertices do not change the clique-width. Neither
do they change the niceness of the clique-width expression. Therefore,
we may assume that for each  $0\le j\le t$ there is at most one $a_i$ with $N_B(a_i)=\{b_1,\ldots, b_j\}$.
Moreover, we can assume that for each  $0\le j\le t$ there is exactly one $a_i$ with $N_B(a_i)=\{b_1,\ldots, b_j\}$
for otherwise $G$ would be an induced subgraph of this graph. In other words, $s=t+1$ and $N_B(a_i)=\{b_1,\ldots,b_i\}$
for each $0\le i\le t$.
Let $\tau_1=\rho_{3\rightarrow 4}(\rho_{1\rightarrow 2}(\eta_{1,3}(1(a_1)\oplus 3(b_1))))$.
For each $2\le i\le t$, note that
$$\tau_i=\rho_{1\rightarrow 2}(\eta_{1,4}(\rho_{3\rightarrow 4}(\eta_{3,4}(\eta_{1,2}((1(a_i)\oplus 3(b_i))\oplus \tau_{i-1})))))$$
is a nice $4$-expression for $G[\{a_1,\ldots,a_i,b_1,\ldots,b_i\}]$.
Now $\tau=\rho_{1\rightarrow 2}(\eta_{1,2}(1(a_0)\oplus \tau_t))$ is a nice $4$-expression for $G$.
\end{proof}

\end{document}